\newtheorem{theorem}{Theorem}[section]
\newtheorem{lemma}[theorem]{Lemma}
\newtheorem{corollary}[theorem]{Corollary}
\newtheorem{proposition}[theorem]{Proposition}
\newtheorem{definition}[theorem]{Definition}
\newtheorem{remark}[theorem]{Remark}
\newcommand{\filledbox}{\leavevmode
  \hbox to.77778em{%
  \hfil\vbox to.675em{\hrule width.6em height.6em}\hfil}}
\newcommand{\Rm}{{\mathbb R}}
\newcommand{\eps}{\varepsilon}
\begin{document}
\tabulinesep=1.0mm
\title{Weakly non-radiative radial solutions to 3D energy subcritical wave equations\footnote{MSC classes: 35L05, 35L71.}}

\author{Ruipeng Shen\\
Centre for Applied Mathematics\\
Tianjin University\\
Tianjin, China}

\maketitle

\begin{abstract}
 In this work we consider the energy subcritical 3D wave equation $\partial_t^2 u - \Delta u = \pm |u|^{p-1} u$ and discuss its (weakly) non-radiative solutions, i.e. the solutions defined in an exterior region $\{(x,t): |x|>|t|+R\}$ with $R\geq 0$ satisfying 
 \[
  \lim_{t\rightarrow \pm\infty} \int_{|x|>|t|+R} \left(|\nabla u(x,t)|^2 + |u_t(x,t)|^2\right) dx = 0.
 \]
It has been known that any radial weakly non-radiative solution to the linear wave equation is a multiple of $1/|x|$. In addition, any radial weakly non-radiative solutions $u$ to the energy critical wave equation must possess a similar asymptotic behaviour, i.e. $u(x,t)\simeq C/|x|$ when $|x|$ is large. In this work we give examples to show that radial weakly non-radiative solutions to energy subcritical equation ($3<p<5$) may possess a much different asymptotic behaviour. However, a radial weakly non-radiative solution $u$ with initial data in the critical Sobolev space $\dot{H}^{s_p}\times \dot{H}^{s_p-1}(\Rm^3)$ must coincide with a $C^2$ solution $W$ to the elliptic equation $-\Delta W = -|W|^{p-1} W$ so that $u(x,t) \equiv W(x) \simeq C/|x|$ when $|x|$ is large.  
\end{abstract}
\section{Introduction and Main Results}

\subsection{Background and topics}

The channel of energy plays an important role in the study of radial wave equation in recent years. This method is first considered in 3-dimensional case in Duyckaerts-Kenig-Merle \cite{tkm1} and then in 5-dimensional case in Kenig-Lawrie-Schlag \cite{channel5d}. Its application includes all of the following: the proof of solution resolution conjecture of energy critical wave equation with radial data in 3-dimensional case by Duyckaerts-Kenig-Merle \cite{se} and in odd dimensions $d\geq 5$ by the same authors \cite{oddhigh}; the scattering of radial, bounded solutions to 3-dimensional wave equations in energy supercritical case by Duyckaerts-Kenig-Merle \cite{dkm2} and in energy subcritical case by Shen \cite{shen2}, and many more. Let us make a brief review on the energy channel property of solutions to wave equations. 

\paragraph{Linear equation} Assume that the dimension $d\geq 3$ is odd. Let $u$ be a solution to the free wave equation (not necessarily radially symmetric):
\[ 
 \partial_t^2 u - \Delta u = 0, \quad (x,t) \in \Rm^d \times \Rm
\]
Then by Duyckaerts-Kenig-Merle \cite{dkmnonradial} we have
\[
  \lim_{t\rightarrow +\infty} \int_{|x|>|t|} |\nabla_{x,t} u(x,t)|^2 dx  + \lim_{t\rightarrow -\infty} \int_{|x|>|t|} |\nabla_{x,t} u(x,t)|^2 dx= \int_{\Rm^d} |\nabla_{x,t} u(x,0)|^2 dx. 
\]
Here $\nabla_{x,t} u = (\nabla_x u, u_t)$. Thus the only possible non-radiative solution, i.e. the free wave satisfying
\[
 \lim_{t\rightarrow \pm \infty} \int_{|x|>|t|} |\nabla_{x,t} u(x,t)|^2 dx = 0,
\]
must be zero solution. Weakly non-radiative solutions to free wave equation in the radial setting, i.e. radial free waves satisfying 
\begin{equation} \label{weakly non radiative temp}
 \lim_{t\rightarrow \pm \infty} \int_{|x|>|t|+R} |\nabla_{x,t} u(x,t)|^2 dx = 0
\end{equation} 
for a positive constant $R$ are also well-understood. Let us first introduce a few notations, which will be used through this work, before we give the results proved by Kenig et al. \cite{channel}. Fix $R \geq 0$. We define $\mathcal{H}_{R}$ to be the space consisting of the restrictions of all $\dot{H}^1(\Rm^d) \times L^2(\Rm^d)$ functions to the exterior region $\{x: |x|>R\}$, with the norm
\begin{align*}
 \|(u_0,u_1)\|_{\mathcal{H}_{R}} = \inf \left\{\|(u'_0, u'_1)\|_{\dot{H}^1(\Rm^d) \times L^2(\Rm^d)}: (u'_0(x), u'_1(x))=(u_0(x), u_1(x)), |x|>R\right\}.
\end{align*}
We also define $\mathcal{H}_{rad, R}$ to be the subspace of $\mathcal{H}_{R}$ consisting of radial functions. Because the choice 
\[
 (u'_0(x), u'_1(x)) = \left\{\begin{array}{ll}(u_0(x), u_1(x)), & |x|>R; \\ (u_0(R), 0), & |x|\leq R\end{array}\right.
\]
clearly minimize $\|(u'_0, u'_1)\|_{\dot{H}^1(\Rm^d) \times L^2(\Rm^d)}$ in the definition of $\mathcal{H}_R$ norm above if $(u_0,u_1)$ are radial, we know in the radial case $\|(u_0,u_1)\|_{\mathcal{H}_{rad,R}} = \|(\nabla u_0, u_1)\|_{L^2(\{x:|x|>R\})}$. Thus $\mathcal{H}_{rad,R}$ is a Hilbert space with pairing 
\[
 \langle (u_0,u_1), (\tilde{u}_0, \tilde{u}_1)\rangle_{\mathcal{H}_{rad, R}} = \int_{|x|>R} (\nabla u_0 (x)\cdot \nabla \tilde{u}_0(x) + u_1(x) \tilde{u}_1(x)) dx. 
\] 
It was proved in Kenig et al. \cite{channel} that a radial free wave satisfies \eqref{weakly non radiative temp} if and only if the restriction of its initial data to the exterior region $\{x: |x|>R\}$ is contained in a $(d-1)/2$-dimensional subspace $P(R)$ of $\mathcal{H}_{rad,R}$:
\begin{align*}
 P(R)  =\hbox{Span} \left\{(r^{2k_1-d}, 0), (0,r^{2k_2-d}): k_1,k_2 \in \mathbf{N}, k_1<\frac{d+2}{4}, k_2<\frac{d}{4}\right\}
\end{align*}
We use the notation $\Theta_k, k=1,2,\cdots,(d-1)/2$ for the generators of $P(R)$ given above. Here the lower index $k$ is assigned by the identity $\|\Theta_k\|_{\mathcal{H}_{rad,R}} = c_k/R^{k-1/2}$. In addition, for any radial free waves $u$ we have
\[
 \sum_{\pm} \lim_{t\rightarrow \pm \infty} \int_{|x|>|t|+R} |\nabla_{x,t} u(x,t)|^2 dx = \|\Pi_{P(R)^\perp} (u(\cdot,0), u_t(\cdot,0))\|_{\mathcal{H}_{rad,R}}^2. 
\]
Here $\Pi_{P(R)^\perp}$ is the orthogonal projection in $\mathcal{H}_{rad,R}$ on the orthogonal subspace $P(R)^\perp$. Please note that $P(R)$ is a one-dimensional space with generator $(1/r, 0)$ if $d=3$.
 
\paragraph{Energy critical nonlinear equation} The weakly non-radiative solutions to energy critical, focusing wave equation in all odd dimensions $d\geq 3$
\begin{equation} 
 \partial_t^2 u - \Delta u = + |u|^{\frac{4}{d-2}} u \label{focusing energy critical}
\end{equation} 
has also been discussed by Duyckaerts, Kenig and Merle. In summary we have (Please see \cite{se} for 3-dimensional case and \cite{oddnonradiative} for higher dimensional case $d \geq 5$)
 
\begin{theorem}
 Let $R_0>0$ and $u$ be a radial solution to \eqref{focusing energy critical} defined on the exterior region\footnote{Please see \cite{oddnonradiative} for the definition of a solution in an exterior region} $\{(x,t): |x|>R_0+|t|\}$ such that 
\[
 \lim_{t\rightarrow \pm \infty} \int_{|x|>|t|+R_0} |\nabla_{x,t} u(x,t)|^2 dx = 0. 
\]
Then there exists $R_1 \gg 1$, $l \in \Rm$ and a generator $\Theta_k$ of the spaces $P(R)$ given above, with $l \neq 0$ if $k<\frac{d-1}{2}$,  so that for all $t$ we have
\[
 \|(u(\cdot,t), u_t(\cdot,t)) - l\Theta_k \|_{\mathcal{H}_{rad,R}} \lesssim \max\left\{\frac{1}{R^{(k-\frac{1}{2})\frac{d+2}{d-2}}}, \frac{1}{R^{k+\frac{1}{2}}}\right\}, \quad \forall R>R_0+|t|. 
\]
In addition, if $k = \frac{d-1}{2}$(when $d=3$ this is always ture), then $u$ coincides with a stationary solution for large $r$. 
\end{theorem}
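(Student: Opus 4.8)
\medskip
\noindent\emph{Proof proposal.} The plan is to treat the focusing critical equation as a perturbation of the free wave equation on the exterior light cone and to feed the linear channel‑of‑energy identity quoted above into the weakly non‑radiative hypothesis. By time translation it suffices to prove the stated estimate at $t=0$ for all $R>R_0$, the general case following by replacing $u$ with $u(\cdot,\cdot+t)$. First I would fix $R$ large enough that $\rho(R):=\|(u_0,u_1)\|_{\mathcal{H}_{rad,R}}$ is as small as needed; this is legitimate because $\rho(R)\to 0$ as $R\to\infty$. On the region $\{|x|>|t|+R\}$ I would write $u=u_L+\Gamma$, where $u_L$ is the free evolution of an $\dot H^1\times L^2(\Rm^d)$ extension of $(u_0,u_1)\big|_{\{|x|>R\}}$ and $\Gamma=u-u_L$ solves $\partial_t^2\Gamma-\Delta\Gamma=|u|^{\frac{4}{d-2}}u$ with vanishing Cauchy data at $t=0$ inside $\{|x|>R\}$.

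\medskip
\noindent Next I would invoke the linear exterior estimate of Kenig et al., giving $\sum_\pm\lim_{t\to\pm\infty}\int_{|x|>|t|+R}|\nabla_{x,t}u_L|^2\,dx=\|\Pi_{P(R)^\perp}(u_0,u_1)\|_{\mathcal{H}_{rad,R}}^2$, whereas the weakly non‑radiative assumption (available since $R>R_0$) gives $\sum_\pm\lim_{t\to\pm\infty}\int_{|x|>|t|+R}|\nabla_{x,t}u|^2\,dx=0$. The triangle inequality for the $\mathcal{H}_{rad}$‑norm on $\{|x|>|t|+R\}$ then yields $\|\Pi_{P(R)^\perp}(u_0,u_1)\|_{\mathcal{H}_{rad,R}}\lesssim\sup_t\|(\Gamma(\cdot,t),\Gamma_t(\cdot,t))\|_{\mathcal{H}_{rad}(\{|x|>|t|+R\})}$, so the whole problem reduces to showing that the Duhamel correction $\Gamma$ is of strictly higher order in $1/R$ than the main term.

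\medskip
\noindent For that step I would use the radial pointwise bound $|u(x,t)|\lesssim\|(u_0,u_1)\|_{\mathcal{H}_{rad}(\{|x|>|t|+R\})}\,|x|^{-\frac{d-2}{2}}$ on the exterior cone, which makes $\big||u|^{\frac{4}{d-2}}u\big|\lesssim\rho(R)^{\frac{d+2}{d-2}}|x|^{-\frac{d+2}{2}}$ integrable along backward light cones; a Strichartz/Duhamel estimate adapted to exterior cones then gives $\sup_t\|(\Gamma,\Gamma_t)\|\lesssim\rho(R)^{\frac{d+2}{d-2}}$, hence $\|\Pi_{P(R)^\perp}(u_0,u_1)\|_{\mathcal{H}_{rad,R}}\lesssim\rho(R)^{\frac{d+2}{d-2}}\ll\rho(R)$. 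Since the generators $\Theta_j$, regarded as ($R$‑independent) functions on $\{|x|>R\}$, do not move with $R$, one checks there is a single formal combination $\sum_j l_j\Theta_j$ with $\|(u_0,u_1)-\sum_j l_j\Theta_j\|_{\mathcal{H}_{rad,R}}\ll\rho(R)$ for all large $R$; let $k$ be the smallest index with $l_k\neq0$ and set $l:=l_k$ (so $l\neq0$ automatically, in particular when $k<\frac{d-1}{2}$). A bootstrap that reinserts the sharpened bound $|u(x,t)|\lesssim|l|\,|x|^{2k-d}$ into the Duhamel estimate — treating the nearly harmonic profile $l\Theta_k$ as a stationary source and solving $-\Delta\Gamma\approx|l\Theta_k|^{\frac{4}{d-2}}l\Theta_k$ — while also tracking the next generator $\Theta_{k+1}$, produces the asserted error $\max\{R^{-(k-\frac12)\frac{d+2}{d-2}},R^{-(k+\frac12)}\}$ (the first exponent from the nonlinearity of $l\Theta_k$, the second being the size $\|\Theta_{k+1}\|_{\mathcal{H}_{rad,R}}$). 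Undoing the time translation gives the bound for all $t$ and all $R>R_0+|t|$. I expect this Duhamel estimate to be the main obstacle: controlling $|u|^{\frac{4}{d-2}}u$ needs pointwise decay of $u$ on the exterior cone, which is itself a consequence of the channel identity, so the argument must be organized as a bootstrap anchored at large $R$, and one must be careful because $\Gamma$ at a point is supported on the whole backward light cone through that point, not on the exterior region alone.

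\medskip
\noindent Finally, for $k=\frac{d-1}{2}$ — which is forced when $d=3$, there being only one generator — the top generator is $\Theta_{(d-1)/2}=(r^{2-d},0)$ and the previous step shows $(u_0,u_1)\big|_{\{|x|>R\}}$ equals $l\Theta_{(d-1)/2}$ up to an error of size $O(R^{-(k-\frac12)\frac{d+2}{d-2}})$ coming purely from the nonlinearity. I would then argue rigidity: combining the odd‑dimensional representation of radial waves with the nonlinear exterior energy estimate, one shows $\partial_t u\equiv0$ on $\{|x|>R_1\}$ for some $R_1\gg1$, so that $u(x,t)=W(x)$ there with $-\Delta W=+|W|^{\frac{4}{d-2}}W$ and $W(x)\to0$ as $|x|\to\infty$; the ODE analysis of this elliptic equation gives $W(r)=l\,r^{2-d}(1+o(1))$, i.e.\ $W$ agrees with $l\Theta_{(d-1)/2}$ to leading order. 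This rigidity step is the second delicate point, since it is the only place where one genuinely uses that $\Theta_{(d-1)/2}$ is the last generator.
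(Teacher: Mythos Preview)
This theorem is not proved in the present paper; it is quoted as background from Duyckaerts--Kenig--Merle \cite{se} (for $d=3$) and \cite{oddnonradiative} (for odd $d\ge 5$), so there is no ``paper's own proof'' to compare against. Your outline is, in spirit, the DKM strategy: use the linear exterior channel identity to control $\Pi_{P(R)^\perp}(u_0,u_1)$, treat the nonlinearity as a perturbation that is small in the exterior region, extract a dominant coefficient along one generator $\Theta_k$, and bootstrap.

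That said, two points in your sketch would not survive as written. First, the coefficients $l_j$ you introduce are a priori functions of $R$ (they are the coordinates of $\Pi_{P(R)}(u_0,u_1)$ in the $R$-dependent basis $\Theta_j/\|\Theta_j\|_{\mathcal H_{rad,R}}$), and the heart of the DKM argument is precisely to show that the leading coefficient stabilises as $R\to\infty$; your phrase ``since the generators do not move with $R$, one checks there is a single formal combination'' hides the entire inductive mechanism (one proves $|l_k(R')-l_k(R)|$ is controlled for $R'\in[R,2R]$ and sums a geometric series). Second, in the $k=\tfrac{d-1}{2}$ case one does not show $\partial_t u\equiv 0$ directly; rather one first constructs the stationary solution $W$ with the correct leading asymptotics (by ODE analysis of $-\Delta W=|W|^{4/(d-2)}W$), and then proves $u\equiv W$ on a far exterior region by a uniqueness/matching argument for two weakly non-radiative solutions with the same $1/r$-type tail --- this is exactly what the present paper does in Section~4.2 (Lemma~\ref{match further} and Proposition~\ref{match two non radiative}) for the subcritical problem, and what \cite{se} does in the critical case. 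Your ``$\partial_t u\equiv 0$ by odd-dimensional representation'' shortcut is not how the argument runs.
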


\noindent It immediately follows that we have 
\begin{equation}
 \|\Pi_{P(R)^\perp} (u(\cdot,t), u_t(\cdot,t))\|_{\mathcal{H}_{rad,R}} \lesssim \max\left\{\frac{1}{R^{(k-\frac{1}{2})\frac{4}{d-2}}}, \frac{1}{R}\right\} \|\Pi_{P(R)} (u(\cdot,t), u_t(\cdot,t))\|_{\mathcal{H}_{rad,R}}. \label{fall in PR}
\end{equation} 
Namely the data of any weakly non-radiative solutions to \eqref{focusing energy critical} ``almost'' fall in the space $P(R)$ as $R\rightarrow +\infty$. 

\paragraph{Topics of this work} We consider energy subcritical wave equation in the 3-dimensional case 
\[
 \left\{\begin{array}{ll} \partial_t^2 u - \Delta u = \zeta |u|^{p-1}u; \\
 u(\cdot, 0) = u_0;  \\
 u_t (\cdot,0) = u_1.  \end{array}\right.\quad (CP1)
\]
Here $p \in (3,5)$ and $\zeta$ is either $-1$ (defocusing case) or $1$ (focusing case). We show that a weakly non-radiative solution to (CP1) does not necessarily satisfy an estimate similar to \eqref{fall in PR}. In the focusing case $u(x,t) = c_p |x|^{-2/(p-1)}$ is a stationary solution to (CP1) defined for all $(x,t) \in (\Rm^3\setminus\{0\}) \times \Rm$ with a suitable constant $c_p >0$. A simple calculation verifies that $u$ satisfies
\begin{equation}
 \lim_{t\rightarrow +\infty} \int_{|x|>|t|} |\nabla_{x,t} u(x,t)|^2 dx = 0. \label{non radiative temp}
\end{equation}
In addition, the angle between $(c_p r^{-2/(p-1)}, 0)$ and $(1/r,0)$, the single generator of $P(R)$, is a positive constant for all $R>0$. In the defocusing case we are able to give a more interesting example. We construct a solution $u \in C^2(\{(x,t): |x|>|t|\})$ satisfying \eqref{non radiative temp} so that its initial data are ORTHOGONAL to $(1/r,0)$ in $\mathcal{H}_{rad,R}$ for all $R>0$. 

\paragraph{Interpretation} The difference between energy critical and subcritical cases is not as surprising as at the first glance. In the energy critical case, when we consider the solution in an exterior region $\{(x,t): |x|>|t|+R\}$ with a large $R>0$, the linear wave operator dominates the wave propagation of data because this part of solution coincides with a solution with a small energy. In the energy subcritical case, although we still know that the solution in an exterior region $\{(x,t): |x|>|t|+R\}$ with a large $R>0$ carries a small amount of energy, this does not means that linear wave operator dominates the propagation, even in the defocusing case, because the energy space $\dot{H}^1 \times L^2$ is no longer the critical Sobolev space of this Cauchy problem.  
However, if we know the propagation is dominated by the linear part, we may still prove a similar result as in the energy critical case.  In this work we prove that if the initial data $(u_0,u_1)$ is in the critical Sobolev space $\dot{H}^{s_p}\times \dot{H}^{s_p-1}(\Rm^3)$, then any weakly non-radiative solution to (CP1) must coincide with a stationary solution in the exterior region $\{(x,t): |x|>|t|+R\}$. Before we may give the precise statement of our main theorems, we need to define exterior solutions to (CP1) in a suitable way. 

\subsection{Exterior and non-radiative solutions}

Fix $R_0 \geq 0$. Let $\mathcal{H}_{R_0}$ be the space defined in last subsection. We also define a space 
\[
 X(I) = L^{\frac{2p}{p-3}}(I; L^{2p}(\{x: |x|>|t|+R_0\}))
\]
for a time interval $I$. Given any $T>0$ and initial data $(u_0,u_1) \in \mathcal {H}_{R_0}$, we define $\chi_{R_0}$ to be the characteristic function of the exterior region $\{(x,t): |x|>|t|+R_0\}$, use the notation $F(u) = \zeta |u|^{p-1} u$ and consider a transformation from $X([0,T])$ to itself
\[
 \mathbf{T} u = \mathbf{S}_L(t) (u_0,u_1) + \int_0^t \frac{\sin\left((t-\tau)\sqrt{-\Delta}\right)}{\sqrt{-\Delta}} \chi_{R_0} F(u) d\tau, \qquad |x|>|t|+R_0.
\]
Here $\mathbf{S}_L(t)$ is the linear wave propagation operator. In other words, $\mathbf{T} u$ is the solution $\bar{u}$ to linear wave equation $\partial_t^2 \bar{u} - \Delta \bar{u} = \chi_{R_0} F(u)$ in the exterior region with initial data\footnote{One may define $(u_0,u_1)$ in the interior region $\{x: |x|\leq R_0\}$ in any way so that $(u_0,u_1) \in \dot{H}^1(\Rm^3)\times L^2(\Rm^3)$ and consider the solution $\bar{u}$ to linear wave equation $\partial_t^2 \bar{u} - \Delta \bar{u} = \chi_{R_0} F(u)$ with these initial data. By finite speed propagation of wave equation, the choice of $(u_0,u_1)$ does not affect the value of $\bar{u}$ in the exterior region.} $(u_0,u_1)$. We may apply Strichartz estimates and obtain (An almost complete version of Strichartz estimates can be found in Ginibre-Velo \cite{strichartz}. For reader's convenience we put their results in 3-dimensional case in Section \ref{sec: preliminary})
\begin{align*}
 \|\mathbf{T}(u)\|_{X([0,T])} &\leq C_p \left(\|(u_0,u_1)\|_{\mathcal{H}_{R_0}} + \|\chi_{R_0} F(u)\|_{L^1 L^2 ([0,T] \times \Rm^3)}\right)\\
 & \leq C_p \left(\|(u_0,u_1)\|_{\mathcal{H}_{R_0}} + T^{\frac{5-p}{2}} \|\chi_{R_0} F(u)\|_{L^{\frac{2}{p-3}} L^2 ([0,T] \times \Rm^3)}\right)\\
 & = C_p \left(\|(u_0,u_1)\|_{\mathcal{H}_{R_0}} + T^{\frac{5-p}{2}} \|u\|_{X([0,T])}^p\right);
\end{align*}
and 
\begin{align*}
 \|\mathbf{T}(u)-\mathbf{T}(\tilde{u})\|_{X([0,T])} & \leq C_p \|\chi_{R_0} F(u) - \chi_{R_0} F(\tilde{u})\|_{L^1 L^2 ([0,T] \times \Rm^3)} \\
 & \leq p C_p T^{\frac{5-p}{2}} \left(\|u\|_{X([0,T])}^{p-1} + \|\tilde{u}\|_{X([0,T])}^{p-1}\right) \|u-\tilde{u}\|_{X([0,T])}. 
\end{align*}
Thus the transformation $\mathbf{T}$ is a contraction map from the complete metric space $\{u\in X([0,T]): \|u\|_{X([0,T]) < 2C_p \|(u_0,u_1)\|_{\mathcal{H}_{R_0}}}\}$ to itself if 
\[
 T < C(p) \|(u_0,u_1)\|_{\mathcal{H}_{R_0}}^{-\frac{2(p-1)}{5-p}}. 
\]
We may apply a classic fixed-point argument to obtain the local existence and uniqueness of solutions to (CP1) in exterior regions. More details about this type of argument can be found in \cite{loc1, ls}

\begin{definition} [Exterior solutions] \label{def of exterior solution}
We say a function $u$ defined in the exterior region $\{(x,t): |x|<|t|+R_0, -T_-<t<T_+\}$ is an exterior solution to (CP1) with initial data $(u_0,u_1) \in \mathcal{H}_{R_0}$, if $u \in X(I)$ for all bounded closed intervals $I \subset (-T_-,T_+)$, so that 
 \begin{equation}
   u = \mathbf{S}_L(t) (u_0,u_1) + \int_0^t \frac{\sin\left((t-\tau)\sqrt{-\Delta}\right)}{\sqrt{-\Delta}} \chi_{R_0} F(u) d\tau, \quad |x|>|t|+R_0. \label{def of exterior}
\end{equation}
\end{definition}
\begin{remark} \label{exterior is restriction}
We may define initial data $(u_0,u_1)$ in the interior region $\{x: |x|\leq R_0\}$ so that $(u_0,u_1) \in \dot{H}^1(\Rm^3)\times L^2(\Rm^3)$. The right hand side of \eqref{def of exterior} then becomes a function $\tilde{u}$ defined for all $(x,t) \in \Rm^3 \times (-T_-,T_+)$ so that $(\tilde{u}(\cdot,t), \tilde{u}_t(\cdot,t)) \in C((-T_-, T_+); \dot{H}^1(\Rm^3)\times L^2(\Rm^3))$ and $\tilde{u} \in L_{loc}^\frac{2p}{p-3} L^{2p} ((-T_-,T_+)\times \Rm^3)$. Thus an exterior solution defined above is always the restriction of such a function in the exterior region. We also have $F(\tilde{u}), \chi_{R_0} F(u) \in L_{loc}^1 L^2 ((-T_-,T_+) \times \Rm^3)$. 
\end{remark}
\begin{proposition}
 Given any initial data $(u_0,u_1)\in \mathcal{H}_{R_0}$, there is a unique exterior solution to (CP1) with a maximal lifespan $(-T_-,T_+)$ with 
 \[
  |T_-|, |T_+| \geq C(p) \|(u_0,u_1)\|_{\mathcal{H}_{R_0}}^{-\frac{2(p-1)}{5-p}}.
 \]
\end{proposition}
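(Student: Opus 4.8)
The plan is to derive the statement from a standard Banach fixed-point argument on a short time interval, using the two displayed estimates for $\mathbf{T}$ established just above the proposition, and then to organize the resulting local solutions into a single maximal one. First I would apply the contraction mapping principle on the complete metric space $\{u\in X([0,T]):\|u\|_{X([0,T])}\le 2C_p\|(u_0,u_1)\|_{\mathcal{H}_{R_0}}\}$: the boundedness estimate shows $\mathbf{T}$ maps this ball into itself, and the difference estimate shows it is a strict contraction there, as soon as $T<C(p)\|(u_0,u_1)\|_{\mathcal{H}_{R_0}}^{-2(p-1)/(5-p)}$. The unique fixed point is, by Definition \ref{def of exterior solution}, an exterior solution on $\{(x,t):|x|>|t|+R_0,\ 0\le t<T\}$ with the correct initial data, since the Duhamel term vanishes at $t=0$. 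Running the same argument for the time-reflected equation — still an equation of the form (CP1) — and matching at $t=0$ produces an exterior solution on a symmetric interval $(-T,T)$ with $T$ as above, which already contains the asserted lower bound on $|T_\pm|$ once the maximal interval has been defined.

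Next I would upgrade the \emph{conditional} uniqueness furnished by the fixed point — valid a priori only inside the ball — to uniqueness among \emph{all} exterior solutions with the given data. Given two such solutions $u,\tilde u$ on overlapping intervals, I would consider the set of times up to which they coincide and argue that it is both closed and open in the common interval: closedness is immediate, and for openness, at a time $t_0$ of agreement I would restart the Duhamel representation at $t_0$ and use that $\|u\|_{X([t_0,t_0+\delta])}+\|\tilde u\|_{X([t_0,t_0+\delta])}\to 0$ as $\delta\downarrow 0$ — a dominated-convergence statement in the time variable, since both functions lie in $X$ of a slightly larger interval — so that on a short enough interval the difference estimate for $\mathbf{T}(u)-\mathbf{T}(\tilde u)$ absorbs its own left-hand side and forces $u=\tilde u$ there. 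Finite speed of propagation, as recorded in Remark \ref{exterior is restriction}, is what legitimizes this comparison: $\chi_{R_0}F(u)$ and the Duhamel integral depend only on the values of $u$ in the exterior region.

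With unconditional uniqueness available, I would assemble the maximal solution in the usual way: let $T_+>0$ be the supremum of all $T$ for which an exterior solution with data $(u_0,u_1)$ exists on $\{(x,t):|x|>|t|+R_0,\ 0\le t<T\}$, and let $T_->0$ be defined by the analogous condition for negative times. Since the restriction of a solution to a sub-interval is again a solution, the admissible set of $T$ is an interval; choosing $T_n\uparrow T_+$ with the associated solutions — which agree on overlaps by uniqueness — yields a single exterior solution on $[0,T_+)$, and matching it at $t=0$ with its counterpart on $(-T_-,0]$ gives an exterior solution on $(-T_-,T_+)$ that is maximal by construction. Its uniqueness is exactly the statement proved in the previous step, and the bound $|T_\pm|\ge C(p)\|(u_0,u_1)\|_{\mathcal{H}_{R_0}}^{-2(p-1)/(5-p)}$ is precisely what the first step delivers.

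As for where the actual difficulty sits: essentially none of it is in the fixed-point step, since the required Strichartz-type bounds and the explicit way the admissible time-length and ball radius depend on $\|(u_0,u_1)\|_{\mathcal{H}_{R_0}}$ have all been carried out in the paragraphs preceding the proposition. The one point that needs genuine care — and which I expect to be the main, though mild, obstacle — is the passage from the conditional uniqueness of the fixed point to uniqueness among arbitrary exterior solutions; this is handled by the vanishing of the $X([t_0,t_0+\delta])$-norm as $\delta\downarrow 0$ together with the connectedness (closed-and-open) continuation argument, underpinned by the finite-speed-of-propagation remark that keeps the exterior Duhamel formulation self-consistent.
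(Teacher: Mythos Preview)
Your proposal is correct and follows exactly the ``classic fixed-point argument'' that the paper invokes just before stating the proposition; the paper does not give a detailed proof but merely cites \cite{loc1, ls} for the standard mechanism you have spelled out. Your added care about upgrading conditional to unconditional uniqueness via the closed-and-open argument and the $\|u\|_{X([t_0,t_0+\delta])}\to 0$ absorption is precisely the content hidden behind that citation.
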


\paragraph{Non-radiative solutions} Now we may define (weakly) non-radiative solutions. 
\begin{definition}[Non-radiative solutions]
 We say a function $u$ defined in the exterior region $\{(x,t): |x|>|t|+R_0\}$ is an $R_0$-weakly non-radiative solution to (CP1) with initial data $(u_0,u_1) \in \mathcal{H}_{R_0}$, if $u$ is an exterior solution to (CP1) defined in Definition \ref{def of exterior solution} so that 
 \[
  \lim_{t\rightarrow \pm \infty} \int_{|x|>|t|+R_0} |\nabla_{x,t} u(x,t) |^2 dx = 0.
 \]
 In particular, we call $u$ a non-radiative solution if $u$ satisfies the conditions above with $R_0=0$. 
\end{definition}

\paragraph{Global existence of defocusing equations} If initial data $(u_0,u_1) \in \dot{H}^1 (\Rm^3) \times L^2(\Rm^3)$ comes with a finite energy
\[
 E = \int_{\Rm^3} \left(\frac{1}{2}|\nabla u_0(x)|^2 +\frac{1}{2}|u_1(x)|^2 + \frac{1}{p+1}|u_0(x)|^{p+1}\right) dx < + \infty,
\]
then we may combine a local theory (with $X(I) = L^{\frac{2p}{p-3}} L^{2p} (I \times \Rm^3)$ defined for the whole space $\Rm^3$ instead of exterior region $\{x: |x|>|t|+R_0\}$) and the energy conservation law to conclude that 

\begin{proposition} \label{global existence whole space}
 Given any $(u_0,u_1) \in (\dot{H}^1 \cap L^{p+1})(\Rm^3) \times L^2(\Rm^3)$, there exists a unique solution to (CP1) in the defocusing case in the whole space-time $\Rm^3 \times \Rm$. 
\end{proposition}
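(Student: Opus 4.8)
The plan is to promote the contraction-mapping local theory of the previous subsection to a global statement by feeding in the conservation of energy, exploiting that the problem is energy-subcritical ($s_p<1$ when $3<p<5$).

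First I would set up the local theory on the whole space exactly as above, replacing $\chi_{R_0}$ by $1$ and taking $X(I)=L^{\frac{2p}{p-3}}L^{2p}(I\times\Rm^3)$. The same Strichartz bounds $\|F(u)\|_{L^1L^2([0,T]\times\Rm^3)}\le T^{\frac{5-p}{2}}\|u\|_{X([0,T])}^p$ and the matching Lipschitz estimate make $\mathbf{T}$ a contraction on a ball of $X([0,T])$ whenever $T<C(p)\|(u_0,u_1)\|_{\dot H^1\times L^2}^{-\frac{2(p-1)}{5-p}}$; the Duhamel formula together with Strichartz also gives $(u,u_t)\in C(I;\dot H^1\times L^2)$. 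Iterating produces a unique solution on a maximal interval $(-T_-,T_+)$, unique in the class $u\in L_{loc}^{\frac{2p}{p-3}}L^{2p}$, with the blow-up criterion that $T_+<\infty$ forces $\limsup_{t\to T_+^-}\|(u(t),u_t(t))\|_{\dot H^1\times L^2}=\infty$ — this is immediate because the length of each continuation step depends only on that norm. The details are as in \cite{loc1,ls}.

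Next I would record the conservation of the energy $E(t)=\int_{\Rm^3}\bigl(\tfrac12|\nabla u(x,t)|^2+\tfrac12|u_t(x,t)|^2+\tfrac{1}{p+1}|u(x,t)|^{p+1}\bigr)\,dx$, so that $E(t)\equiv E$. For smooth, spatially decaying solutions this is the classical identity obtained by pairing the equation with $u_t$ and integrating by parts (in the defocusing case the reaction term contributes $+\tfrac{1}{p+1}\partial_t|u|^{p+1}$). To reach energy-space data I would approximate $(u_0,u_1)$ by smooth, compactly supported $(u_0^{(n)},u_1^{(n)})\to(u_0,u_1)$ in $(\dot H^1\cap L^{p+1})(\Rm^3)\times L^2(\Rm^3)$, invoke the continuous dependence from the local theory to get convergence in $C([0,T];\dot H^1\times L^2)\cap X([0,T])$ on any interval of common existence, and pass to the limit in the energy identity; the Strichartz norm of $u$ in $X$ controls the evolution of $\|u(t)\|_{L^{p+1}}$ along the approximation. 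Note that $p+1<6$ here, so $\dot H^1(\Rm^3)$ does not by itself control $L^{p+1}$ on a time slice; this is precisely why the hypothesis includes $u_0\in L^{p+1}$, and conservation then keeps $\|u(t)\|_{L^{p+1}}$ finite for every $t$.

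Finally, in the defocusing case every term of $E(t)$ is nonnegative, so conservation yields the a priori bound $\|\nabla u(t)\|_{L^2}^2+\|u_t(t)\|_{L^2}^2\le 2E$ for all $t\in(-T_-,T_+)$. If $T_+$ were finite, the blow-up criterion above would force this quantity to be unbounded as $t\to T_+^-$, a contradiction; hence $T_+=+\infty$, and by the same argument $T_-=+\infty$, so the solution is global, and uniqueness was already part of the local theory. I expect the only genuinely technical point to be the second step — the rigorous passage from the classical energy identity to energy-space solutions, together with the accompanying control of the potential term $\|u(t)\|_{L^{p+1}}^{p+1}$; the local theory, the blow-up criterion, and the globalization from an a priori bound are the standard subcritical arguments.
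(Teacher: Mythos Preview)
Your proposal is correct and follows exactly the approach the paper indicates: the paper does not give a detailed proof of this proposition but simply says, in the paragraph preceding it, that one combines the local theory in $X(I)=L^{\frac{2p}{p-3}}L^{2p}(I\times\Rm^3)$ with the energy conservation law. Your write-up fleshes out precisely this sketch, including the blow-up criterion and the approximation argument for energy conservation, so there is nothing to add.
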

\begin{remark}
 The restriction of a solution given in Proposition \ref{global existence whole space} to the exterior regions is of course an exterior solution to (CP1). Thus given any initial data $(u_0,u_1)\in \mathcal{H}_{R_0}$ with $\|u_0\|_{L^{p+1}(\{x\in \Rm^3: |x|>R_0\})}<+\infty$, the corresponding exterior solution to (CP1) in the defocusing case must be the restriction of a finite-energy solution of (CP1) in the whole space-time to the exterior region thus globally defined in time. 
\end{remark}

\subsection{Main results}

\paragraph{Examples of weakly non-radiative solutions} According to the definition given above, a direct calculation shows that 
\[
 u(x,t) =  \left[\frac{2(p-3)}{(p-1)^2}\right]^{\frac{1}{p-1}} |x|^{-\frac{2}{p-1}}
\]
is an $R$-weakly non-radiative solution to (CP1) in the focusing case for all $R>0$. The angle $\theta$ between $(u(x,0), u_t(x,0))$ and $(1/|x|, 0)$ in the space $\mathcal{H}_{rad, R}$ is determined by
\[
 \cos \theta = \frac{\left\langle(|x|^{-2/(p-1)},0), (|x|^{-1},0)\right\rangle_{\mathcal{H}_{rad,R}}}{\|(|x|^{-2/(p-1)},0)\|_{\mathcal{H}_{rad,R}} \|(|x|^{-1},0)\|_{\mathcal{H}_{rad,R}}} = \frac{1}{2}\sqrt{(5-p)(p-1)} \in (0,1).
\]
Thus $\theta \in (0,\pi/2)$ is a positive angle independent of $R$. Please note that $u(x,t)$ is NOT a non-radiative solution to (CP1) because the initial data $u(x,0) \notin \dot{H}^1 (\Rm^3)$. In the defocusing case we may give a more interesting example:

\begin{theorem} \label{main 1}
 There exists a radial $C^2$ solution $u$ to (CP1) in the defocusing case defined in the exterior region $\{(x,t): |x|>|t|\}$ with nonzero initial data $(0, u_1)$ so that $u$ is an $R$-weakly non-radiative solution to (CP1) for any $R>0$.  
\end{theorem}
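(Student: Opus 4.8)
The plan is to build $u$ in a self-similar form adapted to the exterior light cone, which reduces the problem to a second order ODE on $(-1,1)$, and then to solve that ODE by a shooting argument. Write $\beta=\tfrac{2}{p-1}$ and $\alpha=\tfrac{p-3}{p-1}$, so that $\alpha,\beta\in(0,1)$, $\alpha+\beta=1$, and $\beta>\tfrac12$ exactly because $p<5$. I look for $u(x,t)=|x|^{-\beta}f(t/|x|)$ on $\{|x|>|t|\}$ with $f\in C^2(-1,1)$. A direct computation (cleanest for $v=|x|\,u=|x|^{\alpha}f(t/|x|)$, which must satisfy $v_{tt}-v_{rr}=-r^{1-p}|v|^{p-1}v$) shows that $u$ solves (CP1) with $\zeta=-1$ on $\{|x|>|t|\}$ if and only if
\begin{equation}\label{planode}
 (1-s^2)f''-2(1-\alpha)sf'+\alpha(1-\alpha)f+|f|^{p-1}f=0,\qquad s\in(-1,1).
\end{equation}
The data of $u$ are $\bigl(|x|^{-\beta}f(0),\,f'(0)\,|x|^{-\beta-1}\bigr)$, which lies in $\mathcal H_{rad,R}$ for every $R>0$, has the form $(0,u_1)$ exactly when $f(0)=0$, and is nonzero exactly when $f'(0)\neq0$. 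Substituting $s=t/r$ in the exterior energy gives, for $t\neq0$,
\[
 \int_{|x|>|t|}|\nabla_{x,t}u|^2\,dx=4\pi\,|t|^{\frac{p-5}{p-1}}\int_0^1 s^{2\beta-2}\bigl[(\beta f(s)+sf'(s))^2+f'(s)^2\bigr]\,ds,
\]
and the analogue for $t<0$; since $p<5$ the exponent of $|t|$ is negative, so $u$ will be $R$-weakly non-radiative for all $R>0$ as soon as this $s$-integral converges. Convergence near $s=0$ is automatic (using $f(0)=0$ and $2\beta>1$); near $s=\pm1$ it holds if and only if $f'$ is bounded there, i.e. if and only if $f$ is the regular (not the $(1\mp s)^{\alpha}$) branch of \eqref{planode} at each endpoint.

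Next I record a monotone quantity for \eqref{planode}. Multiplying the equation by $(1-s^2)^{-\alpha}f'$ shows that
\[
 \mathcal E(s)=\tfrac12(1-s^2)^{1-\alpha}f'^2+\tfrac{\alpha(1-\alpha)}{2}(1-s^2)^{-\alpha}f^2+\tfrac1{p+1}(1-s^2)^{-\alpha}|f|^{p+1}
\]
satisfies $\mathcal E'(s)=s\cdot(\text{an explicit nonnegative expression})$, so $\mathcal E$ is nondecreasing on $[0,1)$ and, by a Gr\"onwall estimate of the form $\mathcal E'/\mathcal E\lesssim s/(1-s^2)$, $\mathcal E(s)\lesssim\mathcal E(0)(1-s^2)^{-C}$ there. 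Since $\mathcal E$ dominates $f$ and $f'$ on compact subsets of $(-1,1)$, every local solution of \eqref{planode} extends to all of $(-1,1)$ with no interior blow-up. Any solution with $f(0)=0$ is automatically odd (uniqueness plus the symmetry $(s,f)\mapsto(-s,-f)$ of \eqref{planode}), so $\mathcal E(0)=\tfrac12 f'(0)^2$; and for odd solutions, regularity at $s=1$ and regularity at $s=-1$ are equivalent.

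The heart of the proof is to find a solution of \eqref{planode} that is nonzero, odd, regular at $\pm1$, and has $f'(0)\neq0$. For $A\in\Rm$ let $f_A$ be the solution regular at $s=1$ with $f_A(1)=A$: near $s=1$ this is obtained by a contraction-mapping argument for the system $f(s)=A-\int_s^1 f'$, $f'(s)=(1-s^2)^{\alpha-1}\int_s^1(1-\sigma^2)^{-\alpha}\bigl[\alpha(1-\alpha)f+|f|^{p-1}f\bigr]\,d\sigma$, and by the previous step it extends to $(-1,1]$. Set $P(A)=f_A(0)$, continuous with $P(0)=0$ (since $f_0\equiv0$). It suffices to produce $A_*\neq0$ with $P(A_*)=0$, for then $f:=f_{A_*}$ is nonzero, regular at $s=1$, has $f(0)=0$ (hence is odd, hence regular at $s=-1$) and $f'(0)\neq0$ (otherwise $f\equiv0$). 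No such $A_*$ lies near $0$: linearizing at $A=0$ gives $P(A)=A\,g_1(0)+O(|A|^p)$, where $g_1$ is the solution of \eqref{planode} with $|f|^{p-1}f$ deleted that is regular at $s=1$ and normalized by $g_1(1)=1$; and $g_1(0)\neq0$, since otherwise $g_1$ would be an odd solution of a Gegenbauer equation regular at both endpoints, forcing the index $\alpha$ to be a nonnegative integer, contrary to $3<p<5$. The zero at $A_*$ is produced instead by a large-data oscillation argument: for $|A|$ large the superlinear term $|f_A|^{p-1}f_A$ dominates \eqref{planode} on, say, $[0,\tfrac12]$, so there $f_A$ behaves like a nonlinear oscillator of amplitude $\sim|A|$ and wavelength $\sim|A|^{-(p-1)/2}$, completing $\sim|A|^{(p-1)/2}\to\infty$ oscillations before reaching $s=0$; a Sturm comparison makes this precise and shows $P$ changes sign as $A\to\infty$, so $P(A_*)=0$ for some (indeed arbitrarily large) $A_*\neq0$.

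Given such $f=f_{A_*}$, the function $u(x,t)=|x|^{-\beta}f(t/|x|)$ is the desired solution: it is $C^2$ on $\{|x|>|t|\}$ (a composition of the $C^2$ function $f$ with $t/|x|$ and $|x|^{-\beta}$, which are smooth off the origin, and the origin is not in the region), solves (CP1) with $\zeta=-1$, and has nonzero data $(0,u_1)$ with $u_1=f'(0)|x|^{-\beta-1}$; since $f$ is regular at $\pm1$ the $s$-integral above is finite, so $\int_{|x|>|t|}|\nabla_{x,t}u|^2\,dx=O\bigl(|t|^{(p-5)/(p-1)}\bigr)\to0$ as $t\to\pm\infty$, whence $\int_{|x|>|t|+R}|\nabla_{x,t}u|^2\,dx\to0$ for every $R>0$; and for each fixed $R>0$ the restriction of $u$ to $\{|x|>|t|+R\}$ lies in the space $X(I)$ (that region stays away from the origin), so it is an exterior solution in the sense of Definition~\ref{def of exterior solution} and therefore an $R$-weakly non-radiative solution. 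The first, second and fourth steps are routine; the main obstacle is the third, and specifically the production of the nonzero matching value $A_*$. This is a genuinely nonlinear effect — the linearized problem has no nonzero solution regular at both endpoints — so it cannot come from a perturbation of $f\equiv0$; the quantitative control of the rapid oscillations of $f_A$ for large $|A|$, and the resulting sign change of $P$, is the technical crux of the argument.
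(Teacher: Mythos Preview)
Your approach is essentially the paper's: the same self-similar ansatz $u=|x|^{-\beta}f(t/|x|)$ reducing to the same ODE, global existence via a monotone energy, and a shooting argument whose success rests on an oscillation count. The chief difference is the shooting direction. The paper shoots from the center with parameter $a=f'(0)$ and seeks $a$ for which $G=\lim_{s\to1^-}(1-s^2)^{\beta}f'(s)$ vanishes; you shoot from the regular endpoint with parameter $A=f(1)$ and seek $A$ with $f_A(0)=0$. These are dual formulations and either works. For the oscillation step the paper uses two semi-conservation laws --- one giving $|f|\lesssim a^{2/(p+1)}$ and $|f'|\lesssim a$ on $[0,1/2]$, the other the lower bound $\tfrac12(1-x^2)|f'|^2+P(f)\geq a^2/2$ --- to show that any subinterval of $(0,1/2)$ free of extrema and zeros has length $\lesssim a^{-(p-1)/(p+1)}$, hence the number of extrema $N(a)\to\infty$; a topological argument ($N(a)$ is locally constant where $G\neq0$) then forces $G=0$ for infinitely many $a$. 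Your heuristic ``amplitude $\sim|A|$, wavelength $\sim|A|^{-(p-1)/2}$ on $[0,1/2]$'' is not quite right as stated: the conservation laws give amplitude $\sim a^{2/(p+1)}$ there, not $\sim A$. But since those same laws yield $a\gtrsim |A|$ for the regular branch (indeed $|f(1)|\lesssim a$), the oscillation count still diverges with $|A|$, and your sign-change argument for $P(A)$ goes through once you invoke the paper's interval-length estimate rather than the informal Sturm comparison. Your linearization remark --- the regular solution of the linearized equation at $s=1$ is $2^{-\alpha}(1+s)^{\alpha}$, which is nonzero at $s=0$ --- is a nice addition absent from the paper: it confirms explicitly that the matching cannot be achieved perturbatively and is a genuinely nonlinear effect.
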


\begin{remark}
 This is clear that $\langle(0,u_1), (1/|x|, 0) \rangle_{\mathcal{H}_{rad, R}} = 0$ for all $R>0$, i.e. initial data $(0,u_1)$ is orthogonal to $(1/|x|,0)$ in $\mathcal{H}_{rad,R}$. But the example we give is not a non-radiative solution to (CP1), because $u_1 \notin L^2(\Rm^3)$ although $u_1 \in L^2(\{x: |x|>R\})$ for all $R>0$. Please see section \ref{sec: example} for more details. 
\end{remark}

\paragraph{Stationary solutions} Next we consider stationary solutions to (CP1) with similar asymptotic behaviour to $1/|x|$, the single generator of $P(R)$ in dimension 3. The focusing case (Proposition \ref{stationary focusing}) has been considered in Shen \cite{shen2}. The defocusing case (Proposition \ref{stationary defocusing}) is discussed in Section \ref{sec: stationary defocusing}. 

\begin{proposition} \label{stationary focusing}
 The elliptic equation $-\Delta U(x) = |U|^{p-1} U(x)$ has a radial solution $U^+ \in C^\infty (\Rm^3 \setminus \{0\})$ so that $U^+ \notin \dot{H}^{s_p}(\Rm^3)$ and 
 \begin{align}
  &\left|U^+(x)-1/|x|\right| \lesssim |x|^{2-p},& &|\nabla U^+(x)| \lesssim 1/|x|^2, & &|x|\gg 1.&  \label{asymptotic of stationary}
 \end{align}
\end{proposition}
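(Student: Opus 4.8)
The plan is to treat this radial elliptic equation as a scalar ODE, construct $U^+$ first on an exterior region, and then continue it in to the origin. Writing $U=U(r)$ with $r=|x|$ and setting $w(r)=rU(r)$, the identity $U''+\tfrac2r U'=w''/r$ turns $-\Delta U=|U|^{p-1}U$ into
\[
 w''=-r^{1-p}\,|w|^{p-1}w,\qquad r>0 .
\]
Equivalently, in the Emden--Fowler variables $t=\ln r$, $\alpha:=\tfrac2{p-1}$, $v(t)=r^{\alpha}U(r)$, the equation becomes the autonomous ODE
\[
 \ddot v+\frac{p-5}{p-1}\,\dot v-\lambda v+|v|^{p-1}v=0,\qquad \lambda:=\frac{2(p-3)}{(p-1)^2}>0 ,
\]
whose positive constant solution $v\equiv c_p:=\lambda^{1/(p-1)}$ corresponds to the explicit singular solution $c_p|x|^{-\alpha}$. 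Since $p>3$ we have $\alpha<1$, so a solution with $U\sim C/r$ at infinity corresponds to $v\to 0$ as $t\to+\infty$; and in the $w$--equation the coefficient $r^{1-p}$ is twice integrable near $r=\infty$ (because $2-p<-1$), so that regime is a perturbation of $w''=0$.

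First I would build $U^+$ on $\{r\ge R_0\}$ for $R_0$ large. Imposing $w(r)\to1$, $w'(r)\to0$ as $r\to\infty$ and integrating the ODE twice (using Fubini) gives the fixed point equation
\[
 w(r)=1-\int_r^\infty(\tau-r)\,\tau^{1-p}\,|w(\tau)|^{p-1}w(\tau)\,d\tau ,\qquad r\ge R_0 .
\]
Because $\int_{R_0}^\infty\tau^{2-p}\,d\tau=R_0^{3-p}/(p-3)$, a routine contraction argument on $\{w\in C_b([R_0,\infty)):\|w-1\|_{L^\infty}\le\tfrac12\}$ gives, for $R_0$ large, a unique fixed point $w$ with $\tfrac12\le w\le\tfrac32$; differentiating shows $w\in C^2$ solves the ODE, and since $w>0$ the nonlinearity $w^p$ is smooth, so $w$ and hence $U^+$ are $C^\infty$ on $\{r\ge R_0\}$. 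The same identities give $|w(r)-1|\lesssim\int_r^\infty\tau^{2-p}\,d\tau\lesssim r^{3-p}$ and $|w'(r)|\lesssim\int_r^\infty\tau^{1-p}\,d\tau\lesssim r^{2-p}$, hence $|U^+(x)-1/|x||=|w-1|/r\lesssim r^{2-p}$ and, using $1-p<-2$, $|\nabla U^+|=|w'/r-w/r^2|\lesssim r^{1-p}+r^{-2}\lesssim r^{-2}$; this is \eqref{asymptotic of stationary}, with leading coefficient exactly $1$ by construction.

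Next I would continue the solution to all $r\in(0,\infty)$, keep it positive (so that $U^+\in C^\infty(\Rm^3\setminus\{0\})$), and read off its behaviour near the origin. In the Emden--Fowler picture the asymptotics above say $(v,\dot v)\to(0,0)$ as $t\to+\infty$, i.e.\ the trajectory lies on the one-dimensional stable manifold of the saddle $(0,0)$: the linearization $\mu^2+\tfrac{p-5}{p-1}\mu-\lambda=0$ has roots $\mu_-=\tfrac{3-p}{p-1}<0$ and $\mu_+=\tfrac2{p-1}>0$, and $r^{-\alpha}e^{\mu_- t}\sim C/r$ precisely because $\mu_-=\alpha-1$. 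The Lyapunov function $\mathcal E(v,\dot v)=\tfrac12\dot v^2-\tfrac\lambda2 v^2+\tfrac1{p+1}|v|^{p+1}$ satisfies $\tfrac{d}{dt}\mathcal E=\tfrac{5-p}{p-1}\,\dot v^2\ge0$; since $\mathcal E\to0$ as $t\to+\infty$ this forces $\mathcal E\le0$ along the trajectory, which in turn bounds $|v|$ (from $\tfrac1{p+1}|v|^{p+1}\le\tfrac\lambda2 v^2$) and then $|\dot v|$, so there is no finite-$t$ blow-up and the trajectory, hence $U^+$, is defined for all $t\in\Rm$, i.e.\ on all of $(0,\infty)$. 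A LaSalle--type argument then identifies the backward limit set with an equilibrium; it cannot be $(0,0)$ (strict monotonicity of $\mathcal E$ would force $\dot v\equiv0$, hence $v\equiv0$), so it is $(c_p,0)$ or $(-c_p,0)$, and in particular $|U^+(x)|\simeq c_p|x|^{-\alpha}$ near the origin. The \textbf{main obstacle} is exactly this last phase-plane step in the precise form needed: one must show that the branch of the stable manifold selected by $v>0$ near $t=+\infty$ stays in $\{v>0\}$ all the way back (so that $U^+$ has no zero and stays $C^\infty$) and converges to $(c_p,0)$ rather than to $(-c_p,0)$ — this is where the focusing sign is used, via an analysis of the nullclines and of a trapping region such as $\{v>0,\ \mathcal E\le0\}$.

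Finally $U^+\notin\dot H^{s_p}(\Rm^3)$ follows from this behaviour at the origin. With $s_p=\tfrac32-\alpha$ the associated Sobolev exponent is $2^{\ast}_{s_p}=\tfrac6{3-2s_p}=\tfrac{3(p-1)}2$, and $\alpha\cdot 2^{\ast}_{s_p}=3$, so $\int_{0<|x|<\delta}|U^+|^{2^{\ast}_{s_p}}\,dx\gtrsim\int_0^\delta r^{-3}\,r^2\,dr=+\infty$; by the Sobolev embedding $\dot H^{s_p}(\Rm^3)\hookrightarrow L^{2^{\ast}_{s_p}}(\Rm^3)$ this forces $U^+\notin\dot H^{s_p}(\Rm^3)$. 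Only the lower bound $|U^+(x)|\gtrsim|x|^{-\alpha}$ near $0$ (also given by the phase-plane analysis) is needed here, not the sharp constant $c_p$.
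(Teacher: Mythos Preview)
Your argument is correct. The paper does not prove this proposition in the text; it simply cites the author's earlier work \cite{shen2}, and in Appendix~B treats only the defocusing analogue by the same scheme (write $z(r)=rU(r)$, construct $z$ near infinity by a fixed-point argument with $z(+\infty)=1$, $z'(+\infty)=0$, then continue backward by ODE theory). Your construction on $\{r\ge R_0\}$ and the derivation of \eqref{asymptotic of stationary} match that scheme exactly.

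Where you depart from the paper's outline is in the inward continuation. Instead of working directly with $w$ as in \cite{shen2}/Appendix~B, you pass to Emden--Fowler variables and exploit the Lyapunov function $\mathcal E$. This is a genuinely different and rather clean route: monotonicity of $\mathcal E$ together with $\mathcal E(+\infty)=0$ gives $\mathcal E\le 0$ along the trajectory, which simultaneously yields global existence (a priori bounds on $v,\dot v$), positivity, and the asymptotic $|U^+(x)|\simeq c_p|x|^{-2/(p-1)}$ near the origin via LaSalle. The last item is what feeds the Sobolev-embedding proof of $U^+\notin\dot H^{s_p}$, which the paper merely asserts.

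One remark on what you flag as the ``main obstacle'': the positivity step is in fact immediate from your own inequality $\mathcal E\le 0$. If $v(t_0)=0$ then $\mathcal E(t_0)=\tfrac12\dot v(t_0)^2\ge 0$, hence $\dot v(t_0)=0$ and $v\equiv 0$ by uniqueness, a contradiction. So the trajectory stays in $\{v>0\}$, and the backward limit cannot be $(0,0)$ (else $\mathcal E\equiv 0$ by monotonicity, forcing $v\equiv 0$) nor $(-c_p,0)$; the connected $\alpha$-limit set in $\{v>0\}$ is therefore $\{(c_p,0)\}$. With this observation your sketch becomes a complete proof.
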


\begin{proposition} \label{stationary defocusing}
 The elliptic equation $-\Delta U(x) = -|U|^{p-1} U(x)$ has a radial solution $U^- \in C^\infty (\{x: |x|>R_-\})$ so that $U^-$ has the same asymptotic behaviour as in \eqref{asymptotic of stationary} and the blow-up 
 \[
   \lim_{|x|\rightarrow (R_-)^+} U^-(x) = + \infty, \qquad R_->0.
 \]
\end{proposition}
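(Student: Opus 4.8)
The substitution $U^-(x)=v(|x|)/|x|$ turns the radial equation $\Delta U=|U|^{p-1}U$ into
\[
 v''(r)=r^{1-p}\,|v(r)|^{p-1}v(r),\qquad r>0.
\]
The plan is to solve this ODE near $r=+\infty$ with $v(+\infty)=1$, extract the asymptotics of $U^-$, and then continue the solution towards the origin, showing that it must blow up at some finite positive radius $R_-$; the last point is the crux.

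\paragraph{Construction near infinity} For $R\gg 1$ I would apply the contraction mapping principle to $\Phi(v)(r)=1+\int_r^\infty (s-r)\,s^{1-p}v(s)^p\,ds$ on $\{v\in C([R,\infty)):1\le v\le 2\}$. Because $p>3$,
\[
 \int_r^\infty (s-r)\,s^{1-p}\,ds=\frac{r^{3-p}}{(p-2)(p-3)},
\]
which is small for $r\ge R$ once $R$ is large, so $\Phi$ is a contraction. Its fixed point solves the ODE on $[R,\infty)$ with $v\to1$ and $v'\to0$, and the same identities give $|v(r)-1|\lesssim r^{3-p}$ together with $|v'(r)|\lesssim r^{2-p}$. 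Dividing by $|x|$ and using $1-p<-2$ yields exactly the asymptotics in \eqref{asymptotic of stationary}; $C^\infty$ regularity of $U^-$ for large $|x|$ is routine.

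\paragraph{Inward continuation} Let $(R_-,\infty)$ be the maximal interval on which the fixed-point solution extends. While $v>0$ the solution is strictly convex, and since $v'<0$ near $+\infty$, a short continuity argument propagates $v>1$ and $v'<0$ to all of $(R_-,\infty)$; hence $L:=\lim_{r\to (R_-)^+}v(r)\in(1,+\infty]$, and if $R_->0$ were compatible with $L<+\infty$ then the bounded right-hand side $r^{1-p}v^p$ would let the solution continue past $R_-$ — impossible. So $R_->0$ already forces $U^-(x)\to+\infty$ as $|x|\to (R_-)^+$, and it only remains to exclude $R_-=0$. If $R_-=0$, then $U^-$ is a positive $C^2$ solution of $\Delta U^-=(U^-)^p$ on $\Rm^3\setminus\{0\}$; from $v\ge1$ we get $v''\ge r^{1-p}$, which integrated twice over a small interval gives $v(r)\gtrsim r^{3-p}$, hence $U^-(x)\gtrsim|x|^{2-p}$ near the origin. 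On the other hand the Keller--Osserman interior estimate for $\Delta U=U^p$, applied on the ball $B_{|x|}(x)\subset\Rm^3\setminus\{0\}$, forces $U^-(x)\le C_p|x|^{-2/(p-1)}$ for every $x\ne0$. Since $2-p<-\frac{2}{p-1}$ whenever $p>3$, these two bounds are incompatible as $|x|\to0$; therefore $R_->0$, and by the above $U^-$ blows up there.

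\paragraph{Main obstacle} The real work is the last step, i.e. showing the inward continuation cannot reach the puncture. The argument above does so by playing the forced growth rate $|x|^{2-p}$ off against the universal Keller--Osserman ceiling $|x|^{-2/(p-1)}$, the strict gap between these exponents (a consequence of $3<p<5$) being exactly what pins the blow-up radius $R_-$ away from $0$. If one prefers not to invoke Keller--Osserman, the same conclusion should follow by bootstrapping $v(r)\gtrsim r^{3-p}$ to $v(r)\gtrsim r^{-N}$ for arbitrarily large $N$ as $r\to0$, and then using that $v''\gtrsim v^p$ near the origin drives $v$ to $+\infty$ within $r$-distance $O\big(v(r_1)^{-(p-1)/2}\big)$ of any point $r_1$ — a length that is smaller than $r_1$ once $v(r_1)$ is large enough relative to $1/r_1$.
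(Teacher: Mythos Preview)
Your proposal is correct. The reduction to the one--dimensional equation for $v=rU^-$, the fixed--point construction near infinity, and the monotonicity/convexity argument on the maximal interval are exactly what the paper does (with $z$ in place of your $v$). The only substantive divergence is in the proof that $R_->0$.

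The paper argues this step by an explicit iteration of lower bounds: starting from $z\ge 1$ it integrates $z''\ge r^{1-p}z^p$ twice to get $z(r)\ge c_1^{-1}r^{-(p-3)}$, then feeds this back in to obtain $z(r)\ge c_k^{-1}r^{-\beta_k}$ with $\beta_{k+1}=p\beta_k+(p-3)$, and tracks the constants $c_k$ carefully enough to conclude that these bounds become impossible once $r<p^{-2(p-1)/(p-3)}$; this yields the explicit estimate $R_-\ge p^{-2(p-1)/(p-3)}$. Your main argument instead invokes the Keller--Osserman interior bound $U^-(x)\le C_p|x|^{-2/(p-1)}$ on the punctured space and plays it against the single--step lower bound $U^-(x)\gtrsim|x|^{2-p}$; since $p>3$ forces $2-p<-2/(p-1)$, this gives the contradiction in one stroke. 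Your route is cleaner and shorter provided one is willing to cite Keller--Osserman, while the paper's iteration is entirely self--contained and, as a bonus, produces a quantitative lower bound on the blow--up radius. The bootstrap alternative you sketch at the end is, in spirit, the paper's argument --- though the paper carries out the iteration with explicit constants rather than appealing to a finite--time blow--up lemma for $v''\gtrsim v^p$.
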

\begin{remark}
 The function $U^+ \notin \dot{H}^1(\Rm^3)$, otherwise we might combine $U^+ \in \dot{H}^1(\Rm^3)$ with the asymptotic behaviour of $U^+$ to obtain $u \in \dot{W}^{1,\frac{3(p-1)}{p+1}} (\Rm^3) \hookrightarrow \dot{H}^{s_p}(\Rm^3)$. This contradicts the already known fact $U^+ \notin \dot{H}^{s_p}$. The defocusing case is similar.  According to Lemma \ref{radial estimate}, a radial $\dot{H}^1 (\Rm^3)$ function $u(x)$ can never blow up when $|x|$ approaches a positive number. Thus $U^-$ is not the restriction of any radial $\dot{H}^1(\Rm^3)$ function. 
\end{remark}
\noindent We may define a family of radial stationary solutions to (CP1) by rescaling 
\begin{align*}
 U_C^+(x,t) = U_C^+ (x) = \left\{\begin{array}{ll} C^{-\frac{2}{p-3}}U^+(x/C^{\frac{p-1}{p-3}}), & \hbox{if}\quad C>0; \\ 0, & \hbox{if}\quad C=0; \\ 
 -|C|^{-\frac{2}{p-3}}U^+(x/|C|^{\frac{p-1}{p-3}}), & \hbox{if}\quad C<0; \end{array}\right. 
\end{align*}
and
\begin{align*}
 U_C^-(x,t) = U_C^- (x) = \left\{\begin{array}{lll} C^{-\frac{2}{p-3}}U^-(x/C^{\frac{p-1}{p-3}}), & |x|>C^{\frac{p-1}{p-3}} R_-, & \hbox{if}\quad C>0; \\ 0, & |x|>0, & \hbox{if}\quad C=0; \\ 
 -|C|^{-\frac{2}{p-3}}U^-(x/|C|^{\frac{p-1}{p-3}}), & |x|>|C|^{\frac{p-1}{p-3}} R_-, & \hbox{if}\quad C<0. \end{array}\right. 
\end{align*}
The solutions $U_C^+(x,t)$ are $R$-weakly non-radiative for any $R>0$. The solution $U_C^-(x,t)$ is $R$-weakly non-radiative for any $R>|C|^{\frac{p-1}{p-3}} R_-$. We recall the behaviour of $U^\pm$ near infinity, conduct a simple calculation and obtain
\[
  \left|U_C^\pm (x,t) - C/|x|\right| \lesssim |x|^{2-p}, \quad |x| \gg 1. 
\]
Please note that these solutions are NOT non-radiative unless $C=0$ because we have $U_C^\pm (x) \notin \dot{H}^1 (\Rm^3)$ for $C\neq 0$. 

\paragraph{Weakly non-radiative solution in $\dot{H}^{s_p}\times \dot{H}^{s_p-1}$} The second main result of this work is that any weakly non-radiative solutions to (CP1) in the critical Sobolev space must coincide with a stationary solution as given above. For convenience we define $\mathcal{H}_R^{s_p}$ to be the space of restrictions of all $(\dot{H}^1\cap \dot{H}^{s_p}) \times (L^2 \cap \dot{H}^{s_p-1})$ functions in the exterior region $\{x\in \Rm^3: |x|>R\}$. 

\begin{theorem} \label{main 2}
Assume $R>0$. Let $u$ be a radial $R$-weakly non-radiative solution to (CP1) with initial data $(u_0,u_1)\in \mathcal{H}_R^{s_p}$. Then there exist a constant $C$, so that $u(x,t) \equiv U_C^\pm (x,t)$ in the exterior region $\{(x,t): |x|>|t|+R\}$.  In the defocusing case, the constant $C$ above satisfies $|C| < (R/R_-)^{\frac{p-3}{p-1}}$.
\end{theorem}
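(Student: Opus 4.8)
The strategy follows the heuristic in the introduction: in a far exterior region $u$ coincides with a genuine global solution of small critical norm, so the nonlinearity becomes a lower-order perturbation of the free wave equation, after which a channel-of-energy rigidity argument as in the energy-critical case applies. Concretely, since $(u_0,u_1)\in\mathcal{H}_R^{s_p}$, the norm of its restriction to $\{|x|>R'\}$ in both $\dot{H}^1\times L^2$ and $\dot{H}^{s_p}\times\dot{H}^{s_p-1}$ tends to $0$ as $R'\to+\infty$. For $R'$ large I extend this restriction to a global function of comparably small norm in $(\dot{H}^1\cap\dot{H}^{s_p})\times(L^2\cap\dot{H}^{s_p-1})(\Rm^3)$, solve (CP1) globally by the small-data theory in the critical Sobolev space together with persistence of $\dot{H}^1$ regularity, and use finite speed of propagation to conclude that the resulting solution $v_{R'}$ agrees with $u$ on $\{|x|>|t|+R'\}$ while having small scattering norm and small $(\dot{H}^1\cap\dot{H}^{s_p})\times(L^2\cap\dot{H}^{s_p-1})$ norm uniformly in $t$; call this bound $\epsilon(R')\to 0$. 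In particular $\|(u(\cdot,t),u_t(\cdot,t))\|_{\mathcal{H}_{rad,R'}}\leq\epsilon(R')$ for \emph{all} $t$, not merely in the limit $t\to\pm\infty$.

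\textbf{Step 2 (channel of energy and the leading coefficient).} Write $v_{R'}(t)=\mathbf{S}_L(t)(v_{R'}(0),\partial_t v_{R'}(0))+N_{R'}(t)$, where $N_{R'}$ is the Duhamel term with source $F(v_{R'})$. The linear channel-of-energy identity recalled in the introduction gives that $\sum_{\pm}\lim_{t\to\pm\infty}\int_{|x|>|t|+R'}|\nabla_{x,t}\mathbf{S}_L(t)(v_{R'}(0),\partial_t v_{R'}(0))|^2\,dx$ equals $\|\Pi_{P(R')^\perp}(u_0,u_1)\|_{\mathcal{H}_{rad,R'}}^2$. Because $u$ is $R$-weakly non-radiative, the left side coincides with $\sum_{\pm}\lim_{t\to\pm\infty}\int_{|x|>|t|+R'}|\nabla_{x,t}N_{R'}(t)|^2\,dx$, and Strichartz estimates for the Duhamel operator bound the latter by a power $\epsilon(R')^{\theta}$ with $\theta>1$, hence by $o(\epsilon(R'))$. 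Thus $\|\Pi_{P(R')^\perp}(u_0,u_1)\|_{\mathcal{H}_{rad,R'}}$ is small and, more precisely, negligible compared with $\|\Pi_{P(R')}(u_0,u_1)\|_{\mathcal{H}_{rad,R'}}$ as $R'\to+\infty$. Since $P(R')=\mathrm{Span}\{(1/r,0)\}$ in dimension $3$, this produces a well-defined leading coefficient $C=\lim_{R'\to+\infty}R'\,u_0(R')$ and a quantitative decay rate for $\|(u_0,u_1)-(C/|x|,0)\|_{\mathcal{H}_{rad,R'}}$ as $R'\to+\infty$.

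\textbf{Step 3 (comparison with the stationary solution and rigidity).} Let $W$ be the stationary solution $U_C^{\pm}$ furnished by Propositions \ref{stationary focusing}--\ref{stationary defocusing}, which has the matching asymptotics $W(x)\simeq C/|x|$; the ODE uniqueness behind those propositions makes this choice canonical, and the bound $|C|<(R/R_-)^{\frac{p-3}{p-1}}$ will be forced below. Put $w=u-W$. On the exterior region $w$ solves $\partial_t^2 w-\Delta w=V(x,t)\,w$ with a short-range potential, $|V(x,t)|\lesssim|x|^{-(p-1)}$ and $p-1>2$; moreover $w$ is again $R$-weakly non-radiative (a stationary solution that is small outside a fixed ball trivially is), it has small critical norm in far exterior regions, and by Step 2 its leading coefficient vanishes. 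Re-running Step 2 for $w$, with the short-range linear potential in place of the nonlinearity and with the extra gain coming from the absence of a $1/|x|$ component, improves the decay rate of $\|(w(\cdot,t),w_t(\cdot,t))\|_{\mathcal{H}_{rad,R'}}$; iterating this produces decay faster than any polynomial in $R'$. Combined with a Gronwall estimate for $\int_{|x|>|t|+R'}|\nabla_{x,t}w|^2\,dx$ along the forward and backward light cones — where the boundary flux has a favourable sign and the interior term is controlled by $(R')^{-(p-3)}\|(w(\cdot,t),w_t(\cdot,t))\|_{\mathcal{H}_{rad,R'}}^2$ via Hardy's inequality — this forces $w\equiv 0$ on $\{|x|>|t|+R\}$. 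Hence $u\equiv U_C^{\pm}$ there; since $U_C^-$ is defined only for $|x|>|C|^{\frac{p-1}{p-3}}R_-$ while $u$ is defined on all of $\{|x|>|t|+R\}$, in the defocusing case $|C|^{\frac{p-1}{p-3}}R_-<R$, i.e. $|C|<(R/R_-)^{\frac{p-3}{p-1}}$.

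\textbf{Main obstacle.} The two delicate points are (i) the Strichartz bookkeeping in Step 2 (and its analogue for the short-range potential in Step 3) showing that the channel-energy of the Duhamel term is genuinely of higher order than $\epsilon(R')$ — this is exactly where the critical Sobolev hypothesis is used and where the energy-subcritical scaling must be tracked carefully, and nothing of the sort holds for the non-radiative examples of Theorem \ref{main 1}; and (ii) closing the iteration in Step 3, i.e. upgrading ``faster than any polynomial'' to exact coincidence, which requires marrying the light-cone wave estimate with the rigidity of the associated elliptic ODE near spatial infinity.
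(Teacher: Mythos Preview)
Your overall strategy is the Duyckaerts--Kenig--Merle channel-of-energy program imported from the energy-critical case, whereas the paper takes a quite different route: after an initial step close to your Step~1 (their Lemma~\ref{initial decay}), everything is done \emph{pointwise} via the one-dimensional reduction $w(r,t)=ru(r,t)$, which solves $w_{tt}-w_{rr}=\zeta r|u|^{p-1}u$, together with integration along characteristics. Concretely, they integrate $v_\pm=(\partial_t\mp\partial_r)w$ out to $t=\pm\infty$ (Lemma~\ref{characteristic lines plus non radiative}), bootstrap the pointwise decay from $|u|\lesssim r^{-2/(p-1)}$ to $|u|\lesssim r^{-1}$ by a discrete dyadic iteration (Lemma~\ref{induction beta}), read off $C=\lim_{r\to\infty} ru(r,t)$ directly (Proposition~\ref{asymptotic behaviour for match}), and then match $u$ with $U_C^\pm$ by another pointwise bootstrap along characteristics (Proposition~\ref{match two non radiative} and Lemma~\ref{match further}). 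No channel identity or Strichartz bookkeeping is used beyond the small-data $\dot H^{s_p}$ theory in Lemma~\ref{initial decay}.

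There is a genuine gap in your Step~2 as written: the channel identity gives $\|\Pi_{P(R')^\perp}(u_0,u_1)\|_{\mathcal H_{rad,R'}}\lesssim \epsilon(R')^{\theta}$, but you only have $\epsilon(R')\to 0$ with \emph{no rate} (the $\dot H^{s_p}$ tail decays only qualitatively), and your ``Strichartz bookkeeping'' actually needs an intermediate norm $\dot H^{(3p-5)/(2p)}$ interpolated between $\dot H^{s_p}$ and $\dot H^1$, not just one of them. Without a rate you cannot conclude that $c_{R'}=R'u_0(R')$ converges, nor obtain the ``quantitative decay rate'' you claim for $\|(u_0,u_1)-(C/|x|,0)\|_{\mathcal H_{rad,R'}}$; the channel step controls only the component orthogonal to $(1/r,0)$ and never improves the parallel component. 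In the energy-critical case this closes because $s_p=1$ and the iteration lives in a single space, but here the two norms decouple. The paper bypasses this precisely by the pointwise iteration of Lemma~\ref{induction beta}, which upgrades $|u|\le\varepsilon r^{-2/(p-1)}$ to $|u|\le\varepsilon r^{-\beta}$ with $\beta>3/p$, after which $w(r,t)$ has a genuine limit and $C$ is defined outright. Similarly, your Step~3 ``Gronwall along light cones plus faster-than-polynomial decay'' is not fleshed out; the paper instead shows directly that if two non-radiative solutions share the limit $\lim_{r\to\infty} r u(r,t)$ and satisfy $|u|,|\tilde u|\le C/r$, then iterating the characteristic identity gains a factor $r^{-(p-3)/2}$ at each step and forces $u\equiv\tilde u$ (Proposition~\ref{match two non radiative}), with a separate short continuation argument (Lemma~\ref{match further}) to push the coincidence region from $\{|x|>|t|+R'\}$ all the way down to $\{|x|>|t|+R\}$. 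If you want to salvage your route, you would at minimum need to insert a pointwise decay-improvement step between Steps~1 and~2; at that point the characteristic method is both shorter and sharper.
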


\noindent Since a non-radiative solution to (CP1) is $R$-weakly non-radiative for all $R>0$, we may apply Theorem \ref{main 2} with $R\rightarrow 0^+$ and utilize the fact $U_C^\pm (x,0) \notin \dot{H}^1 (\Rm^3)$ for $C\neq 0$ to obtain

\begin{corollary} \label{rad solution}
 If $u$ is a radial non-radiative solution to (CP1) with initial data $(u_0,u_1) \in (\dot{H}^1(\Rm^3) \cap \dot{H}^{s_p}(\Rm^3)) \times (L^2(\Rm^3)\cap \dot{H}^{s_p-1}(\Rm^3))$, then $u(x,t)\equiv 0$ for all $(x,t)$ with $|x|>|t|$.
\end{corollary}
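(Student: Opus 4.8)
\textbf{Proof proposal for Corollary \ref{rad solution}.}

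The plan is to deduce the corollary from Theorem \ref{main 2} by a limiting argument in $R$ together with the size constraint on $C$. First, suppose $u$ is a radial non-radiative solution to (CP1) with initial data $(u_0,u_1)\in(\dot H^1\cap\dot H^{s_p})(\Rm^3)\times(L^2\cap\dot H^{s_p-1})(\Rm^3)$. By the definition of non-radiative solution, $u$ is an exterior solution defined on $\{(x,t):|x|>|t|\}$ whose energy flux to spatial infinity vanishes as $t\to\pm\infty$; in particular, for every $R>0$ the restriction of $u$ to $\{(x,t):|x|>|t|+R\}$ is an $R$-weakly non-radiative solution to (CP1), and its initial data $(u_0,u_1)$ restricted to $\{|x|>R\}$ lies in $\mathcal H_R^{s_p}$ (indeed it is the restriction of a global $(\dot H^1\cap\dot H^{s_p})\times(L^2\cap\dot H^{s_p-1})$ function). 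So Theorem \ref{main 2} applies for each fixed $R>0$: there is a constant $C_R$ with $u(x,t)\equiv U_{C_R}^{\pm}(x,t)$ on $\{(x,t):|x|>|t|+R\}$.

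Next I would pin down that $C_R$ is independent of $R$ and then that it must be zero. Independence is immediate from uniqueness: if $0<R_1<R_2$ then on the smaller exterior region $\{|x|>|t|+R_2\}$ we have $U_{C_{R_1}}^{\pm}\equiv u\equiv U_{C_{R_2}}^{\pm}$, and since two stationary solutions $U_C^{\pm}$ with distinct $C$ disagree on any exterior region (they have the asymptotics $U_C^{\pm}(x)\simeq C/|x|$, so $C$ is recovered from the solution), we get $C_{R_1}=C_{R_2}=:C$. Thus $u(x,t)\equiv U_C^{\pm}(x,t)$ on the whole region $\{|x|>|t|\}$ for a single constant $C$. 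In the defocusing case Theorem \ref{main 2} furthermore gives $|C|<(R/R_-)^{(p-3)/(p-1)}$ for \emph{every} $R>0$; letting $R\to 0^+$ forces $C=0$, hence $u\equiv 0$ on $\{|x|>|t|\}$ as claimed. In the focusing case the size constraint is absent, so here I would instead use the Sobolev regularity of the data: if $C\neq 0$ then $U_C^{+}(x,0)=U_C^{+}(x)$ agrees with $u_0$ on $\{|x|>R\}$ for all $R>0$, hence $u_0\equiv U_C^{+}$ on $\Rm^3\setminus\{0\}$, but $U_C^{+}\notin\dot H^1(\Rm^3)$ (as recorded in the remark after Proposition \ref{stationary defocusing}, since $U^+\notin\dot H^1$ by the asymptotics together with $U^+\notin\dot H^{s_p}$, and $U_C^+$ is a nonzero rescaling of $U^+$), contradicting $u_0\in\dot H^1(\Rm^3)$. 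Therefore $C=0$ and $u\equiv 0$ on $\{|x|>|t|\}$ in both cases.

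The only point requiring a little care is the passage from ``$u$ restricted to each exterior region is $R$-weakly non-radiative with $\mathcal H_R^{s_p}$ data'' to a legitimate application of Theorem \ref{main 2}: one must check that the non-radiative hypothesis on $\{|x|>|t|\}$ genuinely delivers, for each fixed $R>0$, the two-sided vanishing $\lim_{t\to\pm\infty}\int_{|x|>|t|+R}|\nabla_{x,t}u|^2\,dx=0$, which is clear since the integrand over $\{|x|>|t|+R\}$ is dominated by the integrand over $\{|x|>|t|\}$, and that the restricted initial data indeed sits in $\mathcal H_R^{s_p}$, which holds because $(u_0,u_1)$ is already the restriction to $\{|x|>0\}$ of a global $(\dot H^1\cap\dot H^{s_p})\times(L^2\cap\dot H^{s_p-1})$ function. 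I expect this bookkeeping, rather than any hard analysis, to be the main (very mild) obstacle; everything else is a direct consequence of Theorem \ref{main 2}, the uniqueness of exterior solutions, and the explicit asymptotics and failure of $\dot H^1$-membership of the stationary profiles $U_C^{\pm}$.
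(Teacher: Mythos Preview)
Your proposal is correct and follows essentially the same approach as the paper: apply Theorem~\ref{main 2} for each $R>0$, observe that the resulting constant $C$ is independent of $R$, let $R\to 0^+$, and rule out $C\neq 0$. The only cosmetic difference is that you split into cases---using the size constraint $|C|<(R/R_-)^{(p-3)/(p-1)}$ in the defocusing case and the failure $U_C^+\notin\dot H^1$ in the focusing case---whereas the paper's one-line sketch invokes the single fact $U_C^\pm\notin\dot H^1(\Rm^3)$ for $C\neq 0$ uniformly (this covers defocusing too, since by the remark after Proposition~\ref{stationary defocusing} the blow-up of $U^-$ at $R_-$ prevents it from being the restriction of any radial $\dot H^1$ function).
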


\begin{remark}
Initial data $(u_0,u_1) \in \mathcal{H}_R$ are contained in the space $\mathcal{H}_R^{s_p}$ as long as they coincide with some $\dot{H}^{s_p}\times \dot{H}^{s_p-1}$ data near the infinity. In fact, if $(u_0,u_1) \in \dot{H}^1(\Rm^3) \times L^2(\Rm^3)$ and $(u'_0, u'_1) \in \dot{H}^{s_p}(\Rm^3) \times \dot{H}^{s_p-1}(\Rm^3)$ satisfy\footnote{More precisely, the identity holds in the sense of distribution.}
\[
 (u_0(x), u_1(x))=(u'_0(x), u'_1(x)), \qquad |x|>R_1.
\]
then we may write $(u_0,u_1) = (1-\mathbf{P}_{2R_1})(u_0,u_1) + \mathbf{P}_{2R_1}(u'_0,u'_1)$. Here $\mathbf{P}$ is a center cut-off operator as defined in Lemma \ref{center cutoff}. Since $\mathbf{P_{2R_1}}$ is a bounded operator from $\dot{H}^s$ to itself for $s \in [-1,1]$, we have  $\mathbf{P}_{2R_1}(u'_0,u'_1) \in \dot{H}^{s_p} \times \dot{H}^{s_p-1}$ and $(1-\mathbf{P}_{2R_1})(u_0,u_1) \in \dot{H}^1 \times L^2$. The latter is also compactly supported, thus 
\[
 (1-\mathbf{P}_{2R_1})(u_0,u_1) \in \dot{W}^{1,\frac{3(p-1)}{p+1}} \times L^\frac{3(p-1)}{p+1} \hookrightarrow \dot{H}^{s_p} \times \dot{H}^{s_p-1}.
\]
In summary we have $(u_0,u_1) \in \dot{H}^{s_p} \times \dot{H}^{s_p-1}$.
\end{remark}

\section{Preliminary Results} \label{sec: preliminary}

\subsection{Technical Lemma}

\begin{lemma}[center cut-off operator] \label{center cutoff}
Given a constant $s \in [-1,1]$. Fix a smooth radial cut-off function $\phi: \Rm^3 \rightarrow [0,1]$ satisfying
\[
 \phi(x) = \left\{\begin{array}{ll} 0, & |x|\leq 1/2; \\ 1, & |x|\geq 1. \end{array}\right.
\]
We define an operator $\mathbf{P}_R f = \phi(x/R) f$. Here $R>0$ is a positive constant. Then $\mathbf{P}_R$ is a bounded operator from $\dot{H}^s(\Rm^3)$ to $\dot{H}^s(\Rm^3)$, whose operator norm $\|\mathbf{P}_R\|_{\dot{H}^s \rightarrow \dot{H}^s}$is independent of $R$. In addition, if $f \in \dot{H}^s(\Rm^3)$, then $\|\mathbf{P}_R f\|_{\dot{H}^s(\Rm^3)} \rightarrow 0$ as $R\rightarrow +\infty$.  
\end{lemma}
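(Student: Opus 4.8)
The plan is to treat the two assertions in turn: first the uniform-in-$R$ boundedness, which I would reduce to the single scale $R=1$ (i.e.\ to multiplication by $\phi$) via a dilation, and then the decay $\|\mathbf{P}_R f\|_{\dot H^s}\to 0$, which I would derive from the uniform bound by a routine density argument.

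For the uniform bound I would first record the scaling identity $D_R\circ \mathbf{P}_R = \mathbf{P}_1\circ D_R$, where $D_R g(x)=g(Rx)$; this is immediate since both sides equal $\phi(x)\,g(Rx)$. Since a change of variables on the Fourier side gives $\|D_R g\|_{\dot H^s(\Rm^3)}=R^{s-3/2}\|g\|_{\dot H^s(\Rm^3)}$, these combine to $\|\mathbf{P}_R\|_{\dot H^s\to\dot H^s}=\|\mathbf{P}_1\|_{\dot H^s\to\dot H^s}$ for every $R>0$, so it is enough to bound multiplication by $\phi$. The case $s=0$ is trivial because $0\le\phi\le 1$. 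For $s=1$ I would use Leibniz's rule together with the Sobolev embedding $\dot H^1(\Rm^3)\hookrightarrow L^6(\Rm^3)$: $\|\nabla(\phi f)\|_{L^2}\le \|\nabla f\|_{L^2}+\|\nabla\phi\|_{L^3}\|f\|_{L^6}\lesssim \|f\|_{\dot H^1}$, using that $\nabla\phi$ is smooth and compactly supported, hence in $L^3(\Rm^3)$. For $s\in(0,1)$ I would interpolate between the $s=0$ and $s=1$ bounds (complex interpolation gives $[L^2,\dot H^1]_\theta=\dot H^\theta$), and for $s\in[-1,0)$ I would pass to the adjoint: since $\phi$ is real, $\langle \phi f,g\rangle=\langle f,\phi g\rangle$ in the $\dot H^s$--$\dot H^{-s}$ duality, so $\|\mathbf{P}_1\|_{\dot H^s\to\dot H^s}=\|\mathbf{P}_1\|_{\dot H^{-s}\to\dot H^{-s}}$, reducing to the range already handled. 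This produces a bound $M$ independent of both $R$ and of $s\in[-1,1]$.

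For the decay I would use that $C_c^\infty(\Rm^3)$ is dense in $\dot H^s(\Rm^3)$ for every $s\in[-1,1]$ (the relevant range lies in $(-3/2,3/2)$). If $g\in C_c^\infty(\Rm^3)$ is supported in $\{|x|\le A\}$, then $\phi(x/R)$ vanishes on $\mathrm{supp}\,g$ once $R>2A$, so $\mathbf{P}_R g\equiv 0$ for $R$ large. Given $f\in\dot H^s$ and $\eps>0$, I would choose such a $g$ with $\|f-g\|_{\dot H^s}<\eps$ and use the uniform bound: $\|\mathbf{P}_R f\|_{\dot H^s}\le M\|f-g\|_{\dot H^s}+\|\mathbf{P}_R g\|_{\dot H^s}\le M\eps$ for $R$ large, and letting $\eps\to 0$ gives $\|\mathbf{P}_R f\|_{\dot H^s}\to 0$.

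The only step carrying real content is the uniform-in-$R$ bound, and the dilation identity is exactly what makes it go through; after that the endpoints $s\in\{0,1\}$ are elementary and the rest is interpolation and duality, entering dimension $3$ only through the embedding $\dot H^1\hookrightarrow L^6$. The decay statement is then soft. The points needing a little care are the usual technicalities of homogeneous Sobolev spaces on $\Rm^3$ in this exponent range: that $\mathbf{P}_R$ genuinely acts on $\dot H^s$ for negative $s$, that the $\dot H^s$--$\dot H^{-s}$ pairing and the interpolation $[L^2,\dot H^1]_\theta=\dot H^\theta$ are available, and that $C_c^\infty(\Rm^3)$ is dense in $\dot H^s(\Rm^3)$ here, all of which are standard.
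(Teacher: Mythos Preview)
Your proof is correct and follows essentially the same approach as the paper: dilation to reduce to $R=1$, boundedness at the endpoints $s=0$ and $s=1$, interpolation for $s\in(0,1)$, duality for $s\in[-1,0)$, and then the density argument with $C_c^\infty$ for the decay. You supply a bit more detail at $s=1$ (Leibniz plus $\dot H^1\hookrightarrow L^6$) than the paper, which just calls it a ``basic calculation'', but the structure is identical.
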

\begin{proof}
 We first show $\mathbf{P}_R$ is a bounded operator. By dilation it suffices to consider the case $R=1$. A basic calculation shows that $\mathbf{P}_1$ is a bounded operator from $\dot{H}^1$ to itself, and from $L^2$ to itself. An interpolation then gives the boundedness for all $s \in [0,1]$. By duality the operator $\mathbf{P}_1$ is also bounded for $s \in [-1,0]$. Next we show the limit $\|\mathbf{P}_R f\|_{\dot{H}^s} \rightarrow 0$ as $R\rightarrow +\infty$. By the uniform boundedness of $\mathbf{P}_R$, it suffices to show that this limit holds for $f$ in a dense subset of $\dot{H}^s$. Now we may finish the proof by observing that this limit clearly holds for $f \in C_0^\infty (\Rm^3)$. 
\end{proof}
\begin{lemma}[See Lemma 3.2 of \cite{km}] \label{radial estimate}
 Let $u \in \dot{H}^s(\Rm^3)$ be a radial function, $1/2<s<3/2$. Then we have the following pointwise estimate
\[
 |u(x)| \lesssim_s \frac{\|u\|_{\dot{H}^s(\Rm^3)}}{|x|^{3/2-s}}.  
\]
\end{lemma}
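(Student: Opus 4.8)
}
The plan is to pass to the Fourier side and exploit the explicit formula for the Fourier transform of a radial function on $\Rm^3$, after first reducing to the case $|x|=1$ by scaling. For a radial function $v$, set $v_r(x) = v(rx)$; a change of variables gives $\|v_r\|_{\dot{H}^s(\Rm^3)} = r^{s-3/2}\|v\|_{\dot{H}^s(\Rm^3)}$, while by radial symmetry $|v_r(x)|$ at $|x|=1$ equals the value of $|v|$ at radius $r$. Hence, once we establish $|v(x)| \le C_s \|v\|_{\dot{H}^s(\Rm^3)}$ for every radial $v$ and every $x$ with $|x|=1$, applying this to $v_r$ yields the stated inequality at radius $r$ for all $r>0$. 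Moreover, since $1/2<s<3/2$, the homogeneous space $\dot{H}^s(\Rm^3)$ is a genuine function space in which the radial Schwartz functions are dense, so by the a priori bound we are about to prove (which gives convergence uniformly on compact subsets of $\Rm^3\setminus\{0\}$, and hence a continuous limit there) it suffices to treat radial Schwartz $u$.

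For such $u$, write $r=|x|$ and $\rho=|\xi|$. Using $\int_{S^2} e^{i x\cdot\xi}\,d\sigma(\xi) = 4\pi\,\frac{\sin(r\rho)}{r\rho}$, the Fourier inversion formula becomes the radial representation
\[
 u(x) = \frac{c}{r}\int_0^\infty \widehat{u}(\rho)\,\sin(r\rho)\,\rho\,d\rho,
\]
while Plancherel in polar coordinates gives $\int_0^\infty |\widehat{u}(\rho)|^2\,\rho^{2s+2}\,d\rho = c'\,\|u\|_{\dot{H}^s(\Rm^3)}^2$. Setting $r=1$ and using $|\sin\rho|\le\min(\rho,1)$ we obtain
\[
 |u(x)| \le c\int_0^\infty |\widehat{u}(\rho)|\,\min(\rho,1)\,\rho\,d\rho.
\]

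The main step is to split this integral at $\rho=1$ and apply the Cauchy--Schwarz inequality against the weight $\rho^{s+1}$ that appears in the $\dot H^s$ norm. On $\{\rho\le1\}$, where $\min(\rho,1)=\rho$, we write $|\widehat u(\rho)|\rho^2 = (|\widehat u(\rho)|\rho^{s+1})\,\rho^{1-s}$ and bound the contribution by $\big(\int_0^1 |\widehat u|^2\rho^{2s+2}\,d\rho\big)^{1/2}\big(\int_0^1 \rho^{2-2s}\,d\rho\big)^{1/2}$, where the second factor is finite precisely because $s<3/2$. On $\{\rho\ge1\}$, where $\min(\rho,1)=1$, we write $|\widehat u(\rho)|\rho = (|\widehat u(\rho)|\rho^{s+1})\,\rho^{-s}$ and bound the contribution by $\big(\int_1^\infty |\widehat u|^2\rho^{2s+2}\,d\rho\big)^{1/2}\big(\int_1^\infty \rho^{-2s}\,d\rho\big)^{1/2}$, where the second factor is finite precisely because $s>1/2$. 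Adding the two pieces and invoking the Plancherel identity gives $|u(x)| \lesssim_s \|u\|_{\dot H^s(\Rm^3)}$ for $|x|=1$, and undoing the scaling finishes the proof.

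I expect the only genuine points to watch are the justification of the reduction to Schwartz data in the \emph{homogeneous} space $\dot H^s(\Rm^3)$ together with the passage of the pointwise bound to the limit, and the bookkeeping that makes the two endpoint restrictions $s>1/2$ and $s<3/2$ emerge exactly as the convergence conditions for $\int_1^\infty \rho^{-2s}\,d\rho$ and $\int_0^1 \rho^{2-2s}\,d\rho$. It is essential here to use the oscillatory bound $|\sin(r\rho)|\le\min(r\rho,1)$ rather than merely $|\sin|\le1$: without the low-frequency gain the corresponding Cauchy--Schwarz factor would diverge. Everything else is a routine computation.
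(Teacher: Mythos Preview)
Your proof is correct and self-contained. Note, however, that the paper does not actually supply its own proof of this lemma: it is quoted verbatim as Lemma~3.2 of Kenig--Merle \cite{km} and used as a black box. So there is no ``paper's proof'' to compare against beyond the citation.

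Your argument---reduce to $|x|=1$ by scaling, use the explicit radial Fourier inversion $u(x)=\frac{c}{r}\int_0^\infty \widehat u(\rho)\sin(r\rho)\,\rho\,d\rho$, bound $|\sin\rho|\le\min(\rho,1)$, split at $\rho=1$, and Cauchy--Schwarz against the $\dot H^s$ weight---is the standard direct route and is carried out accurately. The two convergence conditions $\int_0^1\rho^{2-2s}\,d\rho<\infty$ and $\int_1^\infty\rho^{-2s}\,d\rho<\infty$ match the hypotheses $s<3/2$ and $s>1/2$ exactly as you say, and your remark that the low-frequency gain from $|\sin\rho|\le\rho$ is essential is on point. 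The density/limit justification you flag is indeed the only soft point, and your handling of it (radial Schwartz functions are dense in $\dot H^s$ for $s$ in this range, and the a priori bound gives locally uniform convergence away from the origin) is adequate.
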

\begin{lemma}\label{radial estimate outside} 
 If $u \in \dot{H}^1(\Rm^3)$ be a radial function. Then we have
 \[
  |u(x)| \lesssim |x|^{-1/2} \left(\frac{1}{4\pi}\int_{|y|>|x|} |\nabla u(y)|^2 dy\right)^{1/2}. 
 \]
\end{lemma}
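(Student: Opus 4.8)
The plan is to reduce to smooth compactly supported radial functions, prove the bound there by the fundamental theorem of calculus together with the Cauchy--Schwarz inequality, and then recover the general case by density. So first I would fix a radial $u\in C_0^\infty(\Rm^3)$ and write $v(r)$ for its radial profile, a smooth function of $r\ge 0$ that vanishes for $r$ large. Then $v(r)=-\int_r^\infty v'(\rho)\,d\rho$, and writing $|v'(\rho)|=\big(|v'(\rho)|\rho\big)\rho^{-1}$ and applying Cauchy--Schwarz on the interval $(r,\infty)$ gives $|v(r)|^2\le\big(\int_r^\infty\rho^{-2}\,d\rho\big)\big(\int_r^\infty|v'(\rho)|^2\rho^2\,d\rho\big)$. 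The first factor equals $1/r$ and the second equals $\tfrac{1}{4\pi}\int_{|y|>r}|\nabla u(y)|^2\,dy$, since $|\nabla u(x)|=|v'(|x|)|$ for radial $u$; this is precisely the asserted inequality, in fact with implicit constant $1$.

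Second, for an arbitrary radial $u\in\dot H^1(\Rm^3)$ I would choose radial $u_n\in C_0^\infty(\Rm^3)$ with $u_n\to u$ in $\dot H^1$, which exists since one may approximate $u$ in $\dot H^1$ by $C_0^\infty(\Rm^3)$ functions and then average over rotations (averaging does not increase the $\dot H^1$ norm). For each fixed $x\neq 0$, Lemma \ref{radial estimate} applied with $s=1$ to the differences $u_n-u_m$ shows that $\big(u_n(x)\big)_n$ is Cauchy, and its limit is the representative of $u$ that is continuous away from the origin — the representative to which both sides of the stated estimate implicitly refer. Letting $n\to\infty$ in the inequality from the first step, and using $\int_{|y|>|x|}|\nabla u_n|^2\,dy\to\int_{|y|>|x|}|\nabla u|^2\,dy$, yields the claim.

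The only obstacle here is bookkeeping rather than analysis: one must check that the pointwise value of a radial $\dot H^1$ function is well defined, and that the approximating sequence can be taken both radial and compactly supported. These two points are handled, respectively, by Lemma \ref{radial estimate} and by rotational averaging, so there is no genuine difficulty beyond that.
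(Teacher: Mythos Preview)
Your argument is correct and matches the paper's proof essentially line for line: the paper also writes $u(x)=\int_{|x|}^\infty u_r(r)\,dr$ for smooth compactly supported $u$, applies Cauchy--Schwarz with the weight $r^{-2}$, and then says ``a standard smooth approximation and cut-off technique then deals with the general case.'' Your additional care about the approximation step (rotational averaging to keep radiality, Lemma~\ref{radial estimate} to make sense of pointwise values) simply spells out what the paper leaves implicit.
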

\begin{proof}
 If $u$ is smooth and compactly supported, we have 
\begin{align*}
 |u(x)| = \left|\int_{|x|}^\infty u_r(r) dr \right| & \leq \left(\int_{|x|}^\infty \frac{1}{r^2} dr\right)^{1/2}\left(\int_{|x|}^\infty r^2 |u_r(r)|^2 dr\right)^{1/2} \\
 & = |x|^{-1/2} \left(\frac{1}{4\pi}\int_{|y|>|x|} |\nabla u(y)|^2 dy\right)^{1/2}
\end{align*}
A standard smooth approximation and cut-off technique then deals with the general case. 
\end{proof}
\begin{lemma} \label{uw relationship}
 Let $u \in \dot{H}^1(\Rm^3)$ be radial. Then given any $R>0$, the one-variable function $w(r) = ru(r)$ satisfies
 \[
  \int_{R}^\infty |w_r(r)|^2 dr = \frac{1}{4\pi} \int_{|x|>R} |\nabla u(x)|^2 dx - R |u(R)|^2. 
 \]
\end{lemma}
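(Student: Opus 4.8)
The plan is to reduce the statement to a one-dimensional integration by parts and then extend from smooth data by density. For a radial $u$ we have $|\nabla u(x)| = |u_r(r)|$ with $r = |x|$, so passing to polar coordinates gives $\frac{1}{4\pi}\int_{|x|>R}|\nabla u(x)|^2\,dx = \int_R^\infty r^2 |u_r(r)|^2\,dr$. On the one-variable side, with $w(r) = r u(r)$ we have $w_r = u + r u_r$, hence
\[
 |w_r|^2 = u^2 + 2 r u u_r + r^2 u_r^2 = \frac{d}{dr}\!\bigl(r u^2\bigr) + r^2 u_r^2 ,
\]
using $\frac{d}{dr}(r u^2) = u^2 + 2 r u u_r$. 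Integrating over $(R,\infty)$ and sending the exact-derivative term to the endpoints gives
\[
 \int_R^\infty |w_r(r)|^2\,dr = \lim_{r\to\infty} r u(r)^2 - R\,u(R)^2 + \int_R^\infty r^2 |u_r(r)|^2\,dr ,
\]
which is the claimed identity provided $\lim_{r\to\infty} r u(r)^2 = 0$.

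To justify the computation I would first establish the identity for radial $u \in C_0^\infty(\Rm^3)$, where every integration by parts is legitimate and the boundary contribution at infinity is literally zero by compact support. For a general radial $u \in \dot{H}^1(\Rm^3)$ I would pick radial $u_n \in C_0^\infty(\Rm^3)$ with $u_n \to u$ in $\dot H^1$. Applying the smooth case to the difference $u_n - u_m$ yields
\[
 \int_R^\infty \bigl|(w_n)_r - (w_m)_r\bigr|^2\,dr = \frac{1}{4\pi}\int_{|x|>R}\!\bigl|\nabla(u_n-u_m)\bigr|^2\,dx - R\,|u_n(R)-u_m(R)|^2 ,
\]
and since $u_n$ is Cauchy in $\dot H^1$ while $u_n(R)$ is Cauchy by Lemma~\ref{radial estimate outside} (applied to $u_n - u_m$), the sequence $(w_n)_r$ is Cauchy in $L^2(R,\infty)$; its limit must be $w_r$ because $w_n \to w$ pointwise (indeed locally uniformly on $[R,\infty)$, again by the radial pointwise estimate). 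Passing to the limit in the smooth-case identity, using also $u_n(R) \to u(R)$ and $\int_{|x|>R}|\nabla u_n|^2 \to \int_{|x|>R}|\nabla u|^2$, gives the identity for $u$.

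I do not expect a serious obstacle: the only point beyond bookkeeping is that $r u(r)^2 \to 0$, and this is exactly Lemma~\ref{radial estimate outside}, which gives $r\,|u(r)|^2 \lesssim \int_{|y|>r}|\nabla u(y)|^2\,dy$, the tail of an integrable function, hence tending to $0$. The same lemma also supplies the continuity of $u \mapsto u(R)$ on radial $\dot H^1$ functions that the density argument needs, so the whole proof is essentially an accounting identity resting on an estimate already isolated above.
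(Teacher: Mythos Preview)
Your proof is correct and follows essentially the same route as the paper: expand $|w_r|^2 = r^2 u_r^2 + \partial_r(ru^2)$, integrate, and use that $r|u(r)|^2 \to 0$ as $r\to\infty$. The only cosmetic differences are that the paper integrates over $[R,R']$ and sends $R'\to\infty$ rather than running a separate density argument, and it cites an external lemma for the vanishing of $r|u(r)|^2$ whereas you (nicely) extract it from Lemma~\ref{radial estimate outside}.
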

\begin{proof}
We may apply the identity $w_r(r) = r u_r(r) + u(r)$ and calculate
\begin{align*}
 \int_{R}^{R'} |w_r(r)|^2 dr & = \int_R^{R'} \left(r^2 u_r + 2r u_r u + u^2\right) dr \\
 & = \int_R^{R'} |u_r|^2 r^2 dr + \int_R^{R'} \partial_r (ru^2) dr \\
 & = \frac{1}{4\pi} \int_{R<|x|<R'} |\nabla u(x)|^2 dx + R' |u(R')|^2 - R |u(R)|^2.
\end{align*}
Finally we make $R'\rightarrow +\infty$ and finish the proof. Here we need to use the following fact(see Lemma A.7 of \cite{shen2}): If $u$ is a radial $\dot{H}^1$ function, then $R' |u(R')|^2\rightarrow 0$ as $R'\rightarrow \infty$. 
\end{proof}
\noindent This immediately gives
\begin{corollary} \label{w non radiative}
 Let $u$ be a radial $R$-weakly non-radiative solution to (CP1). Then the function $w(r,t) = ru(r,t)$ satisfies
\[
 \lim_{t\rightarrow \pm \infty} \int_{|t|+R}^\infty (|w_r(r,t)|^2 + |w_t(r,t)|^2) dr = 0.
\] 
\end{corollary}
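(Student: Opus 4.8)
The plan is to obtain the statement directly from Lemma \ref{uw relationship}, applied at the moving radius $|t|+R$, together with the definition of an $R$-weakly non-radiative solution. First I would use Remark \ref{exterior is restriction} to replace the exterior solution $u$ by an honest profile: there is a function $\tilde u$, defined on all of $\Rm^3\times(-T_-,T_+)$, with $\tilde u(\cdot,t)\in\dot H^1(\Rm^3)$ for every $t$ and $\tilde u(x,t)=u(x,t)$ whenever $|x|>|t|+R$. Since $u$ is radial we may take $\tilde u$ radial, and then the one-variable function $\tilde w(r,t)=r\tilde u(r,t)$ agrees with $w(r,t)=ru(r,t)$ on the whole range of integration $r>|t|+R$. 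Thus it suffices to control $\tilde w$, and for this we are entitled to use Lemmas \ref{uw relationship} and \ref{radial estimate outside}, which are stated for genuine radial $\dot H^1(\Rm^3)$ functions.

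The time-derivative term is immediate, since $\tilde w_t(r,t)=r\tilde u_t(r,t)$ and therefore
\[
 \int_{|t|+R}^\infty |w_t(r,t)|^2\,dr = \frac{1}{4\pi}\int_{|x|>|t|+R}|u_t(x,t)|^2\,dx \le \frac{1}{4\pi}\int_{|x|>|t|+R}|\nabla_{x,t}u(x,t)|^2\,dx.
\]
For the radial-derivative term I would apply Lemma \ref{uw relationship} to $\tilde u(\cdot,t)$ with the radius $|t|+R$, which is positive once $|t|$ is large (the only regime relevant to the limit), to get
\[
 \int_{|t|+R}^\infty |w_r(r,t)|^2\,dr = \frac{1}{4\pi}\int_{|x|>|t|+R}|\nabla u(x,t)|^2\,dx - (|t|+R)\,|u(|t|+R,t)|^2 \le \frac{1}{4\pi}\int_{|x|>|t|+R}|\nabla u(x,t)|^2\,dx,
\]
where I simply discard the non-positive boundary term $-(|t|+R)|u(|t|+R,t)|^2$. (If one preferred a two-sided control of that term one could bound it by $\lesssim \int_{|x|>|t|+R}|\nabla u(x,t)|^2\,dx$ via Lemma \ref{radial estimate outside}, but this is not needed.) Adding the two displays yields
\[
 0 \le \int_{|t|+R}^\infty \big(|w_r(r,t)|^2 + |w_t(r,t)|^2\big)\,dr \le \frac{1}{4\pi}\int_{|x|>|t|+R}|\nabla_{x,t}u(x,t)|^2\,dx,
\]
and letting $t\rightarrow\pm\infty$ the right-hand side tends to $0$ by the defining property of an $R$-weakly non-radiative solution; this gives the claim.

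The only step requiring any attention is the reduction to a true radial $\dot H^1$ profile so that the earlier one-dimensional lemmas apply along the light cone $r=|t|+R$; once Remark \ref{exterior is restriction} is invoked there is no genuine obstacle, and the corollary is essentially a restatement of Lemma \ref{uw relationship} with $R$ replaced by $|t|+R$.
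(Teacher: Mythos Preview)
Your argument is correct and is exactly what the paper has in mind: the corollary is obtained by applying Lemma \ref{uw relationship} at the moving radius $|t|+R$ (using Remark \ref{exterior is restriction} to justify that $u(\cdot,t)$ extends to a radial $\dot H^1$ function), dropping the nonpositive boundary term, and invoking the non-radiative assumption. The paper simply writes ``This immediately gives'' after Lemma \ref{uw relationship}, and you have fleshed out precisely that implication.
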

\subsection{Local theory in critical Sobolev spaces}

\begin{proposition} [Generalized Strichartz estimates, see \cite{strichartz}] Let $2\leq q_1,q_2 \leq \infty$, $2\leq r_1,r_2 < \infty$ and $\rho_1,\rho_2,s\in \Rm$ be constants with 
\begin{align*}
 &1/q_i+1/r_i \leq 1/2, \; i=1,2; & &1/q_1+3/r_1=3/2-s+\rho_1;& &1/q_2+3/r_2=1/2+s+\rho_2.&
\end{align*}
Assume that $u$ is the solution to the linear wave equation
\[
 \left\{\begin{array}{ll} \partial_t u - \Delta u = F(x,t), & (x,t) \in \Rm^3 \times [0,T];\\
 u|_{t=0} = u_0 \in \dot{H}^s(\Rm^3); & \\
 \partial_t u|_{t=0} = u_1 \in \dot{H}^{s-1}(\Rm^3). &
 \end{array}\right.
\]
Then we have
\begin{align*}
 \left\|\left(u(\cdot,T), \partial_t u(\cdot,T)\right)\right\|_{\dot{H}^s \times \dot{H}^{s-1}} & +\|D_x^{\rho_1} u\|_{L^{q_1} L^{r_1}([0,T]\times \Rm^3)} \\
 & \leq C\left(\left\|(u_0,u_1)\right\|_{\dot{H}^s \times \dot{H}^{s-1}} + \left\|D_x^{-\rho_2} F(x,t) \right\|_{L^{\bar{q}_2} L^{\bar{r}_2} ([0,T]\times \Rm^3)}\right).
\end{align*}
Here the coefficients $\bar{q}_2$ and $\bar{r}_2$ satisfy $1/q_2 + 1/\bar{q}_2 = 1$, $1/r_2 + 1/\bar{r}_2 = 1$. The constant $C$ does not depend on $T$ or $u$. 
\end{proposition}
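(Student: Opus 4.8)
The plan is to reduce the Proposition to a single classical ingredient --- the \emph{homogeneous} Strichartz estimate for the half-wave propagators --- together with the Christ--Kiselev lemma, and to handle everything else by duality and a $TT^*$ computation. Writing the solution by Duhamel's formula,
\[
 u(t) = \cos(t\sqrt{-\Delta})\,u_0 + \frac{\sin(t\sqrt{-\Delta})}{\sqrt{-\Delta}}\,u_1 + \int_0^t \frac{\sin\big((t-\tau)\sqrt{-\Delta}\big)}{\sqrt{-\Delta}}\,F(\tau)\,d\tau ,
\]
each quantity on the left-hand side of the claimed inequality splits into a part depending only on $(u_0,u_1)$ and a part depending only on $F$. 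So I would first isolate the one analytic statement actually needed:
\[
 \big\|D_x^{\rho}\,e^{\pm it\sqrt{-\Delta}} g\big\|_{L^q L^r([0,T]\times\Rm^3)} \le C\,\|g\|_{\dot H^{\sigma}(\Rm^3)}, \qquad \tfrac1q+\tfrac3r = \tfrac32-\sigma+\rho ,
\]
valid whenever $2\le q\le\infty$, $2\le r<\infty$, $\tfrac1q+\tfrac1r\le\tfrac12$, with $C$ independent of $T$. Note that in dimension $3$ the requirement $r<\infty$ together with $\tfrac1q+\tfrac1r\le\tfrac12$ already forces $q>2$, so the delicate Keel--Tao endpoint is never touched.

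Second, I would prove this homogeneous estimate in the standard way. Using the dilation symmetry of the half-wave flow and a Littlewood--Paley decomposition --- legitimate to resum because $r<\infty$ --- reduce to data with Fourier support in the unit annulus, where $D_x^\rho$ and $|\xi|^{-1}$ are harmless bounded multipliers that only shift the exponent relation, so one may take $\rho=0$, $\sigma=0$. On a unit frequency block combine $L^2$-conservation $\|e^{\pm it\sqrt{-\Delta}}P_1 g\|_{L^2}=\|P_1 g\|_{L^2}$ with the stationary-phase dispersive bound $\|e^{\pm it\sqrt{-\Delta}}P_1 g\|_{L^\infty}\lesssim (1+|t|)^{-1}\|g\|_{L^1}$; interpolation gives $\|e^{\pm it\sqrt{-\Delta}}P_1 g\|_{L^r}\lesssim (1+|t|)^{-(1-2/r)}\|g\|_{L^{r'}}$. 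Then $\|e^{\pm it\sqrt{-\Delta}}P_1 g\|_{L^q L^r}^2$ is controlled by the bilinear form $\iint \big\langle e^{\pm i(t-s)\sqrt{-\Delta}}P_1 G(s),\,G(t)\big\rangle\,ds\,dt$, which by the dispersive bound and the one-dimensional Hardy--Littlewood--Sobolev inequality is $\lesssim \|G\|_{L^{q'}L^{r'}}^2$ exactly when $\tfrac1q+\tfrac1r\le\tfrac12$ and $q>2$ (the case $q=\infty$ being immediate from Sobolev embedding in $x$). Undoing the rescaling and summing the square function, again using $r<\infty$, yields the displayed estimate with a $T$-independent constant. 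If one prefers not to peel off $\rho$ by hand, the same conclusion over the whole generalized range at once follows from Stein's analytic interpolation applied to a complex family $|D|^{z}e^{it\sqrt{-\Delta}}$, which is the route of Ginibre--Velo.

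Third, I would assemble the full inequality. The terms $D_x^{\rho_1}\cos(t\sqrt{-\Delta})u_0$ and $D_x^{\rho_1}\frac{\sin(t\sqrt{-\Delta})}{\sqrt{-\Delta}}u_1$ are bounded in $L^{q_1}L^{r_1}$ directly by the homogeneous estimate with $(\sigma,\rho,q,r)=(s,\rho_1,q_1,r_1)$, the extra $|\xi|^{-1}$ being absorbed into the exponent relation (this is the first scaling hypothesis). For the energy quantity $\|(u(T),\partial_t u(T))\|_{\dot H^s\times\dot H^{s-1}}$ with zero data, I would argue by duality: testing against $h$ and using self-adjointness of the Fourier multipliers turns $\int_0^T\!\!\int \overline{h}\,(\text{Duhamel term})\,dx\,d\tau$ into $\int_0^T\!\!\int \overline{\psi(\tau)}\,F(\tau)\,dx\,d\tau$, where $\psi$ is the free wave with terminal data $(0,-h)$ (resp. $(h,0)$) at $\tau=T$, lying in $\dot H^{1-s}\times\dot H^{-s}$ with conserved norm $\|h\|$; H\"older in $\tau$ and the homogeneous estimate applied to $\psi$ with $\sigma=1-s$ close the bound, and one checks $\tfrac1{q_2}+\tfrac3{r_2}=\tfrac32-(1-s)+\rho_2=\tfrac12+s+\rho_2$ --- precisely the second hypothesis. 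Finally, for the retarded term $D_x^{\rho_1}\int_0^t\frac{\sin((t-\tau)\sqrt{-\Delta})}{\sqrt{-\Delta}}F(\tau)\,d\tau$ in $L^{q_1}L^{r_1}$: first bound the \emph{untruncated} operator $\int_\Rm$ by combining the homogeneous estimate and its adjoint, writing $\sin$ as a combination of $e^{\pm i(t-\tau)\sqrt{-\Delta}}=e^{\pm it\sqrt{-\Delta}}e^{\mp i\tau\sqrt{-\Delta}}$ so that the bilinear form factors; then invoke the Christ--Kiselev lemma to replace $\int_\Rm$ by $\int_0^t$. That step needs $q_1>\bar q_2=q_2'$, i.e. $\tfrac1{q_1}+\tfrac1{q_2}<1$, which holds since $q_1,q_2\ge 2$ and the equality case $q_1=q_2=2$ is excluded (again $q_i=2$ would force $r_i=\infty$). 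The main technical obstacle is the homogeneous estimate of the second step --- in particular making the Littlewood--Paley resummation and the $TT^*$/Hardy--Littlewood--Sobolev computation uniform across the full admissible-plus-shift range --- while the remaining work is exponent bookkeeping plus two applications of standard lemmas.
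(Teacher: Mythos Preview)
The paper does not prove this proposition at all: it is stated as a quotation of the Ginibre--Velo result \cite{strichartz} and used as a black box, so there is no ``paper's own proof'' to compare against. Your proposal is a correct and self-contained sketch of the standard modern proof of these estimates --- Duhamel reduction, the homogeneous half-wave estimate via Littlewood--Paley localization, the unit-frequency dispersive bound, a $TT^*$/Hardy--Littlewood--Sobolev argument, and then assembly by duality plus Christ--Kiselev --- and your observation that the hypotheses $2\le r_i<\infty$ with $\tfrac1{q_i}+\tfrac1{r_i}\le\tfrac12$ force $q_i>2$ (so neither the Keel--Tao endpoint nor the forbidden Christ--Kiselev case $q_1=\bar q_2$ arises) is exactly the point that makes the three-dimensional case elementary. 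The original Ginibre--Velo paper organizes the argument somewhat differently (complex interpolation of an analytic family rather than Littlewood--Paley square functions), but the analytic content is the same; either route would suffice here.
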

 
\noindent Combining suitable Strichartz estimates with a fixed-point argument, we have the following scattering theory with small data in the critical Sobolev space.
 
 \begin{proposition}[Scattering with small initial data] \label{scattering with small initial data}
 There exists a constant $\delta = \delta(p)>0$, so that if the initial data satisfy $\|(u_0,u_1)\|_{\dot{H}^{s_p} \times \dot{H}^{s_p-1}} < \delta$, then the corresponding solution $u$ to (CP1) exists globally in time and scatters with $\|(u(\cdot,t), u_t(\cdot,t))\|_{\dot{H}^{s_p} \times \dot{H}^{s_p-1}} < 2\delta$. 
\end{proposition}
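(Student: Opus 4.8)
The plan is to run a contraction mapping argument in a single Strichartz norm tailored to the critical regularity $s_p = 3/2 - 2/(p-1)$, and then to sharpen the resulting global bound by using that the free wave propagator $\mathbf{S}_L(t)$ is an isometry on $\dot{H}^{s_p}\times\dot{H}^{s_p-1}(\Rm^3)$. First I would fix a wave-admissible exponent pair $(q,r)$, that is $2\leq q\leq\infty$, $2\leq r<\infty$, $1/q+1/r\leq 1/2$, satisfying the scaling identity $1/q+3/r = 3/2-s_p$; the pair is to be chosen so that the exponents $\bar{q}_2,\bar{r}_2$ determined by $p\bar{q}_2 = q$, $p\bar{r}_2 = r$ are conjugate to an admissible pair $(q_2,r_2)$ with $1/q_2+3/r_2 = 1/2+s_p$. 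A short computation shows that, thanks to the precise value of $s_p$, the two scaling identities are automatically compatible, and that such exponents do exist throughout the range $3<p<5$; carrying out this elementary but genuinely $p$-dependent verification is the one place where subcriticality and the excluded endpoints enter, and I expect it to be the main (if routine) technical checkpoint. With this pair fixed we set $X(I) = L^{q}L^{r}(I\times\Rm^3)$.

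Next I would consider the solution map $\Phi(v) = \mathbf{S}_L(t)(u_0,u_1) + \int_0^t \frac{\sin\left((t-\tau)\sqrt{-\Delta}\right)}{\sqrt{-\Delta}} F(v)\,d\tau$ with $F(v) = \zeta|v|^{p-1}v$, and apply the generalized Strichartz estimate of the previous proposition with $s = s_p$ and $\rho_1 = \rho_2 = 0$. Since no derivative is placed on the nonlinearity, no fractional chain rule is needed: the pointwise bound $|F(v)|\leq|v|^{p}$ together with Hölder's inequality in $t$ and in $x$ gives $\|F(v)\|_{L^{\bar{q}_2}L^{\bar{r}_2}} \leq \|v\|_{X(\Rm)}^{p}$ and the companion estimate $\|F(v)-F(\tilde{v})\|_{L^{\bar{q}_2}L^{\bar{r}_2}} \lesssim \left(\|v\|_{X(\Rm)}^{p-1}+\|\tilde{v}\|_{X(\Rm)}^{p-1}\right)\|v-\tilde{v}\|_{X(\Rm)}$. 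Hence the Strichartz estimate yields $\|\Phi(v)\|_{X(\Rm)} \leq C_0\|(u_0,u_1)\|_{\dot{H}^{s_p}\times\dot{H}^{s_p-1}} + C_1\|v\|_{X(\Rm)}^{p}$, and a matching Lipschitz estimate for $\Phi(v)-\Phi(\tilde{v})$. Choosing $\delta>0$ small enough, $\Phi$ is then a contraction on the closed ball $\{v\in X(\Rm): \|v\|_{X(\Rm)}\leq 2C_0\delta\}$, and its unique fixed point $u$ is a global-in-time solution to (CP1) with $\|u\|_{X(\Rm)}\leq 2C_0\delta$. That $u$ is genuinely a solution in the relevant Duhamel sense, and that it is the unique such solution, then follows from a standard open–closed argument on the set of times where two solutions coincide, built on the same local contraction.

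Finally, to obtain the quantitative bound at an arbitrary time $t$ I would apply the Strichartz estimate once more, this time retaining the term $(u(\cdot,t),u_t(\cdot,t))$ on the left. Writing $u(\cdot,t) = \mathbf{S}_L(t)(u_0,u_1) + (\text{Duhamel term})$ and using that $\|\mathbf{S}_L(t)(u_0,u_1)\|_{\dot{H}^{s_p}\times\dot{H}^{s_p-1}} = \|(u_0,u_1)\|_{\dot{H}^{s_p}\times\dot{H}^{s_p-1}} < \delta$ exactly, while the Duhamel term is controlled by a constant multiple of $\|u\|_{X(\Rm)}^{p}\leq (2C_0\delta)^{p}$, we obtain $\|(u(\cdot,t),u_t(\cdot,t))\|_{\dot{H}^{s_p}\times\dot{H}^{s_p-1}} < \delta + C(2C_0\delta)^{p}$, which is below $2\delta$ after shrinking $\delta$ once more (since $p>1$). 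Scattering is then immediate: the finiteness of $\|u\|_{X(\Rm)}$ forces $\|u\|_{X([T_1,T_2])}\to 0$ as $T_1,T_2\to+\infty$, so applying the Strichartz bound on the interval $[T_1,T_2]$ shows that $\mathbf{S}_L(-t)(u(\cdot,t),u_t(\cdot,t))$ is Cauchy in $\dot{H}^{s_p}\times\dot{H}^{s_p-1}$ as $t\to+\infty$ (and, symmetrically, as $t\to-\infty$); the two limits are the scattering data.
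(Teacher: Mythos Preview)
Your proposal is correct and is precisely the standard Strichartz-plus-contraction argument the paper alludes to; the paper itself does not spell out a proof beyond the single sentence ``Combining suitable Strichartz estimates with a fixed-point argument, we have the following scattering theory with small data in the critical Sobolev space.'' Your outline supplies exactly the details one would expect behind that sentence, including the exponent bookkeeping at regularity $s_p$ and the use of the isometry of $\mathbf{S}_L(t)$ to upgrade the a~priori bound to the stated $2\delta$ threshold.
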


\section{Examples of weakly non-radiative solutions} \label{sec: example}
In this section we prove that energy subcritical wave equation in the defocusing case admits weakly non-radiative solutions that are orthogonal to $(r^{-1},0)$ in the energy space, i.e. we prove Theorem \ref{main 1}.
 
\paragraph{Reduction to ODE} The $C^2$ solution we construct is in the form of $u(x,t) = |x|^{-2/(p-1)} f(t/|x|)$ with initial data $(u_0,u_1) = (0,a|x|^{-2/(p-1)-1})$. Here $f$ is a $C^2$ function defined on $(-1,1)$ and $a>0$ is a parameter. Please note $u_1 \notin L^2(\Rm^3)$ but $u_1 \in L^2 (\{x\in \Rm^3: |x|>R\})$ for any $R>0$. We may use polar coordinates and the notation $\beta =2/(p-1)$ to write $u(r,t) = r^{-\beta} f(t/r)$ for convenience. A straightforward calculation shows
\begin{align*}
 u_{tt} & = r^{-\beta-2} f''(t/r); \\
 u_r &  = -\beta r^{-\beta-1} f(t/r) - t r^{-\beta-2} f'(t/r); \\
 u_{rr} & = \beta(\beta+1) r^{-\beta-2} f(t/r) + (2\beta+2) t r^{-\beta-3} f'(t/r) + t^2 r^{-\beta-4} f''(t/r);\\
 \Delta u & = u_{rr} + (2/r) u_r = \beta(\beta-1) r^{-\beta-2} f(t/r) + 2\beta t r^{-\beta-3}f'(t/r) + t^2 r^{-\beta-4} f''(t/r);\\
 |u|^{p-1} u & = r^{-\beta-2} |f(t/r)|^{p-1} f(t/r).
\end{align*}
We plug these in the defocusing wave equation $\partial_t^2 u - \Delta u = -|u|^{p-1}u$ and obtain 
\[
 r^{-\beta-2} \left[\left(1-\frac{t^2}{r^2}\right)f''\left(\frac{t}{r}\right) - 2\beta \cdot \frac{t}{r}f'\left(\frac{t}{r}\right) + \beta(1-\beta)f\left(\frac{t}{r}\right) +\left|f\left(\frac{t}{r}\right)\right|^{p-1} f\left(\frac{t}{r}\right) \right] = 0.
\]
Therefore $f$ satisfies the ordinary differential equation 
 \begin{equation}
  \left\{\begin{array}{ll} (1-x^2) f''(x) - 2 \beta x f'(x) + \beta(1-\beta) f(x) + |f(x)|^{p-1} f(x) = 0, & x \in (-1,1);\\
  f(0) = 0,\; f'(0) = a. & \end{array} \right. \label{ode details 1}
 \end{equation}
Each solution $f$ to \eqref{ode details 1} gives a solution $u(x,t) = |x|^{-2/(p-1)} f(t/|x|) $ to the defocusing wave equation defined on $\{(x,t): |t|<|x|\}$. Some useful properties of solutions to the initial value problem \eqref{ode details 1} are summarized in the following proposition. We postpone its proof until Section \ref{sec: ode1} of this work since it is irrelevant to our main topics. 

\begin{proposition} \label{ode}
 Let $\beta \in (1/2,1)$, $\gamma>0$ and $p >1$ be constants. The solutions to the ordinary differential equation 
 \[
  \left\{\begin{array}{ll} (1-x^2) f''(x) - 2 \beta x f'(x) + \gamma f(x) + |f(x)|^{p-1} f(x) = 0, & x \in (-1,1);\\
  f(0) = 0,\; f'(0) = a; & \end{array} \right.
 \]
 satisfy the following properties 
 \begin{itemize}
  \item[(i)] The solutions $f(x)$ are classic solutions defined for all $x \in (-1,1)$; i.e. $f \in C^2((-1,1))$. 
  \item[(ii)] We have continuous dependence of $f(x)$ on initial value $a$ up to the endpoints, i.e. we may define $f(x)$ at $x = \pm 1$ so that $f(x)$ becomes a continuous function of $(x,a) \in [-1,1]\times \Rm$. In addition, we have a uniform upper bound $|f(x)| \lesssim_{\beta, \gamma, p} |a|$. 
  \item[(iii)] there exists a continuous function $G = G(a)$ so that the behaviour of $f'(x)$ near endpoints is given by
  \[
    \left|f'(x) - G(1-x^2)^{-\beta} - \frac{1}{2\beta} \left[\gamma f(1)+ |f(1)|^{p-1} f(1)\right]\right| \lesssim_{\beta,\gamma,p} (|a|+|a|^{p}) (1-|x|)^{1-\beta}.
  \]
  In addition, we have $G=f(1)=0$ if and only if $a=0$.
  \item[(iv)] There are infinitely many positive initial values $a>0$, so that the solution $f$ satisfies 
  \[
   \sup_{x\in (-1,1)} |f'(x)| < +\infty.
  \]
 \end{itemize}
\end{proposition}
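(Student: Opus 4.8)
The plan is to remove the singularity of the leading coefficient at $x=\pm1$ by rewriting the equation in divergence form. Since $\frac{d}{dx}\big[(1-x^2)^\beta f'\big]=(1-x^2)^{\beta-1}\big[(1-x^2)f''-2\beta x f'\big]$, the ODE is equivalent to $\frac{d}{dx}\big[(1-x^2)^\beta f'\big]=-(1-x^2)^{\beta-1}\big[\gamma f+|f|^{p-1}f\big]$. Setting $g(x)=(1-x^2)^\beta f'(x)$ and using $f(0)=0$, $g(0)=a$, this becomes the integral system
\[
f(x)=\int_0^x(1-s^2)^{-\beta}g(s)\,ds,\qquad g(x)=a-\int_0^x(1-s^2)^{\beta-1}\big[\gamma f(s)+|f(s)|^{p-1}f(s)\big]\,ds,
\]
whose kernels $(1-s^2)^{-\beta}$ and $(1-s^2)^{\beta-1}$ are both integrable on $(-1,1)$ because $1/2<\beta<1$. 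For (i) I would take a unique local $C^2$ solution near the regular point $x=0$ from standard ODE theory, note that the $x\mapsto-x$, $f\mapsto-f$ invariance of the equation together with uniqueness forces $f$ to be odd, and then preclude blow-up before the endpoints with the conserved-type quantity $E(x)=\tfrac12(1-x^2)f'(x)^2+\tfrac{\gamma}{2}f(x)^2+\tfrac1{p+1}|f(x)|^{p+1}$, which obeys $E'(x)=(2\beta-1)\,x\,f'(x)^2$. On $[0,x_0]\subset[0,1)$ one has $f'(x)^2\le 2E(x)/(1-x_0^2)$, so Gronwall gives $E\lesssim_{x_0}a^2$ there; hence $f,f'$ stay bounded on every $[-x_0,x_0]$ and the solution extends to all of $(-1,1)$.

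The key to (ii) is a second energy adapted to $g$. Because $gg'=-(1-x^2)^{2\beta-1}\big[\gamma f+|f|^{p-1}f\big]f'$, the quantity
\[
\tilde E(x)=\tfrac12 g(x)^2+(1-x^2)^{2\beta-1}\Big(\tfrac{\gamma}{2}f(x)^2+\tfrac1{p+1}|f(x)|^{p+1}\Big)
\]
satisfies $\tilde E'(x)=-2(2\beta-1)\,x\,(1-x^2)^{2\beta-2}\big(\tfrac{\gamma}{2}f(x)^2+\tfrac1{p+1}|f(x)|^{p+1}\big)$. Since $\beta>1/2$ this has the sign of $-x$, so $\tilde E$ is maximal at $x=0$; thus $\tfrac12 g(x)^2\le\tilde E(x)\le\tilde E(0)=\tfrac12 a^2$, giving $|g(x)|\le|a|$ on $(-1,1)$ and then $|f(x)|\le|a|\int_0^1(1-s^2)^{-\beta}\,ds\lesssim_\beta|a|$. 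Since $g$ is bounded with $g'\in L^1$ near $x=\pm1$, $g$ (hence $f$, via its integral representation) extends continuously to $[-1,1]$; joint continuity in $(x,a)$ up to the endpoints then follows from a Gronwall estimate for the difference of two solutions of the integral system, subdividing $[-1,1]$ into finitely many subintervals on which the weighted kernels have small $L^1$ norm and using $|f|\lesssim|a|$ to bound the local Lipschitz constant of the nonlinearity.

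For (iii), put $G(a)=g(1)=\lim_{x\to1}(1-x^2)^\beta f'(x)$ (finite by (ii)) and write, for $x$ near $1$, $g(x)=G+\int_x^1(1-s^2)^{\beta-1}\big[\gamma f(s)+|f(s)|^{p-1}f(s)\big]\,ds$. Replacing the bracketed factor by its endpoint value $A:=\gamma f(1)+|f(1)|^{p-1}f(1)$ (with error $\lesssim_a|f(s)-f(1)|\lesssim(|a|+|a|^p)(1-s)^{1-\beta}$), using $\int_x^1(1-s^2)^{\beta-1}\,ds=\tfrac{2^{\beta-1}}{\beta}(1-x)^\beta+O((1-x)^{\beta+1})$, and dividing by $(1-x^2)^\beta$ yields $f'(x)=G(1-x^2)^{-\beta}+\tfrac{A}{2\beta}+O\big((|a|+|a|^p)(1-|x|)^{1-\beta}\big)$; by the evenness of $f'$ the same expansion holds near $x=-1$, which is the claimed estimate, and continuity of $G$ in $a$ comes along with (ii). The equivalence $G=f(1)=0\iff a=0$: $\Leftarrow$ is clear since $f\equiv0$ solves the IVP with $a=0$; for $\Rightarrow$, if $G=f(1)=0$ then the integral system on $[x_0,1]$ with vanishing endpoint data is a contraction once $x_0$ is close enough to $1$ (the two kernels having small $L^1$ norm there), forcing $f\equiv0$ near $x=1$, and backward uniqueness of the ODE from a point where $1-x^2\neq0$ then yields $f\equiv0$, i.e. $a=0$.

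Finally, (iv). Since $f$ is odd, $g$ is even, so $g(-1)=g(1)=G(a)$, and by (iii) $\sup_{(-1,1)}|f'|<\infty$ holds if and only if $G(a)=0$ (if $G(a)\neq0$ then $f'(x)\sim G(a)(1-x^2)^{-\beta}$ diverges at the endpoints). As $G$ is continuous with $G(0)=0$, it suffices to show that $G$ changes sign for arbitrarily large $a$, and this is the step I expect to be the main obstacle. The mechanism is oscillation: for large $a$ the solution develops a first turning point at $x\sim a^{-(p-1)/(p+1)}$, after which, on a fixed interval $[0,1-\delta]$ where $1-x^2\asymp1$, it is governed to leading order by the oscillatory Hamiltonian equation $f''+|f|^{p-1}f\approx0$, whose number of sign changes on that interval grows like $\delta\,a^{(p-1)/(p+1)}\to\infty$. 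One must then propagate this behaviour up to $x=1$ and show that the terminal value $G(a)=g(1;a)$ winds around $0$ infinitely often as $a$ increases, so that the intermediate value theorem supplies infinitely many positive zeros of $G$; the rigorous version requires Sturm-type comparison or WKB-type asymptotics for the rescaled equation and is the technical core of the statement, the other parts being comparatively routine.
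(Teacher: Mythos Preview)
Your treatment of (i)--(iii) matches the paper's almost exactly: the paper uses the same two energies (your $E$ and $\tilde E$, obtained by multiplying the equation by $f'$ and by $(1-x^2)^{2\beta-1}f'$ respectively), the same function $g(x)=(1-x^2)^\beta f'(x)$, and the same Taylor-type expansion of the endpoint integral to get the asymptotic formula for $f'$. The only notable difference is in the implication $G=f(1)=0\Rightarrow a=0$: your contraction argument on the backward integral system is valid, but the paper instead observes that the ``lower'' energy $E(x)=\tfrac12(1-x^2)f'(x)^2+P(f(x))$ is nondecreasing on $[0,1)$, so $E(x)\ge E(0)=a^2/2$; since $G=f(1)=0$ forces $f'(x)\to0$ and $f(x)\to0$, one gets $a=0$ in one line.

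For (iv) you have correctly isolated the difficulty and the right scale $a^{(p-1)/(p+1)}$, but the step you flag as the obstacle --- passing from ``many oscillations of $f(\cdot;a)$ for each large $a$'' to ``many sign changes of $a\mapsto G(a)$'' --- is genuinely missing, and the Sturm/WKB route you suggest is not what the paper does. The paper's idea is cleaner and avoids asymptotics for $G$ altogether: let $N(a)$ be the number of critical points of $f$ in $(0,1)$. First, $N(a)<\infty$ for every $a>0$ (zeros of $f'$ cannot accumulate at $0$ or $1$, the latter by your expansion in (iii) since at least one of $G,f(1)$ is nonzero). Second, $N(a)$ is \emph{locally constant} on $\{a:G(a)\neq0\}$: near such an $a_0$ one freezes a small $\varepsilon>0$ so that $|f'|$ is bounded below on each gap between consecutive critical points, $|f''|$ is bounded below on an $\varepsilon$-neighbourhood of each critical point (with $f'$ changing sign across it), and the condition $(1-(1-\varepsilon)^2)^{-\beta}|G|>C_0\,P'(|a|)$ from (iii) rules out critical points in $[1-\varepsilon,1)$; all of these inequalities persist under perturbation of $a$ by continuous dependence, pinning $N(a)$. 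Third, $N(a)\gtrsim a^{(p-1)/(p+1)}$ for large $a$: using your two energies one shows that any subinterval of $(0,1/2)$ containing neither a zero nor a critical point of $f$ has length $\lesssim a^{-(p-1)/(p+1)}$ (split into the sets $\{|f'|>a/2\}$ and $\{|f|>a^{2/(p+1)}/2\}$ and bound each by integrating $f'$, respectively $g'=-(1-x^2)^{\beta-1}P'(f)$, over the monotone stretch). Since $N(a)$ is integer-valued, locally constant off the zero set of $G$, and tends to $+\infty$, $G$ must vanish for infinitely many $a>0$. This discrete/topological argument is the piece you are missing.
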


\paragraph{Weakly non-radiative solutions} Now let us choose a positive parameter $a$ as in part (iv) and consider the solution $u(x,t) = |x|^{-2/(p-1)} f(t/|x|)$. According to part (ii) of Proposition \ref{ode} we have a uniform upper bound $|u(x,t)| \lesssim |x|^{-2/(p-1)}$ for all $|x|>t$. A simple calculation shows that $u \in X(\Rm) =  L^{\frac{2p}{p-3}}(\Rm; L^{2p}(\{x: |x|>|t|+R\}))$ for any $R>0$. Thus $u$ is always an exterior solution to (CP1) in the exterior region $\{(x,t): |x|>|t|+R\}$. Our choice of initial value $a$ guarantees that both $f$ and $f'$ are bounded, therefore we have the following estimates for any $r=|x|>t$:
\begin{align*}
  |u_r| \lesssim &  r^{-2/(p-1)-1} |f(t/r)| + |t| r^{-2/(p-1)-2} |f'(t/r)| \lesssim r^{-2/(p-1)-1}; \\
  |u_t| \lesssim &  r^{-2/(p-1)-1} |f'(t/r)|  \lesssim r^{-2/(p-1)-1}.
\end{align*}
Thus
\begin{align*}
 \int_{|x|>|t|} \left(|\nabla u(x,t)|^2 + |u_t(x,t)|^2\right) dx & = 4\pi \int_{|t|}^\infty (|u_r(r,t)|^2+|u_t(r,t)|^2) r^2 dr\\
 & \lesssim \int_{|t|}^\infty r^{-4/(p-1)} dr \lesssim |t|^{-\frac{5-p}{p-1}}. 
\end{align*}
This vanishes as $|t|\rightarrow \infty$. As a result, $u$ is an $R$-weakly non-radiative solution to (CP1) for any $R>0$.

\section{Weakly non-radiative solutions in $\dot{H}^{s_p}\times \dot{H}^{s_p-1}$}

In this section we give a proof of Theorem \ref{main 2}. The general idea comes from Duyckaerts-Kenig-Merle\cite{se}, In the author's previous work \cite{shen2} the same idea is used to deal with soliton-like minimal blow-up solutions $v$ obtained via the compactness-rigidity argument, whose trajectory $\{(v(\cdot,t), v_t (\cdot,t)): t\in \Rm\}$ is pre-compact in both spaces $\dot{H}^1\times L^2$ and $\dot{H}^{s_p} \times \dot{H}^{s_p-1}$. A soliton-like minimal blow-up solution is clearly a special case of non-radiative solutions. In this work we improve the argument so that it works for all $R$-weakly non-radiative solution with initial data in $\mathcal{H}_R^{s_p}$.  

\subsection{Asymptotic behaviour of non-radiative solutions}

The lemmata in this subsection describe behaviour of weakly non-radiative radial solutions $u(x,t)$ to (CP1) with initial data in $\mathcal{H}_R^{s_p}$ when $x$ is sufficiently large. 
large. 
\begin{lemma} \label{initial decay}
 Let $u$ be a radial, $R_0$-weakly non-radiative solution to (CP1) with initial data $(u_0,u_1)\in \mathcal{H}_{R_0}^{s_p}$. Then given any $\eps>0$, there exists a large radius $R_* =R_* (\eps,u)>0$, so that the inequality $|u(r,t)| \leq \eps r^{-2/(p-1)}$ holds for all $r>\max\{|t|+R_0, R_*\}$.  
\end{lemma}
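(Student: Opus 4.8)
The plan is to control the solution $u$ in the exterior region through the Duhamel formula \eqref{def of exterior}, exploiting the extra regularity from $(u_0,u_1) \in \mathcal{H}_{R_0}^{s_p}$ together with the weakly non-radiative hypothesis. First I would note that by Corollary \ref{w non radiative} the energy of $u$ in the shifted exterior region $\{|x| > |t| + R_0\}$ tends to $0$ as $|t| \to \infty$; moreover, because the restriction of the data to $\{|x| > R_1\}$ lies in $\dot{H}^{s_p} \times \dot{H}^{s_p-1}$ for $R_1$ large (via the center cut-off operator $\mathbf{P}_{R_1}$ of Lemma \ref{center cutoff}), the critical-Sobolev norm of the data restricted to $\{|x| > R_1\}$ can be made smaller than the small-data threshold $\delta$ of Proposition \ref{scattering with small initial data} by choosing $R_1$ large. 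Thus in the far exterior region the solution behaves like a small-data scattering solution in $\dot{H}^{s_p} \times \dot{H}^{s_p-1}$, and its linear part carries the decay $\|\mathbf{S}_L(t)(u_0,u_1)\|$ in the appropriate Strichartz space.

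Next I would set up the pointwise bound. Using Lemma \ref{radial estimate} with $s = s_p$ (note $1/2 < s_p < 3/2$ since $3 < p < 5$, and $3/2 - s_p = 2/(p-1)$), a radial $\dot{H}^{s_p}$ function satisfies $|g(x)| \lesssim \|g\|_{\dot{H}^{s_p}} |x|^{-2/(p-1)}$. The strategy is to apply this to $\mathbf{P}_{R_1}$-truncated pieces of the solution at each time and show the relevant $\dot{H}^{s_p}$ norm on the exterior shrinks as $R_1 \to \infty$. Concretely, I would write $u = u^{\mathrm{lin}} + u^{\mathrm{nl}}$ where $u^{\mathrm{lin}} = \mathbf{S}_L(t)(u_0,u_1)$ and $u^{\mathrm{nl}}$ is the Duhamel term with source $\chi_{R_0} F(u)$. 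For the linear part, the small-data argument combined with Lemma \ref{radial estimate} gives $|u^{\mathrm{lin}}(r,t)| \lesssim \delta(R_1) r^{-2/(p-1)}$ with $\delta(R_1) \to 0$. For the nonlinear part, I would iterate: using the Strichartz bound on $\chi_{R_0} F(u)$ in $X$-type norms (already controlled by the small-data theory in the exterior), bound $u^{\mathrm{nl}}$ in a mixed space-time norm that embeds into the desired pointwise decay, again via the radial Sobolev inequality applied time-slice by time-slice. The key quantitative input is that $\|(u_0,u_1)\|_{\mathcal{H}_{R_1}^{s_p}} \to 0$ as $R_1 \to \infty$, which follows from the convergence statement in Lemma \ref{center cutoff} plus the compact-support embedding $\dot{W}^{1,3(p-1)/(p+1)} \hookrightarrow \dot{H}^{s_p}$ used in the remark after Corollary \ref{rad solution}.

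The main obstacle I anticipate is handling the finite-speed-of-propagation bookkeeping carefully: the solution is only defined as an \emph{exterior} solution on $\{|x| > |t| + R_0\}$, so when I truncate data at radius $R_1 > R_0$ I must make sure the small-data solution I invoke agrees with $u$ on the cone $\{|x| > |t| + R_1\}$, and that the nonlinear source $\chi_{R_0}F(u)$ restricted appropriately still feeds into the Strichartz estimate with the correct $\dot{H}^{s_p-1}$-level regularity. This requires combining the exterior uniqueness from Definition \ref{def of exterior solution} with a domain-of-dependence argument: on $\{|x| > |t| + R_1\}$ the value of $u$ depends only on data in $\{|x| > R_1\}$, so I may freely replace the data outside this cone by its small-norm truncation. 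The second delicate point is that the pointwise estimate must hold uniformly for all $t$ with $r > |t| + R_0$, not just at $t = 0$, which is why I phrase everything in terms of time-slice application of Lemma \ref{radial estimate} to $(u(\cdot,t), u_t(\cdot,t))$, whose $\dot{H}^{s_p} \times \dot{H}^{s_p-1}$ norm on the exterior stays small by the conservation/scattering bound $\|(u(\cdot,t),u_t(\cdot,t))\|_{\dot{H}^{s_p}\times \dot{H}^{s_p-1}} < 2\delta$ from Proposition \ref{scattering with small initial data}. Choosing $R_* = R_*(\eps,u)$ so large that the resulting implied constant times the ambient small-data norm is below $\eps$ then completes the argument.
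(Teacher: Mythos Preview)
Your outline correctly captures the paper's argument for the region $\{r > |t| + R_1\}$: truncate the data with $\mathbf{P}_{R_1}$, run small-data scattering in $\dot{H}^{s_p}\times\dot{H}^{s_p-1}$, and apply Lemma~\ref{radial estimate} at each time slice to the truncated solution $u^{(R_1)}$, which agrees with $u$ on that cone by finite speed of propagation. This is exactly what the paper does for its ``darker grey region''.

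The genuine gap is the strip $\{|t|+R_0 < r < |t|+R_1,\ r > R_*\}$ (the paper's ``lighter grey region''). The lemma asks for the bound whenever $r > \max\{|t|+R_0, R_*\}$, so for large $|t|$ you must control $u(r,t)$ down to $r = |t|+R_0$, not merely $r = |t|+R_1$. Your proposed fix, applying Lemma~\ref{radial estimate} to $(u(\cdot,t),u_t(\cdot,t))$ with the bound $\|(u,u_t)\|_{\dot{H}^{s_p}\times\dot{H}^{s_p-1}} < 2\delta$, does not work: that bound belongs to $u^{(R_1)}$, not to $u$, and the two coincide only for $r > |t|+R_1$. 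There is no reason the full solution $u$ should have small (or even finite) $\dot{H}^{s_p}$ norm in the strip.

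The paper supplies a separate argument here that you are missing. One passes to $w(r,t) = r u(r,t)$, writes
\[
|w(r,t)| \le |w(|t|+R_1,t)| + \int_r^{|t|+R_1} |w_r(r',t)|\,dr',
\]
bounds the first term by the already-established estimate on the outer cone, and bounds the second term by $(R_1-R_0)^{1/2}$ times $\big(\int_{|t|+R_0}^\infty |w_r|^2\,dr\big)^{1/2}$, which by Lemma~\ref{uw relationship} is dominated by the exterior $\dot{H}^1$ energy of $u$. The weakly non-radiative hypothesis (together with Remark~\ref{exterior is restriction}) makes this energy uniformly bounded in $t$, so the second term is bounded by a constant depending only on $u$ and $R_1$. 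Since $r$ is large, that constant is eventually dominated by $\eps r^{1-2/(p-1)}$, and dividing by $r$ gives the claim. This use of the non-radiative assumption at the $\dot{H}^1$ level, rather than at the $\dot{H}^{s_p}$ level, is the missing ingredient in your plan.
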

\begin{proof}
 Given any small positive constant $\eps< \eps(p)$, we may choose a large radius $R$ so that 
 \[
  \|\mathbf{P}_R (u_0,u_1)\|_{\dot{H}^{s_p} \times \dot{H}^{s_p-1}}  <\eps.
 \]
Here $\mathbf{P}_R$ is the center cut-off operator defined in Lemma \ref{center cutoff}. Let $u^{(R)}$ be the solution to (CP1) with initial data $\mathbf{P}_R (u_0,u_1)$. Scattering theory with small initial data (Proposition \ref{scattering with small initial data}) then guarantees that $u^{(R)}$ is globally defined in time and satisfies 
 \[
   \left\|(u^{(R)}(\cdot,t), u_t^{(R)}(\cdot,t))\right\|_{\dot{H}^{s_p}(\Rm^3)\times \dot{H}^{s_p-1}(\Rm^3)} < 2\eps, \quad \forall t\in \Rm.
 \]
 By finite speed of propagation and Lemma \ref{radial estimate}, we have $|u(r,t)| = |u^{(R)}(r,t)| \leq 2 C_p \eps r^{-2/(p-1)}$ for all $(r,t)$ with $r\geq |t|+R$. These points $(r,t)$ are exactly those in the darker grey region of figure \ref{figure regions}. We still need to deal with $(r,t)$ so that $|t|+R_0<r<|t|+R$. In fact for these $(r,t)$ the function $w(r,t) = ru(r,t)$ satisfies 
 \begin{align*}
  |w(r,t)| & \leq |w(|t|+R, t)| + \int_{r}^{|t|+R} |w_r(r',t)| dr' \\
  & \leq 2C_p \eps (|t|+R)^{1-2/(p-1)} + (|t|+R-r)^{1/2} \left(\int_{r}^{|t|+R} |w_r(r',t)|^2 dr'\right)^{1/2}\\
  & \leq 2C_p \eps (r+R-R_0)^{1-2/(p-1)} + (R-R_0)^{1/2} \left(\int_{|t|+R_0}^\infty |w_r(r',t)|^2 dr'\right)^{1/2}\\
  & \leq 2C_p \eps (r+R-R_0)^{1-2/(p-1)} + (R-R_0)^{1/2} \left(\frac{1}{4\pi}\int_{|x|>|t|+R_0} |\nabla u(x,t)|^2 dx\right)^{1/2}.
 \end{align*}
In the final step we apply Lemma \ref{uw relationship}. The latter term in the final line above has an upper bound independent of $(r,t)$ by our non-radiative assumption and Remark \ref{exterior is restriction}. Thus there exists a sufficiently large radius $R_*>0$, so that if $\max\{R_*, |t|+R_0\} < r < |t|+R$, i.e. the point $(r,t)$ is in the lighter grey region of figure \ref{figure regions}, then $|w(r,t)|\leq 3C_p \eps r^{1-2/(p-1)}\Rightarrow |u(r,t)| \leq 3C_p \eps r^{-2/(p-1)}$. Combining this with the case $r\geq |t|+R$, we finish the proof.
\end{proof}

\begin{figure}[h]
 \centering
 \includegraphics[scale=0.75]{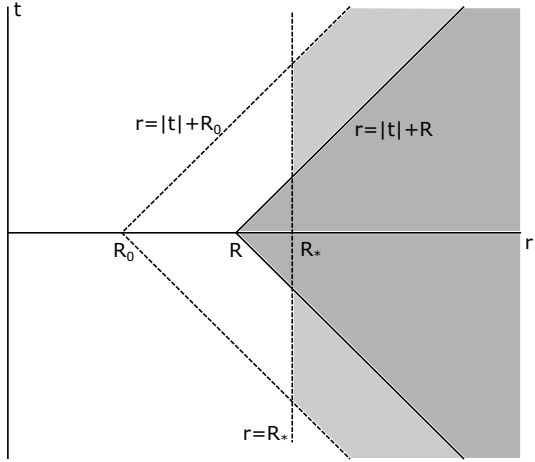}
 \caption{Illustration of regions in the proof of Lemma \ref{initial decay}} \label{figure regions}
\end{figure}

\begin{lemma} \label{characteristic lines plus non radiative} 
Assume that $R \geq R_0>0$. Let $u$ be a radial, $R_0$-weakly non-radiative solution to (CP1) satisfying ($\beta \geq 2/(p-1)$)
 \[
  |u(r,t)| \leq \eps r^{-\beta}, \qquad r>\max\{|t|+R_0, R\}.
 \]
Then we have 
\begin{itemize}  
 \item[(a)] The function $v_+(r,t) = (\partial_t-\partial_r)(ru)$ satisfies the identity 
 \[
  v_+(r,t) = -\zeta \int_{t}^\infty (t'-t+r)|u|^{p-1} u(t'-t+r,t') dt'
 \]
 for almost everywhere $r>|t|+R_0$.
 \item[(b)] The inequality $|\partial_r (ru)|, |ru_t| \leq 2\eps^{p} r^{2-p\beta}$ hold for almost everywhere $r>\max\{|t|+R_0, R\}$.
\end{itemize}
\end{lemma}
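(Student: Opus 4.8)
The plan is to work throughout with the one-dimensional wave equation satisfied by $w(r,t):=r\,u(r,t)$. By Remark \ref{exterior is restriction} the nonlinearity $N(r,t):=\zeta\,r\,|u(r,t)|^{p-1}u(r,t)$ is locally integrable on the exterior region, and $w$ solves $w_{tt}-w_{rr}=N$ there in the Duhamel sense of Definition \ref{def of exterior solution}, with $w_r,w_t\in L^2_{\mathrm{loc}}$ only — this is precisely why every identity below is asserted for almost every $r$. The geometric point is that in null coordinates the exterior region $\{r>|t|+R_0\}$ is the quadrant $\{r-t>R_0\}\cap\{r+t>R_0\}$, so the forward ray $t'\mapsto(t'-t+r,t')$, $t'\geq t$, stays inside it whenever $(r,t)$ does (a short case check on the sign of $t'$). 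Along such a ray the outgoing quantity $v_+=(\partial_t-\partial_r)w$ satisfies $\tfrac{d}{dt'}v_+(t'-t+r,t')=(\partial_t+\partial_r)v_+\big|_{(t'-t+r,t')}=N(t'-t+r,t')$, hence
\[
 v_+(r,t)=v_+(T-t+r,T)-\int_t^T N(t'-t+r,t')\,dt',\qquad T>t.
\]

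For part (a) I would let $T\to+\infty$. Since $|u|\leq\eps r^{-\beta}$ on the part of the ray with large spatial coordinate and $p\beta\geq 2p/(p-1)>2$, one has $|N(t'-t+r,t')|\lesssim(t'-t+r)^{1-p\beta}$, which is integrable in $t'$ over $[t,\infty)$; therefore $\int_t^T N\to\int_t^\infty N$ and $v_+(T-t+r,T)$ converges. Being constant along each ray, the limit is a function $L$ of $r-t$ alone, and evaluating the displayed identity at $(r,t)=(\eta+T,T)$ shows $v_+(\eta+T,T)\to L(\eta)$ pointwise for every $\eta>R_0$. On the other hand, Corollary \ref{w non radiative} together with $|v_+|^2\leq 2(|w_r|^2+|w_t|^2)$ gives $\int_{R_0}^\infty|v_+(\eta+T,T)|^2\,d\eta\to0$ as $T\to+\infty$, so Fatou's lemma forces $L\equiv0$ a.e. Inserting $L=0$ into the identity yields the formula in (a). I expect this step — turning the $L^2$-in-space smallness furnished by the weakly non-radiative hypothesis into the almost-everywhere vanishing of the radiation-field limit $L$ — to be the heart of the argument; the rest is bookkeeping about which points lie in the exterior region.

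For part (b) I would first estimate $v_+$ via (a): when $r>\max\{|t|+R_0,R\}$ the entire forward ray has spatial coordinate $\geq r>R$, so $|u|\leq\eps(t'-t+r)^{-\beta}$ throughout, and the substitution $s=t'-t+r$ gives
\[
 |v_+(r,t)|\leq\eps^p\int_r^\infty s^{1-p\beta}\,ds=\frac{\eps^p}{p\beta-2}\,r^{2-p\beta},
\]
where $p<5$ forces $p\beta\geq 2p/(p-1)>5/2$, hence $1/(p\beta-2)<2$. Running the same argument backward in time — along the rays $r+t=\mathrm{const}$ and using the $t\to-\infty$ half of the non-radiative hypothesis — produces the mirror identity $v_-(r,t)=\zeta\int_{-\infty}^t(r+t-t')|u|^{p-1}u(r+t-t',t')\,dt'$ for a.e.\ $r$, where $v_-=(\partial_t+\partial_r)w$, together with the same bound $|v_-(r,t)|\leq\tfrac{\eps^p}{p\beta-2}r^{2-p\beta}$ on $r>\max\{|t|+R_0,R\}$. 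Finally $\partial_r(ru)=\tfrac12(v_--v_+)$ and $r\,u_t=\tfrac12(v_-+v_+)$, so both are bounded by $\tfrac12(|v_+|+|v_-|)\leq\tfrac{\eps^p}{p\beta-2}r^{2-p\beta}<2\eps^p r^{2-p\beta}$, which is (b).
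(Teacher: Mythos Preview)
Your proof is correct and follows essentially the same route as the paper: reduce to the one-dimensional equation for $w=ru$, integrate the transport equation for $v_\pm$ along characteristics, and use the integrability of the nonlinearity at infinity together with the non-radiative hypothesis (via Corollary~\ref{w non radiative}) to kill the boundary term, then plug the decay bound back into the integral for part~(b). The only cosmetic difference is that the paper passes to the limit directly in $L^2_{loc}(\{r>|t_1|+R_0\})$ (using uniform convergence of the integral and $L^2$-vanishing of the boundary term), whereas you first identify the pointwise limit $L(r-t)$ of the boundary term and then use Fatou to force $L\equiv 0$; both are valid and equivalent here.
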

\begin{proof}
 The function $w(r,t) = ru(r,t)$ solves the one-dimensional wave equation $w_{tt} - w_{rr} = \zeta r|u|^{p-1} u$. Therefore  $v_+(r,t) = w_t(r,t)-w_r(r,t)$ satisfies
 \begin{align}
  \frac{d}{dt}v_+(t-t_1+r,t) & = \zeta (t-t_1+r) |u|^{p-1} u(t-t_1+r,t),& & t>t_1,\; r>|t_1|+R_0; \nonumber\\
  v_+(t_2\!-\!t_1\!+\!r,t_2) \!-\! v_+(r,t_1) &= \int_{t_1}^{t_2}\! \zeta (t\!-\!t_1\!+\!r) |u|^{p-1} u(t\!-\!t_1\!+\!r,t) dt,& & t_2>t_1,\; r>|t_1|+R_0. \label{variation of v plus}
 \end{align}
 Our assumption on decay of $u$ implies that the absolute value of integrand satisfies 
 \[
  (t-t_1+r) |u(t-t_1+r,t)|^{p} \leq \eps^p (t-t_1+r)^{1-p\beta}, \qquad t > \max\{t_1,R-R_0\}, \, r>|t_1|+R_0.
 \]
 Thus if we fix $t_1$, then the right hand side integral of \eqref{variation of v plus} converges uniformly for all $r>|t_1|+R_0$ as $t_2\rightarrow +\infty$. Combining this uniform convergence, our non-radiative assumption and Corollary \ref{w non radiative}, we may make $t_2\rightarrow +\infty$ and obtain an identity
 \[
  v_+(r,t_1) = -\int_{t_1}^{\infty} \zeta (t-t_1+r) |u|^{p-1} u(t-t_1+r,t) dt, \quad \hbox{in}\; L_{loc}^2(\{r: r>|t_1|+R_0\}).
 \]
 This proves part (a). If $r>\max\{|t|+R_0,R\}$, we may use the conclusion of part (a), and plug the decay assumption $u(r,t)\leq \eps r^{-\beta}$ in the right hand integral to conclude 
 \[
  |v_+(r,t)| \leq  2\eps^p r^{2-p\beta}, \quad a.e.\; r>\max\{|t|+R_0,R\}.
 \]
 We may prove a similar inequality about $v_-(r,t) = (\partial_t + \partial_r)(ru)$ in the same manner. Combining these two inequalities we finish the proof.
\end{proof}
\begin{lemma} \label{induction beta}
Assume that $R_0>0$ and $R\geq \max\{1,R_0\}$. Let $u$ be a radial, $R_0$-weakly non-radiative solution to (CP1) satisfying 
 \[
  |u(r,t)| \leq \eps r^{-\beta}, \qquad r>\max\{|t|+R_0,R\}
 \]
 for a sufficiently small constant $\eps < \eps_0(p)$ and $\beta \in [2/(p-1), 3/p]$. Then there exists a large radius $R_1 = R_1(p,R)$ and a small constant $\kappa = \kappa(p)>0$ so that
 \[
  |u(r,t)| \leq \eps r^{-\beta - \kappa}, \qquad r>\max\{|t|+R_0, R_1\}.
 \]
 \end{lemma}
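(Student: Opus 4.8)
The plan is to prove the much stronger assertion that
$|u(r,t)|\lesssim \eps R^{1-\beta}\,r^{-1}$ on all of $\{r>\max\{|t|+R_0,R\}\}$. Granting this, the stated bound follows at once: take $\kappa=\kappa(p)$ to be any fixed number with $0<\kappa<1-3/p$ (possible since $p>3$ forces $3/p<1$, and then $\kappa<1-\beta$ because $\beta\le 3/p$), and take $R_1=R_1(p,R)$ large enough that $C\eps R^{1-\beta}r^{-1}\le \eps r^{-\beta-\kappa}$ whenever $r>R_1$; this is possible because $\beta+\kappa-1<0$, so $r^{\beta+\kappa-1}$ decreases to $0$.

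First I would record the one-step consequences of Lemma \ref{characteristic lines plus non radiative}. Part (b) gives $|\partial_r(ru)(r,t)|,\ |ru_t(r,t)|\lesssim \eps^p r^{2-p\beta}$ a.e.\ on $\{r>\max\{|t|+R_0,R\}\}$, and part (a) the representation $v_\pm(r,t)=(\partial_t\mp\partial_r)(ru)(r,t)=\mp\zeta\int_r^\infty s\,|u|^{p-1}u\bigl(s,\ t\pm(s-r)\bigr)\,ds$. Integrating $\partial_s(su)(\cdot,0)$ in $s$ from the reference radius $R$ out to $r$, then integrating $ru_t$ in $t$ from $0$ (using $|t|<r$, and using $2-p\beta\le-\beta$, which is exactly $\beta\ge 2/(p-1)$, together with $\beta\le 3/p<1$ to bound every $s$-integral by a $p$-dependent multiple of $r^{1-\beta}$), produces $|u(r,t)-\ell/r|\lesssim_p \eps^p\, r^{-\beta}$ with $\ell:=R\,u(R,0)$, hence $|\ell|\le \eps R^{1-\beta}$. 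The trouble is that at the critical value $\beta=2/(p-1)$ one has $2-p\beta=-\beta$, so this single step improves only the constant, not the decay rate; in fact $\beta=2/(p-1)$ is a fixed point of the ``rate map'' $\gamma\mapsto p\gamma-2$, so no finite amount of pointwise bootstrapping on the rate alone can work. This is the real content of the lemma, and it forces an iteration whose gain comes from driving the $r^{-\beta}$-coefficient to zero rather than from improving the exponent directly.

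To run the iteration I would pass to $\psi:=u-\ell/|x|$. Since $\ell/|x|$ is static and harmonic off the origin, $\psi$ still solves $\partial_t^2\psi-\Delta\psi=\zeta|u|^{p-1}u$ in the exterior region, it is still $R_0$-weakly non-radiative (the subtracted profile has exterior energy $O(\ell^2/(|t|+R_0))\to0$), and, crucially, $\psi(R,0)=0$, so the boundary term drops out of the reconstruction. I then propagate a decomposed bound $|u(r,t)|\le L_k/r+\delta_k\,r^{-\beta}$ on $\{r>\max\{|t|+R_0,R\}\}$, starting from $(L_1,\delta_1)=(|\ell|,\ C\eps^p)$. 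Feeding $|u|^p\le 2^{p-1}(L_k^p r^{-p}+\delta_k^p r^{-p\beta})$ into the $v_\pm$ formulas yields $|\partial_r(ru)|,\,|ru_t|\lesssim_p L_k^p r^{2-p}+\delta_k^p r^{2-p\beta}$; reconstructing $\psi$ as above (no boundary term) I use that $p>3$ makes $2-p<-1$, so $\int_R^r s^{2-p}\,ds\le R^{3-p}/(p-3)$ is bounded in $r$ and the $L_k$-part only contributes an extra $O(1/r)$, while $2-p\beta\le-\beta$ with $\beta\le 3/p<1$ give $\int_R^r s^{2-p\beta}\,ds\lesssim_p r^{1-\beta}$, so the $\delta_k$-part contributes an $O(r^{-\beta})$ term. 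This gives the recursions $L_{k+1}\le|\ell|+C'L_k^p R^{3-p}$ and $\delta_{k+1}\le C'\delta_k^p$ with $C'=C'(p)$. Now $\beta\ge 2/(p-1)$ is used a second time: $(1-\beta)(p-1)+(3-p)=2-(p-1)\beta\le 0$, so $|\ell|^{p-1}R^{3-p}\le\eps^{p-1}$ (here $R\ge1$ matters), hence for $\eps<\eps_0(p)$ the map $L\mapsto |\ell|+C'L^p R^{3-p}$ keeps $L_k$ increasing and $\le 2|\ell|$, so $L_k\uparrow L_\infty\le 2\eps R^{1-\beta}$; and $\delta_{k+1}\le C'\delta_k^p$ together with the smallness of $\eps$ forces $\delta_k\to0$ (doubly exponentially). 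Letting $k\to\infty$ in $|u(r,t)|\le L_k/r+\delta_k r^{-\beta}$ gives $|u(r,t)|\le 2\eps R^{1-\beta}/r$ on $\{r>\max\{|t|+R_0,R\}\}$, which closes the argument via the first paragraph.

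The main obstacle is exactly this bootstrap: one must realize that a single reconstruction is powerless at $\beta=2/(p-1)$, and that the three hypotheses conspire to make the coefficient-iteration converge — $p>3$ turns the $F(\ell/|x|)\sim|x|^{-p}$ part of the nonlinearity into a harmless $O(1/r)$ correction, $\beta\ge 2/(p-1)$ both supplies the bound $2-p\beta\le-\beta$ and keeps $L_k$ from blowing up as $R\to\infty$, and $\eps<\eps_0(p)$ yields the superlinear recursion $\delta_{k+1}\lesssim\delta_k^p$. A routine technical point is that the $v_\pm$ identities hold only in $L^2_{\mathrm{loc}}$, so the pointwise estimates on $u$ have to be read off from the a.e.\ bounds on $\partial_r(ru)$ and $ru_t$ together with the continuity of $u$ (guaranteed by Remark \ref{exterior is restriction}).
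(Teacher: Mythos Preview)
Your proof is correct but follows a genuinely different route from the paper's. The paper argues by dyadic scaling: setting $a_n=\sup\{r^\beta|u(r,t)|:r>\max\{|t|+R_0,2^nR\}\}$, it compares $w(r,t)$ with $w(r/2,0)$ (rather than with a fixed reference value) and uses Lemma~\ref{characteristic lines plus non radiative}(b) to obtain
\[
 a_n\le\bigl[(1/2)^{1-\beta}+4\eps^{p-1}\bigr]a_{n-1}\le\lambda\,a_{n-1},\qquad \lambda=\lambda(p)\in((1/2)^{1-3/p},1).
\]
The gain comes entirely from the factor $(1/2)^{1-\beta}<1$, which is why the hypothesis $\beta\le 3/p$ enters; one then reads off $|u(r,t)|\le\eps r^{-\beta-\kappa}$ with any $\kappa<\log_2(1/\lambda)$ by choosing $n$ so that $2^nR<r\le2^{n+1}R$.

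Your argument instead anchors at the fixed base point $(R,0)$, subtracts the emerging profile $\ell/r$ with $\ell=Ru(R,0)$, and runs a coefficient (rather than exponent) bootstrap $|u|\le L_k/r+\delta_k r^{-\beta}$. The superlinear recursion $\delta_{k+1}\lesssim\delta_k^p$ kills the $r^{-\beta}$ part, while the hypothesis $\beta\ge2/(p-1)$ (through $(1-\beta)(p-1)+(3-p)\le0$) keeps $L_k$ bounded by $2|\ell|\le2\eps R^{1-\beta}$. This yields the stronger conclusion $|u(r,t)|\lesssim\eps R^{1-\beta}r^{-1}$ on the whole region, from which the lemma follows with any $\kappa<1-3/p$. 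In effect you prove in one stroke what the paper obtains only after iterating Lemma~\ref{induction beta} several times inside Proposition~\ref{asymptotic behaviour for match}. The paper's proof is shorter and more transparent as a single step; yours is more work up front but makes the subsequent iteration over $\beta$ unnecessary. Two small remarks: the uniformity in $\beta$ of your constants (e.g.\ in $\int_R^r s^{2-p\beta}\,ds\le\int_R^r s^{-\beta}\,ds\le\frac{p}{p-3}r^{1-\beta}$ and in the choice of $R_1$) should be noted explicitly, since the lemma demands $\kappa=\kappa(p)$ and $R_1=R_1(p,R)$ independent of $\beta$; and the non-radiativity of $\psi$, while true, is not actually used---you only need the $v_\pm$ formulas for $u$ together with $\partial_r(r\psi)=\partial_r(ru)$, $r\psi_t=ru_t$.
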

 \begin{proof}
 Let us define ($n=0,1,2,\cdots$)
 \[
  a_n = \sup \{r^{\beta}|u(r,t)|: t \in \Rm, r>\max\{|t|+R_0, 2^n R\}\} \leq \eps. 
 \]
 Thus we have $|u(r,t)|\leq a_n r^{-\beta}$ for all $(r,t)$ with $r>\max\{|t|+R_0, 2^n R\}$. Given any $(r,t)$ with $r>\max\{|t|+R_0, 2^n R\}$, we may utilize Lemma \ref{characteristic lines plus non radiative} and verify that $w(r,t) = ru(r,t)$ satisfies  
  \begin{align*}
   |w(r,t)| & \leq |w(r/2,0)| + |w(r,0)-w(r/2,0)| + |w(r,t)-w(r,0)|\\
   &\leq  |w(r/2,0)| + \int_{r/2}^r |w_r(r',0)| dr' + \left|\int_{0}^{t} w_t(r,t') dt'\right|\\
   & \leq (r/2)^{1-\beta} a_{n-1} + 4 a_{n-1}^p r^{3-p\beta}.
  \end{align*}
  Thus we have
  \begin{align*}
   a_n  & = \sup \{r^{\beta-1}|w(r,t)|: t\in \Rm, r>\max\{|t|+R, 2^n R\} \}\\
    & \leq \sup \left\{(1/2)^{1-\beta} a_{n-1} + 4 a_{n-1}^p r^{2-(p-1)\beta}: t\in \Rm, r>\max\{|t|+R, 2^n R\}\right\} \\
   & \leq [(1/2)^{1-3/p} + 4 \eps^{p-1}] a_{n-1} \leq  \lambda a_{n-1}. 
  \end{align*}
  Here we may choose an arbitrary constant $\lambda = \lambda(p) \in ((1/2)^{1-3/p}, 1)$ and then determine $\eps = \eps(p)$ accordingly. Therefore we have $a_n \leq \eps \lambda^n$. As a result, given any $(r,t)$ with $r>\max\{|t|+R_0,R\}$, we may choose $n = \max\{n\in {\mathbb Z}: 2^n R < r\}$ and find an upper bound 
  \begin{align*}
   r^\beta |u(r,t)| \leq a_n = \eps \lambda^n \leq \eps \lambda^{\log_2 (r/2R)} = \eps (2R)^{\log_2(1/\lambda)} r^{-\log_2 (1/\lambda)}
  \end{align*}
  Thus we may choose an arbitrary constant $\kappa = \kappa(p)  \in (0, \log_2 (1/\lambda))$ and determine $R_1 = R_1(R,p)$ accordingly so that the inequality $|u(r,t)| \leq \eps r^{-\beta - \kappa}$ holds for all $(r,t)$ with $r>\max\{|t|+R_0, R_1\}$.
 \end{proof}
 \begin{proposition} \label{asymptotic behaviour for match}
  Assume that $R_0>0$ and $R\geq \max\{1,R_0\}$. Let $u$ be a radial, $R_0$-weakly non-radiative solution to (CP1) satisfying 
 \[
  |u(r,t)| \leq \eps r^{-2/(p-1)}, \qquad r>\max\{|t|+R_0,R\}
 \]
 for a sufficiently small positive constant $\eps< \eps_0(p)$. Then there exists two constants $C \in \Rm$ and $R'>1$ so that 
 \begin{align*}
  &|u(r,t)-C/r| \lesssim r^{2-p},&  &\forall \; r> \max\{|t|+R_0,R'\};& \\
  &|u_r(r,t)+C/r^2|+|u_t(r,t)| \lesssim r^{1-p},& &\forall \;a.e.\; r> \max\{|t|+R_0,R'\}.&
 \end{align*}
 \end{proposition}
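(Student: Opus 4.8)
The plan is to iterate the decay-improving Lemma~\ref{induction beta} until the exponent is large enough that $w(r,t):=ru(r,t)$ has a limit as $r\to\infty$, then to pin that limit down, show it is independent of $t$, and finally upgrade the rate to the sharp one. For the first part, observe that $p>3$ forces $2/(p-1)<3/p$, so the hypothesis $|u(r,t)|\le\eps r^{-2/(p-1)}$ is exactly the starting point for Lemma~\ref{induction beta}, which raises the decay exponent by a fixed amount $\kappa=\kappa(p)>0$ at the cost of enlarging the base radius. After finitely many applications (the number depending only on $p$) we arrive at an exponent $\beta_*\in(3/p,\,3/p+\kappa]$ and a radius $R_*$ with $|u(r,t)|\le\eps r^{-\beta_*}$ for $r>\max\{|t|+R_0,R_*\}$, so that now $p\beta_*>3$.

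Next, since $p\beta_*>3$, Lemma~\ref{characteristic lines plus non radiative}(b) gives $|\partial_r(ru)|,\,|ru_t|\lesssim r^{2-p\beta_*}$ for a.e.\ $r$ in that exterior region, and $2-p\beta_*<-1$, so $w_r=\partial_r(ru)$ is integrable near infinity; integrating in $r$ shows that $w(r,t)$ converges to some $C(t)$ as $r\to\infty$ with $|w(r,t)-C(t)|\lesssim r^{3-p\beta_*}$. To see that $C(t)$ is constant I would use the companion bound $|w_t|=|ru_t|\lesssim r^{2-p\beta_*}$: for fixed $t_1<t_2$ and a.e.\ $r>\max\{|t_1|,|t_2|\}+R_0$ one has $|w(r,t_2)-w(r,t_1)|\le\int_{t_1}^{t_2}|w_t(r,t)|\,dt\lesssim(t_2-t_1)\,r^{2-p\beta_*}$, which tends to $0$ as $r\to\infty$; hence $C(t_1)=C(t_2)=:C$. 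Since $3-p\beta_*<0$ and $r\ge R_*$ throughout the region, this also yields $|w(r,t)|\le|C|+c\,R_*^{3-p\beta_*}=:M$, i.e.\ $|u(r,t)|\le M/r$ for $r>\max\{|t|+R_0,R_*\}$.

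Finally I upgrade the rate. Because $1\ge 2/(p-1)$ when $p\ge 3$, I may feed $|u(r,t)|\le Mr^{-1}$ back into Lemma~\ref{characteristic lines plus non radiative}(b) with $\beta=1$, obtaining $|w_r|,\,|ru_t|\lesssim r^{2-p}$ a.e.\ for $r>\max\{|t|+R_0,R'\}$ with a suitable radius $R'>1$. Integrating $w_r$ once more (the limit must again be $C$ by uniqueness of limits) gives $|w(r,t)-C|\lesssim r^{3-p}$, that is $|u(r,t)-C/r|\lesssim r^{2-p}$. The derivative bound then follows from $u_r=w_r/r-w/r^2$, whence $|u_r+C/r^2|\le|w_r|/r+|w-C|/r^2\lesssim r^{1-p}$, together with $|u_t|=|ru_t|/r\lesssim r^{1-p}$.

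I expect the main obstacle to be the second step. The bootstrap in the first step is bookkeeping once Lemma~\ref{induction beta} is in hand, and the last step just replays the integration argument with $\beta=1$; but passing from ``$w(r,t)\to C(t)$ for each fixed $t$'' to ``$C(t)$ is constant'' requires justifying the fundamental theorem of calculus in the $t$-variable for a.e.\ $r$, even though the derivative bounds from Lemma~\ref{characteristic lines plus non radiative} are only available almost everywhere. This is where the regularity recorded in Remark~\ref{exterior is restriction} and Lemma~\ref{uw relationship} --- continuity of $w$ and local integrability of $w_r,w_t$ --- is needed, together with a Fubini argument to select the good values of $r$.
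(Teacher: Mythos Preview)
Your proposal is correct and follows essentially the same route as the paper: iterate Lemma~\ref{induction beta} until $\beta>3/p$, apply Lemma~\ref{characteristic lines plus non radiative} to make $w_r,w_t$ integrable at infinity so $w(r,t)\to C$ (independent of $t$ by the $w_t$ bound), deduce $|u|\lesssim r^{-1}$, and then reapply Lemma~\ref{characteristic lines plus non radiative} with $\beta=1$ to obtain the sharp rates $r^{2-p}$ and $r^{1-p}$. The regularity concern you flag in the last paragraph is handled in the paper exactly as you suggest, via the continuity recorded in Remark~\ref{exterior is restriction}; the paper simply asserts the $t$-independence of the limit from the $w_t$ decay without further comment.
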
 
 \begin{proof}
  First of all, we gain better decay estimates of $u$ by induction. Application of Lemma \ref{induction beta} multiple times leads to a finite sequence $R<R_1<R_2<\cdots < R_n$ with $2/(p-1)+(n-1)\kappa \leq 3/p < 2/(p-1) + n\kappa$ and
  \begin{align*}
  &|u(r,t)| \leq \eps r^{-2/(p-1)-j\kappa}, \; \forall r>\max\{|t|+R_0, R_j\},& & j=1,2,\cdots, n.
  \end{align*}
 For convenience we define $\beta = 2/(p-1) + n \kappa > 3/p$ and apply Lemma \ref{characteristic lines plus non radiative}. 
 \[
  |w_r(r,t)|, \;|w_t(r,t)| \leq 2\eps^p r^{2-p\beta}, \quad \forall \; a.e. \; r>\max\{|t|+R_0, R_n\}.
 \]
 Since $2-p\beta<-1$, the function $w(r,t)$ converges as $r\rightarrow +\infty$ for all fixed $t\in \Rm$. In addition, this limit is independent of $t$ because of the decay estimate of $w_t$. Therefore there exists a constant $C$, so that 
 \[
  \left|w(r,t) - C\right| \lesssim r^{3-p\beta}, \quad r>\max\{|t|+R_0, R_n\}.
 \]
 It immediately follows that $|w(r,t)|\lesssim 1 \Rightarrow |u(r,t)|\lesssim r^{-1}$. We then apply Lemma \ref{characteristic lines plus non radiative} again and obtain 
 \[
  |w_r(r,t)|+|w_t(r,t)| \lesssim r^{2-p}\; a.e.\Rightarrow \left|w(r,t) - C\right| \lesssim r^{3-p}, \quad \forall r>\max\{|t|+R_0, R_n\}.
 \]
 Finally we rewrite these inequalities in term of $u$ and finish the proof.
 \end{proof}
 
 \subsection{Coincidence of Non-radiative Solutions}
 In this subsection we show that two weakly non-radiative radial solutions with the same asymptotic behaviour as $r\rightarrow \infty$ must be exactly the same. 
 
 \begin{lemma} \label{match further}
  Assume that $R'>R_0>0$. Let $u$ and $\tilde{u}$ be two radial, $R_0$-weakly non-radiative solutions to (CP1) so that $u(r,t) = \tilde{u}(r,t)$ if $r> |t|+R'$. Then the identity $u(r,t) = \tilde{u}(r,t)$ also holds if $r> |t|+R_0$.
 \end{lemma}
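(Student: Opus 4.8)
The plan is to argue by contradiction on the \emph{infimal coincidence radius}. Write $d=u-\tilde u$, put $g(r,t)=r\,d(r,t)$ and $S(r,t)=\zeta r\bigl(|u|^{p-1}u-|\tilde u|^{p-1}\tilde u\bigr)(r,t)$, so that, exactly as in the paper's reduction $w=ru$, the function $g$ solves the one-dimensional wave equation $g_{tt}-g_{rr}=S$ in $\{r>|t|+R_0\}$, with $g$ vanishing identically for $r>|t|+R'$. Set
\[
 \rho_*=\inf\{\rho\in[R_0,R']:\ d(r,t)=0\ \text{for all}\ r>|t|+\rho\}.
\]
By continuity $d$, $g$ and $S$ all vanish on $\{r-|t|\ge\rho_*\}$, so it suffices to exclude $\rho_*>R_0$. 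I would also record two facts at the outset: first, $M:=\sup\{|g(r,t)|:r>|t|+R_0\}<\infty$, from the pointwise decay of Proposition~\ref{asymptotic behaviour for match}, the vanishing of the exterior energy as $|t|\to\infty$, and the local boundedness of exterior solutions in Remark~\ref{exterior is restriction}; second, since $u$ and $\tilde u$ obey $|u|,|\tilde u|\lesssim r^{-1}$ for $r$ large (Proposition~\ref{asymptotic behaviour for match}) and are bounded on compact subsets of the exterior region, the source satisfies the clean bound $|S(r,t)|\le C_0\,r^{-(p-1)}|g(r,t)|$ for $r>|t|+R_0$, with $C_0=C_0(p,u,\tilde u)$. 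The gain $p-1>2$ over $r^{-1}$ is what makes the estimate close.

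Next, applying the method of Lemma~\ref{characteristic lines plus non radiative} to $u$ and to $\tilde u$ and subtracting, I would establish for $v_\pm:=(\partial_t\mp\partial_r)g$ the representations
\[
 v_+(r,t)=-\int_t^{\infty}S(r-t+t',t')\,dt',\qquad v_-(r,t)=\int_{-\infty}^{t}S(r+t-t',t')\,dt',
\]
valid for a.e.\ $r>|t|+R_0$: the integrals along the forward, resp.\ backward, characteristics converge because of the source bound above, the boundary contributions at $t'=\pm\infty$ drop out since $u$ and $\tilde u$ are $R_0$-weakly non-radiative (Corollary~\ref{w non radiative}), and one checks the characteristics stay in $\{r>|t|+R_0\}$. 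Because $g(\cdot,t)\equiv0$ for $r>|t|+R'$ one then recovers $g(r,t)=\tfrac12\int_r^{\infty}\bigl(v_+(r',t)-v_-(r',t)\bigr)\,dr'$.

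The core is a one-step descent: I claim there is $\delta=\delta(p,u,\tilde u,R_0)>0$ so that if $d$ vanishes for $r-|t|>\rho$, for some $\rho\in(R_0,R']$ with $\rho-\delta\ge R_0$, then $d$ vanishes for $r-|t|>\rho-\delta$. Fix $(r_0,t_0)$ in the slab $\{\rho-\delta<r-|t|<\rho\}$, say $t_0\ge0$, and put $m_\delta:=\sup\{|g(r,t)|:\rho-\delta<r-|t|<\rho\}\le M$. The geometric point is that along every characteristic entering the two formulas, evaluated at $(r',t_0)$ with $r_0\le r'<t_0+\rho$, the quantity $r-|t|$ stays in $(\rho-\delta,\rho)$, so $S$ is sampled only inside the slab, and $v_\pm(r',t_0)=0$ once $r'\ge t_0+\rho$. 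Feeding the source bound into the characteristic integrals gives $|v_\pm(r',t_0)|\lesssim_p C_0\,m_\delta\,(r')^{2-p}$, and then, since the $r'$-integration range has length $t_0+\rho-r_0<\delta$ and $r_0>R_0$,
\[
 |g(r_0,t_0)|\le\frac12\int_{r_0}^{t_0+\rho}\bigl(|v_+|+|v_-|\bigr)(r',t_0)\,dr'\le c_p\,C_0\,\delta\,R_0^{\,2-p}\,m_\delta .
\]
Choosing $\delta$ so that $c_p C_0\,\delta\,R_0^{\,2-p}\le\tfrac12$, and treating $t_0\le0$ by the time-reflected version, yields $|g|\le\tfrac12 m_\delta$ on the slab, hence $m_\delta=0$ since $m_\delta\le M<\infty$. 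Iterating this descent finitely many times, with the final step shrunk so as to land exactly at $R_0$, forces $d\equiv0$ for $r-|t|>R_0$, contradicting $\rho_*>R_0$.

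I expect the main obstacle to be the rigorous derivation, at the regularity of exterior solutions, of the characteristic identities for $v_\pm$ and of $g=\tfrac12\int_r^{\infty}(v_+-v_-)\,dr'$ in the appropriate a.e./$L^2_{\mathrm{loc}}$ sense: justifying the Duhamel bookkeeping, the passage to the limit at $t'=\pm\infty$ using only the weak non-radiative hypothesis via Corollary~\ref{w non radiative}, and the fact that the backward characteristic never leaves the exterior cone. Once these are in place, the slab argument is the elementary computation sketched above; the only other point needing care is the uniform finiteness of $M$ and $C_0$, which rests on the decay results of the previous subsection.
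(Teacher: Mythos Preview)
Your strategy---contradiction on the infimal coincidence radius, characteristic representations for $v_\pm$, and smallness from the slab width $\delta$---is exactly the paper's. The difference is bookkeeping: the paper works with the $L^2$ quantity
\[
 g(\delta)=\sup_{t}\int_{|t|+R-\delta}^{|t|+R}\bigl(|w_r-\tilde w_r|^2+|w_t-\tilde w_t|^2\bigr)\,dr
\]
and closes via $g(\delta)\le C\delta^2 g(\delta)$, whereas you use the pointwise quantity $m_\delta=\sup|g|$ and close via $m_\delta\le C\delta\,m_\delta$. Both are legitimate.

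There is, however, a genuine gap in your justification. Proposition~\ref{asymptotic behaviour for match} has as a \emph{hypothesis} the decay $|u|\le\eps r^{-2/(p-1)}$, which in the paper is supplied by Lemma~\ref{initial decay} and hence requires $\dot H^{s_p}\times\dot H^{s_p-1}$ data. Lemma~\ref{match further} assumes only that $u,\tilde u$ are $R_0$-weakly non-radiative with $\mathcal H_{R_0}$ data, so Proposition~\ref{asymptotic behaviour for match} is \emph{not} available to you, and you cannot assert $|u|,|\tilde u|\lesssim r^{-1}$ or the source bound $|S|\le C_0\,r^{-(p-1)}|g|$. The only pointwise decay at your disposal is the radial $\dot H^1$ estimate $|u(r,t)|\le M\,r^{-1/2}$ of Lemma~\ref{radial estimate outside} (which is precisely what the paper uses). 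This yields $|S|\le C_0\,r^{-(p-1)/2}|g|$; since $(p-1)/2>1$ for $p>3$, the characteristic integrals still converge and your closing inequality $m_\delta\le c_p C_0\,\delta\,R_0^{-(p-3)/2}\,m_\delta$ survives with the corrected exponent. Likewise, your argument for $M<\infty$ should not invoke Proposition~\ref{asymptotic behaviour for match}: use instead that $g\equiv0$ for $r>|t|+R'$, integrate $g_r$ over $[r,|t|+R']$ (length $\le R'-R_0$), and bound by Cauchy--Schwarz together with Lemma~\ref{uw relationship} and the non-radiative hypothesis.

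One minor geometric correction: for $t_0>0$ the \emph{backward} characteristic from $(r',t_0)$ does not stay in the slab; as $t'$ decreases through $0$ the quantity $r''-|t'|$ increases from $r'-t_0$ to $r'+t_0$ and may exceed $\rho$. But it can only exit into the region $\{r-|t|\ge\rho\}$ where $g\equiv0$, so $S$ vanishes there and the estimate is unaffected.
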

 
 \begin{figure}[h]
 \centering
 \includegraphics[scale=0.75]{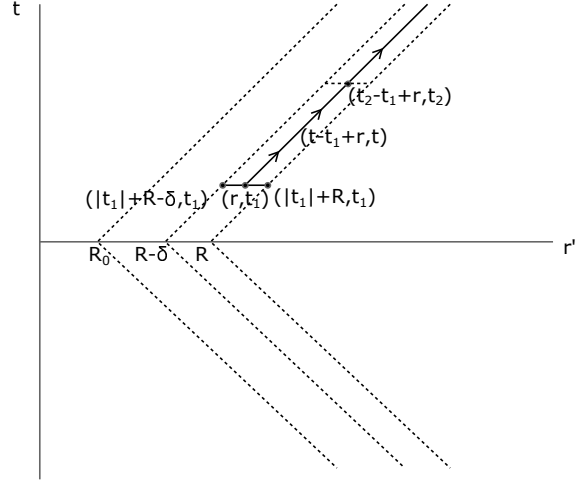}
 \caption{Illustration of integral path} \label{figure integral path}
\end{figure}
 
 \begin{proof}
 Let us define 
 \[
  R = \min \{r' \geq R_0: u(r,t) = \tilde{u}(r,t) \; \hbox{if} \; r>|t|+r'\} \leq R'.
 \]
 It suffices to show $R=R_0$. If $R>R_0$, we define a function $g$ for $\delta \in (0,R-R_0)$. Functions $v_\pm, w$ below be defined as in the proof of Lemma \ref{characteristic lines plus non radiative}, $\tilde{v}_\pm, \tilde{w}$ are derived from $\tilde{u}$ in the same manner.
 \begin{align*}
  g(\delta) & = \sup_{t\in \Rm} \left\{\int_{|t|+R-\delta}^{\infty} |w_r(r,t)-\tilde{w}_r(r,t)|^2 + |w_t(r,t)-\tilde{w}_t(r,t)|^2 dr\right\} \\
  & = \sup_{t \in \Rm} \left\{\int_{|t|+R-\delta}^{|t|+R} |w_r(r,t)-\tilde{w}_r(r,t)|^2 + |w_t(r,t)-\tilde{w}_t(r,t)|^2 dr\right\}
 \end{align*}
We always have $g(\delta)<+\infty$ by our non-radiative assumption, Remark \ref{exterior is restriction} and Lemma \ref{uw relationship}. By the same argument as in Lemma \ref{characteristic lines plus non radiative}, the following identity holds for any time $t_2>t_1$
 \begin{align}
  & [v_+(t_2 \!-\! t_1\!+\! r,t_2) - \tilde{v}_+(t_2\!-\!t_1\!+\!r, t_2)] - [v_+(r,t_1)-\tilde{v}_+(r,t_1)] \label{L2 identity w tilde}\\
  & \quad = \int_{t_1}^{t_2}\! \zeta (t \! -\! t_1 \!+\! r) [|u|^{p-1} u(t \!-\! t_1 \!+\! r,t) - |\tilde{u}|^{p-1}\tilde{u}(t\!-\!t_1\!+\!r,t)] dt, \;\; \hbox{in}\; L_r^2 (J(t_1,\delta)). \nonumber
 \end{align}
For convenience we use the notation $J(t_1,\delta) = [|t_1|+R-\delta, |t_1|+R]$. Please see figure \ref{figure integral path} for an illustration of the integral path involved. By considering the limits of both sides of \eqref{L2 identity w tilde} in the space $L_r^2 (J(t_1,\delta))$ as $t_2 \rightarrow +\infty$, we obtain an identity 
\begin{equation} \label{L2 identity infinity}
 \tilde{v}_+(r,t_1) - v_+(r,t_1) = \int_{t_1}^{\infty}\! \zeta (t \! -\! t_1 \!+\! r) [|u|^{p-1} u(t \!-\! t_1 \!+\! r,t) - |\tilde{u}|^{p-1}\tilde{u}(t\!-\!t_1\!+\!r,t)] dt.
\end{equation}
The limit of the left hand side is relatively easy. We only need to recall the non-radiative assumption, apply Corollary \ref{w non radiative} and obtain
\[
 \lim_{t_2\rightarrow +\infty} \left(\|v_+(t_2 - t_1 + r,t_2)\|_{L_r^2(J(t_1,\delta))} +\|\tilde{v}_+(t_2 - t_1 + r, t_2)\|_{L_r^2(J(t_1,\delta))}\right) = 0.
\]
 In order to evaluate the limit of the right hand side we first give upper bounds of $u$, $\tilde{u}$ as well as $w-\tilde{w}$. We recall Remark \ref{exterior is restriction}, our non-radiative assumption and apply Lemma \ref{radial estimate outside} to obtain
\begin{align*}
 &|u(r,t)| \leq M r^{-1/2}, \; r>|t|+R_0; & &M = \sup_{t\in \Rm} \left(\frac{1}{4\pi}\int_{|x|>|t|+R_0} |\nabla u(x)|^2 dx\right)^{1/2} < +\infty.& \\
 &|\tilde{u}(r,t)| \leq \tilde{M} r^{-1/2}, \; r>|t|+R_0; & &\tilde{M} = \sup_{t\in \Rm} \left(\frac{1}{4\pi}\int_{|x|>|t|+R_0} |\nabla \tilde{u}(x)|^2 dx\right)^{1/2} < +\infty.&
\end{align*}
We may also find an upper bound of $w-\tilde{w}$ at $(t-t_1+r,t)$ with $r\in J(t_1,\delta)$ and $t>t_1$
 \begin{align*}
  \left|(w-\tilde{w})(t-t_1+r,t)\right| &\leq \left|(w-\tilde{w})(t-t_1+r+\delta,t)\right| + \int_{t-t_1+r}^{t-t_1+r+\delta} \left|w_r(r',t) - \tilde{w}_r(r',t)\right| dr'\\
    & \leq \delta^{1/2} \left(\int_{t-t_1+r}^{t-t_1+r+\delta} \left|w_r(r',t) - \tilde{w}_r(r',t)\right|^2 dr'\right)^{1/2} \leq \delta^{1/2} g(\delta)^{1/2}. 
 \end{align*}
 Here $t-t_1+r+\delta \geq t-t_1+(|t_1|+R-\delta)+\delta \geq |t|+R$, thus $|(w-\tilde{w})(t-t_1+r+\delta,t)| = 0$. Similarly we have $t-t_1+r\geq |t|+R-\delta$ thus the integral of $|w_r-\tilde{w}_r|^2$ is dominated by $g(\delta)$. Combining these two estimates we may find an upper bound of the integrand in the right hand side of \eqref{L2 identity w tilde} as below. Please note that all the functions are evaluated at $(t-t_1+r,t)$ unless specified otherwise.
\begin{align*}
 \left|\zeta (t \! -\! t_1 \!+\! r) [|u|^{p-1} u - |\tilde{u}|^{p-1}\tilde{u}]\right| & \leq p[|u|+|\tilde{u}|]^{p-1} |w -\tilde{w}| \\
 & \leq p (M+\tilde{M})^{p-1} \delta^{1/2} g(\delta)^{1/2} (t-t_1+r)^{-\frac{p-1}{2}}.
\end{align*}
Here $\frac{p-1}{2} > 1$. Thus as $t_2 \rightarrow \infty$, the integral in the right hand side of \eqref{L2 identity w tilde} converges to that of \eqref{L2 identity infinity} uniformly for all $r \in J(t_1,\delta)$. This immediately gives the convergence in $L_r^2 (J(t_1,\delta))$. Next we substitute the integrand in \eqref{L2 identity infinity} by its upper bound given above and obtain
\begin{align*}
 \left|\tilde{v}_+(r,t_1) \!-\! v_+(r,t_1)\right| & \leq \int_{t_1}^{\infty} p (M+\tilde{M})^{p-1} \delta^{1/2} g(\delta)^{1/2} (t\!-\!t_1\!+\!r)^{-\frac{p-1}{2}} dt\\
 & \leq \frac{2p}{p-3} (M+\tilde{M})^{p-1} \delta^{1/2} g(\delta)^{1/2} r^{-\frac{p-3}{2}}
\end{align*}
Thus 
\begin{align*}
\int_{|t_1|+R-\delta}^{|t_1|+R} \left|\tilde{v}_+(r,t_1) \!-\! v_+(r,t_1)\right|^2 dr \leq & \int_{|t_1|+R-\delta}^{|t_1|+R} \frac{4p^2}{(p-3)^2} (M+\tilde{M})^{2p-2} \delta g(\delta) R_0^{3-p}dr\\
 = & C(p,M,\tilde{M},R_0) \delta^2 g(\delta).
\end{align*}
Similarly we have
 \[
  \int_{|t_1|+R-\delta}^{|t_1|+R} |\tilde{v}_-(r,t_1) - v_-(r,t_1)|^2 dr \leq C(p,M,\tilde{M},R_0) \delta^2 g(\delta).
 \]
Since these inequalities hold for all $t_1 \in \Rm$, we have
\[
 2g(\delta) = \sup_{t\in \Rm} \int_{|t|+R-\delta}^{|t|+R} \left(|v_-(r,t) - \tilde{v}_-(r,t)|^2 + |v_+(r,t) - \tilde{v}_+(r,t)|^2 \right) dr \leq 2C(p,M,\tilde{M},R_0) \delta^2 g(\delta).
\]
This means $g(\delta) = 0$ for sufficiently small $\delta >0$, which implies that $w(r,t) = \tilde{w}(r,t)$ for all $(r,t)$ with $r>|t|+R-\delta$, thus gives a contradiction. 
 \end{proof}
 \begin{proposition} \label{match two non radiative}
  Let $u$ and $\tilde{u}$ be two radial, $R_0$-weakly non-radiative solutions to (CP1). In addition, there exists a large radius $R'>\max\{R_0,1\}$ and a constant $C>0$ so that
  \begin{align*}
   &|u(r,t)|+|\tilde{u}(r,t)| \leq \frac{C}{r}, & &r>\max\{|t|+R_0,R'\};&\\
   &\lim_{r\rightarrow +\infty} \left|ru(r,t)-r\tilde{u}(r,t)\right| = 0,& &\forall t\in \Rm.&
  \end{align*}
  Then $u(r,t) \equiv \tilde{u}(r,t)$ for all $(r,t)$ with $r> |t|+R_0$.
 \end{proposition}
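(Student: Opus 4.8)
The plan is to argue in two stages: first to show that $u$ and $\tilde u$ coincide on a far exterior region $\{(r,t): r>|t|+R_1\}$ for a sufficiently large $R_1$, and then to invoke Lemma~\ref{match further} --- with $R_1$ playing the role of the radius ``$R'$'' there, which is legitimate since $R_1>R_0>0$ --- to propagate this coincidence inward to $\{r>|t|+R_0\}$. Throughout I write $w=ru$, $\tilde w=r\tilde u$, and let $v_\pm,\tilde v_\pm$ denote the characteristic quantities $(\partial_t\mp\partial_r)w$, $(\partial_t\mp\partial_r)\tilde w$ as in Lemma~\ref{characteristic lines plus non radiative}. For $\rho\ge R'$ I set
\[
 h(\rho)=\sup\left\{|w(r,t)-\tilde w(r,t)|:t\in\Rm,\ r>\max\{|t|+R_0,\rho\}\right\}.
\]
The first hypothesis gives $|w|+|\tilde w|\le C$ on $\{r>\max\{|t|+R_0,R'\}\}$, so $h(\rho)\le C<+\infty$ for every $\rho\ge R'$; the heart of the first stage is to prove that $h(R_1)=0$ once $R_1$ is chosen large enough.

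For this I would start from the Duhamel identity for the characteristic derivatives. Since $p>3$ we have $\beta:=1\ge 2/(p-1)$, so Lemma~\ref{characteristic lines plus non radiative}(a) applies, with $\eps=C$ and $R=R'$, to $u$ and to $\tilde u$ separately; subtracting the two identities gives, for a.e. $r>|t|+R_0$,
\[
 \tilde v_+(r,t)-v_+(r,t)=\zeta\int_t^\infty (t'-t+r)\bigl[|u|^{p-1}u-|\tilde u|^{p-1}\tilde u\bigr](t'-t+r,t')\,dt',
\]
together with the analogous identity for $\tilde v_--v_-$. Fix $(r,t)$ with $r>\max\{|t|+R_0,R_1\}$ for some $R_1\ge R'$. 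Along the characteristic ray $r'=t'-t+r$ one checks that $r'>\max\{|t'|+R_0,R_1\}$, so on it both $|u|+|\tilde u|\le C/r'$ and $|w-\tilde w|\le h(R_1)$; using $\bigl||a|^{p-1}a-|b|^{p-1}b\bigr|\le p(|a|+|b|)^{p-1}|a-b|$ together with $|u-\tilde u|=|w-\tilde w|/r'$, the integrand is at most $p\,C^{p-1}h(R_1)(r')^{1-p}$, which is integrable because $p>2$, and hence
\[
 |v_+(r,t)-\tilde v_+(r,t)|+|v_-(r,t)-\tilde v_-(r,t)|\lesssim_p C^{p-1}h(R_1)\,r^{2-p}
\]
for a.e. such $(r,t)$. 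Since $w_r-\tilde w_r=\tfrac12\bigl[(v_--\tilde v_-)-(v_+-\tilde v_+)\bigr]$, and $w-\tilde w$ is continuous and tends to $0$ as $r\to\infty$ by the second hypothesis, I would integrate this bound in $r$ (now using $2-p<-1$) to obtain
\[
 |w(r,t)-\tilde w(r,t)|\lesssim_p C^{p-1}h(R_1)\,r^{3-p},\qquad r>\max\{|t|+R_0,R_1\},
\]
and then, taking the supremum over this region and using $p>3$ so that $r^{3-p}\le R_1^{3-p}$ there, I would reach $h(R_1)\le C_*(p)\,C^{p-1}R_1^{3-p}\,h(R_1)$.

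To close the first stage I would choose $R_1\ge R'$ large enough that $C_*(p)\,C^{p-1}R_1^{3-p}<1$; since $h(R_1)$ is finite, this forces $h(R_1)=0$, i.e.\ $w\equiv\tilde w$ and hence $u\equiv\tilde u$ on $\{r>\max\{|t|+R_0,R_1\}\}$, a region that contains $\{r>|t|+R_1\}$ because $R_1>R_0$. The second stage is then a single application of Lemma~\ref{match further} as described above, which yields $u(r,t)=\tilde u(r,t)$ for all $r>|t|+R_0$.

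The step I expect to be the main obstacle is making the estimate self-improving: the content lies not in any one inequality but in arranging that $h$ appears on both sides of the final bound at the same threshold $R_1$, which forces one to verify carefully that the characteristic rays in the Duhamel identity never leave the region where the decay $|u|+|\tilde u|\le C/r$ is available, and then to extract from that decay one more power of $r$ than Lemma~\ref{characteristic lines plus non radiative} alone provides --- it is precisely this extra power, together with $p>3$, that produces the contracting factor $R_1^{3-p}\to 0$. The remaining points (the convergence justifying the passage $t'\to\infty$ in the Duhamel identity, and the continuity and integrability needed to differentiate and re-integrate $w-\tilde w$) are handled exactly as in the proof of Lemma~\ref{match further}.
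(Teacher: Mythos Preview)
Your proof is correct and uses the same ingredients as the paper's --- the Duhamel identity for $v_\pm-\tilde v_\pm$ from Lemma~\ref{characteristic lines plus non radiative}, the $C/r$ decay along characteristics, the gained factor $r^{3-p}$ after integration, and Lemma~\ref{match further} to propagate the coincidence inward --- but you organize the contraction step more efficiently. The paper runs an induction on the decay exponent of $|u-\tilde u|$: starting from $|u-\tilde u|\le C r^{-1}$ it derives $|u-\tilde u|\le \frac{p}{(p-2)(p-3)}C^{p}r^{-(p-2)}$, which after possibly enlarging $R'$ gives $|u-\tilde u|\le C r^{-1-(p-3)/2}$, and then iterates to $|u-\tilde u|\le C r^{-1-n(p-3)/2}$ for every $n$, forcing $u\equiv\tilde u$ on $\{r>\max\{|t|+R_0,R'\}\}$. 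You collapse this entire induction into a single self-referential inequality $h(R_1)\le C_*(p)\,C^{p-1}R_1^{3-p}\,h(R_1)$ on the supremum of $|w-\tilde w|$, which is cleaner and avoids tracking the constants through the iteration. One small point you should add: the statement allows $R_0=0$, whereas both Lemma~\ref{characteristic lines plus non radiative} and Lemma~\ref{match further} require $R_0>0$; the paper handles this at the outset by first proving the identity on $\{r>|t|+R\}$ for each small $R>0$ and then letting $R\to 0^+$.
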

 \begin{proof}
  Without loss of generality we assume $R_0>0$. Otherwise we first prove the identity for $r>|t|+R$ with small positive numbers $R>0$ and then let $R\rightarrow 0^+$. We first apply Lemma \ref{characteristic lines plus non radiative} on both $u$ and $\tilde{u}$ to obtain ($v_\pm, w$ are defined as in the proof of Lemma \ref{characteristic lines plus non radiative}, $\tilde{v}_\pm, \tilde{w}$ are derived from $\tilde{u}$ in the same manner)
  \begin{equation}
    v_+(r,t) - \tilde{v}_+(r,t) = \zeta \int_{t}^\infty (t'-t+r)\left[|\tilde{u}|^{p-1} \tilde{u}(t'-t+r,t') - |u|^{p-1} u(t'-t+r,t')\right] dt' \label{difference of v and tilde}
  \end{equation}
  Now let us assume $|\tilde{u}(r,t)-u(r,t)|\leq C r^{-\beta}$ for some constant $\beta \geq 1$ and all $r>\max\{R_0+|t|, R'\}$. (This holds for $\beta =1$) Then we immediately have
  \begin{align*}
    \left|v_+(r,t) - \tilde{v}_+(r,t)\right| &\leq \int_t^\infty (t'-t+r) \cdot p [C(t'-t+r)^{-1}]^{p-1} |u(t'-t+r,t')-\tilde{u}(t'-t+r,t')| dt'\\
    & \leq \frac{p}{p-3+\beta} C^p r^{-(p-3+\beta)}. 
  \end{align*}
  Similarly we may prove an inequality regarding $v_-$ and $\tilde{v}_-$. By $v_\pm = w_t \mp w_r$ and $\tilde{v}_\pm = \tilde{w}_t \mp \tilde{w}_r$ we have
 \[
  |w_r (r,t)-\tilde{w}_r(r,t)| \leq \frac{p}{p-3+\beta} C^p r^{-(p-3+\beta)}, \;\forall \, a.e.\, r>\max\{R_0+|t|,R'\}
 \]
 Combining this with our assumption on the limit of $w-\tilde{w}$ as $r\rightarrow \infty$, we have
 \begin{align*}
  |w(r,t) - \tilde{w}(r,t)| &\leq \frac{p}{(p-3+\beta)(p-4+\beta)} C^p r^{-(p-4+\beta)}, & &r>\max\{R_0+|t|,R'\};\\
  |u(r,t) - \tilde{u}(r,t)| &\leq \frac{p}{(p-2)(p-3)} C^p r^{-(p-3+\beta)}, & &r>\max\{R_0+|t|,R'\}.
 \end{align*}
 Without loss of generality we may assume (otherwise we may enlarge $R'$)
 \begin{align*}
  \frac{p}{(p-2)(p-3)} C^{p-1} (R')^{-\frac{p-3}{2}} < 1.
 \end{align*}
 Thus we have $|u(r,t) - \tilde{u}(r,t)| \leq C r^{-\frac{p-3}{2}-\beta}$ if $r>\max\{R_0+|t|,R'\}$. By induction we have $|u(r,t) - \tilde{u}(r,t)| \leq C r^{-\frac{p-3}{2}n-\beta}$ for all $n=0,1,2,\cdots$ and $r>\max\{R', |t|+R_0\}$. This means $u(r,t) \equiv \tilde{u}(r,t)$ for all $r>\max\{R', |t|+R_0\}$. Finally we apply Lemma \ref{match further} to conclude that $u(r,t) \equiv \tilde{u}(r,t)$ for all $r>|t|+R_0$.
 \end{proof}
 
 \subsection{Proof of Theorem \ref{main 2}}
 
 Now let us assume that $u$ is a radial, $R_0$-weakly non-radiative solution to (CP1) with initial data $(u_0,u_1) \in \mathcal{H}_{R_0}^{s_p}$. By lemma \ref{initial decay}, given $\eps >0$, there exists a large radius $R_*=R_*(u,\eps)$, so that $|u(r,t)| \leq \eps r^{-2/(p-1)}$ holds for all $r>\max\{|t|+R_0,R_1\}$. This enable us to apply Proposition \ref{asymptotic behaviour for match} and obtain two constants $C\in \Rm$, $R'>1$ so that
 \begin{align*}
  |u(r,t)-C/r| \lesssim r^{2-p},\quad \hbox{if}\; r> \max\{|t|+R_0,R'\}.
 \end{align*} 
 We claim that the constant $C$ also satisfies $|C|^{\frac{p-1}{p-3}} R_- < R_0$ in the defocusing case. If this were false, i.e. $R_{C}^- \doteq |C|^{\frac{p-1}{p-3}} R_- \geq R_0 > 0$, then we might apply Proposition \ref{match two non radiative} on $u$ and $U_{C}^-$ in the region $\{(x,t): |x|>|t|+ R_C^- +\eps\}$ with an arbitrary $\eps>0$. We obtain $u(x,t) = U_C^- (x)$ if $|x|>|t|+ R_C^- +\eps$. By making $\eps \rightarrow 0^+$ we have $u(x,t) = U_C^- (x)$ for all $(x,t)$ with $|x|>|t|+R_C^-$. Thus $u_0(x) = u(x,0) = U_C^-(x)$ blows up when $|x| \rightarrow (R_C^-)^+$. This gives a contradiction, thanks to Lemma \ref{radial estimate}. Finally we are able to apply Proposition \ref{match two non radiative} on $u$ and $U_{C}^\pm$ in the region $\{(x,t): |x|>|t|+ R_0\}$ to conclude that $u(x,t) = U_C^\pm (x)$ whenever $|x|>t+R_0$ and finish the proof. 
  
 \section{Appendix A: Ordinary Differential Equations} \label{sec: ode1}
In this section we consider the ordinary differential equation 
\begin{equation}
  \left\{\begin{array}{ll} (1-x^2) f''(x) - 2 \beta x f'(x) + \gamma f(x) + |f(x)|^{p-1} f(x) = 0, & x \in (-1,1);\\
  f(0) = 0,\; f'(0) = a; & \end{array} \right. \label{ode detail}
\end{equation}
and prove Proposition \ref{ode}. If we use the notation $\lesssim$ in the proof below, then the implicit constant depends on $\beta, \gamma, p$ unless specified otherwise. 

\subsection{Global existence} 
 Classic theory of ordinary differential equations guarantees that the solution $f$ is $C^2$ and defined in a maximal interval $(-\delta, \delta)$ for some $\delta \in (0,1]$. In order to verify $\delta = 1$ we only need to show that $f(x)$ and $f'(x)$ are both bounded in the interval $(-\delta, \delta)$ if $\delta < 1$. This can be done by a semi-conservation law. We may multiply both sides of equation \eqref{ode detail} by $(1-x^2)^{2\beta-1}f'(x)$ and obtain
\[
 \frac{d}{dx}\left[\frac{1}{2}(1-x^2)^{2\beta} |f'(x)|^2 + (1-x^2)^{2\beta-1} P(f(x))\right] = -2(2\beta-1)x(1-x^2)^{2\beta -2} P(f(x)). 
\]
Here the potential $P$ is defined by
\[
 P(y) = \frac{\gamma}{2}|y|^2 + \frac{1}{p+1}|y|^{p+1} \geq 0. 
\]
The derivative above is nonpositive if $x>0$ and nonnegative if $x<0$. Thus we always have
\begin{equation}
  \frac{1}{2}(1-x^2)^{2\beta} |f'(x)|^2 + (1-x^2)^{2\beta-1} P(f(x)) \leq \frac{a^2}{2}. \label{semi energy}
\end{equation}
This immediately gives the boundedness of $f'(x)$ and $f(x)$, as well as the continuous dependence of $f(x)$ and $f'(x)$ on parameter $a$, as long as $x$ is away from the endpoints $\pm 1$. Before we conclude this subsection, we also give another semi-conservation law for future use. We may multiply the original equation by $f'(x)$ and obtain 
\[
 \frac{d}{dx}\left[\frac{1}{2}(1-x^2)|f'(x)|^2 + P(f(x))\right] = (2\beta-1)x |f'(x)|^2. 
\]
Therefore we can find a lower bound regarding $f(x)$ and $f'(x)$. 
\begin{equation} \label{semi energy lower}
 \frac{1}{2}(1-x^2)|f'(x)|^2 + P(f(x)) \geq \frac{a^2}{2}, \quad \forall x\in (-1,1). 
\end{equation}
\subsection{Continuity of $f(x)$ at the endpoints} 
Now let us consider the behaviour of $f(x)$ when $x \rightarrow 1^-$. The behaviour of $f(x)$ when $x\rightarrow -1^+$ is similar because $f$ is an odd function. First of all, the inequality \eqref{semi energy} implies 
\begin{align*}
 |f'(x)| \leq |a| (1-x)^{-\beta}, \quad x>0.
\end{align*}
Since $\beta<1$, we know the limit $\displaystyle f(1) \doteq \lim_{x\rightarrow 1^-} f(x)$ is well-defined. In addition, if we fix $x_0 \in (0,1)$, then we have
\begin{itemize}
 \item The function $f$ is a continuous function of $(x,a) \in [0,x_0]\times \Rm$. 
 \item The upper bound of $f'(x)$ given above also implies
\begin{equation} \label{variation near 1}
 \sup_{x_1, x_2\in [x_0,1]}|f(x_1)-f(x_2)| \leq \int_{x_0}^1 |f'(y)| dy \lesssim_\beta |a|(1-x_0)^{1-\beta}.
\end{equation}
\end{itemize}
These immediately give the continuity of $f$ on $(x,a) \in [0,1]\times \Rm$. We may also combine \eqref{semi energy} and \eqref{variation near 1} (with $x_0 = 1/2$) to give an upper bound
\begin{equation}
 \max_{x\in [0,1]} |f(x)| \lesssim |a|. \label{uniform upper bound f}
\end{equation}
\subsection{Asymptotic behaviour of $f'(x)$ at endpoints}
In order to investigate the asymptotic behaviour of $f'(x)$ as $x\rightarrow 1^-$, we calculate (We always assume $x\geq 0$ in this subsection, the property of $f(x)$ for negative $x$ can be obtained by symmetry)
\begin{equation}
 \frac{d}{dx} \left[(1-x^2)^\beta f'(x)\right] = (1-x^2)^{\beta-1} \left[(1-x^2) f''(x) - 2\beta x f'(x)\right] = - (1-x^2)^{\beta-1} P'(f(x)). \label{G0}
\end{equation}
Here we use the equation \eqref{ode detail} again. Since $\left|(1-x^2)^{\beta-1} P'(f(x))\right| \lesssim (1-x)^{\beta-1} P'(|a|)$ is integrable in $[0,1]$, we have a well-defined limit $\displaystyle G \doteq \lim_{x \rightarrow 1^-} (1-x^2)^\beta f'(x)$ with 
\begin{align}
 G - (1-x^2)^\beta f'(x) & = -\int_x^1(1-y^2)^{\beta-1} P'(f(y)) dy; \label{G01}\\
  \left|G - (1-x^2)^\beta f'(x)\right| & \lesssim \int_x^1 (1-y)^{\beta-1} P'(|a|) dy \lesssim (1-x)^{\beta} P'(|a|); \label{variation of beta f prime}\\
  \left|f'(x) - (1-x^2)^{-\beta} G\right| & \leq C_0 P'(|a|), \quad x\in [0,1). \label{variation of f prime}
\end{align}
We may combine \eqref{variation of beta f prime} with the fact that $f'(x_0)$ depends continuously on parameter $a$ for a fixed $x_0 \in (0,1)$ to conclude that $G$ is a continuous function of $a$. Now let us have a more careful look at the asymptotic behaviour of $f'(x)$ near $1$. According to \eqref{variation near 1} and \eqref{uniform upper bound f}, we have
\begin{align*}
 |P'(f(x))-P'(f(1))| \lesssim P''(|a|) |f(x)-f(1)| \lesssim P''(|a|)|a|(1-x)^{1-\beta} \lesssim P'(|a|) (1-x)^{1-\beta}.
\end{align*}
We combine this with \eqref{G01} and obtain
\begin{align*}
 \left|G- (1-x^2)^\beta f'(x) + \int_x^1 (1-y^2)^{\beta-1}P'(f(1)) dy\right|& \leq \int_x^1 (1-y^2)^{\beta-1}\left|P'(f(y)-P'(f(1))\right| dy\\
 & \lesssim  \int_x^1 (1-y^2)^{\beta-1}(1-y)^{1-\beta} P'(|a|) dy\\
 & \lesssim P'(|a|)(1-x).
\end{align*}
Thus we have
\begin{equation} \label{asymptotic f prime}
 \left|f'(x) - G(1-x^2)^{-\beta} - \frac{1}{2\beta} P'(f(1))\right| \lesssim P'(|a|) (1-x)^{1-\beta}. 
\end{equation}
Finally we claim that $G$ and $f(1)$ can never be zero at the same time unless $a=0$. In fact, if $G = f(1) = 0$, the estimate above implies that $f'(x) \rightarrow 0$ as $x \rightarrow 1^-$.  Our semi-conservation law \eqref{semi energy lower} then gives $a=0$. 
\subsection{Extreme values of $f$}
We prove part (iv) of Proposition \ref{ode} by considering the extreme values of $f$ on (0,1). We have
\begin{proposition} \label{extreme values}
 The solution $f$ to ordinary differential equation \eqref{ode detail} satisfies 
 \begin{itemize}
  \item [(a)] Given $a>0$, there are finitely many points $x \in (0,1)$ so that $f'(x) = 0$. All of these are local maxima or minima. We use the notation $N(a)$ for the number of local extreme points.
  \item [(b)] Let $a = a_0$ be a positive parameter so that $\displaystyle G \doteq \lim_{x\rightarrow 1^-} (1-x^2)^\beta f'(x) \neq 0$. Then $N(a)$ is a constant in a small neighbourhood of $a_0$.  
  \item [(c)] When $a>0$ is large, we have a lower bound $N(a) \gtrsim a^{\frac{p-1}{p+1}}$. 
 \end{itemize}
\end{proposition}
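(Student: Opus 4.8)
The plan is to establish the three parts in turn, with part (c) the substantial one. For (a), note that at any $x_0 \in (0,1)$ with $f'(x_0)=0$, equation \eqref{ode detail} reduces to $(1-x_0^2) f''(x_0) = -P'(f(x_0))$, where $P'(y) = \gamma y + |y|^{p-1} y$; since $f(x_0)=f'(x_0)=0$ would force $f \equiv 0$ by uniqueness for \eqref{ode detail}, contradicting $a>0$, we have $f(x_0) \neq 0$, hence $f''(x_0) \neq 0$ with sign opposite to $f(x_0)$. Thus every critical point is a non-degenerate strict local extremum, hence isolated, and the set of critical points has no accumulation point in $[0,1]$: an accumulation point $x^\ast \in (0,1)$ would satisfy $f'(x^\ast)=0$ and, as a limit of zeros of $f'$, also $f''(x^\ast)=0$, contradicting non-degeneracy; $x^\ast=0$ is excluded because $f'(0)=a\neq 0$; and $x^\ast=1$ is excluded because, by \eqref{asymptotic f prime} and part (iii) of Proposition \ref{ode} (when $a>0$, $G$ and $f(1)$ are not both zero), $f'(x)$ stays bounded away from $0$ as $x\to 1^-$, blowing up like $G(1-x^2)^{-\beta}$ if $G\neq 0$ and converging to $\tfrac{1}{2\beta}P'(f(1))\neq 0$ if $G=0$. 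A subset of the compact set $[0,1]$ with no accumulation point is finite, proving (a).

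For (b), continuity of $a\mapsto G$ together with $G(a_0)\neq 0$ and estimate \eqref{variation of f prime} produce a compact neighbourhood $U \ni a_0$ and a point $x_1<1$ with $|f'(x)| \geq \tfrac12 |G(a_0)|(1-x^2)^{-\beta} - C_0\sup_{a\in U} P'(|a|) > 0$ for all $a\in U$ and $x\in[x_1,1)$; hence no critical point lies in $[x_1,1)$ for $a\in U$, and the $N(a_0)$ critical points of $f(\cdot,a_0)$ all lie in $(0,x_1)$. Each being non-degenerate, the implicit function theorem shows each persists, for $a$ near $a_0$, as exactly one nearby critical point depending continuously on $a$, while on the complement of small neighbourhoods of these points — a compact subset of $[0,x_1]$ — $|f'(\cdot,a_0)|$ is bounded below, so $|f'(\cdot,a)|$ is too for $a$ close, by continuous dependence of solutions of \eqref{ode detail} on the parameter. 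Therefore $N(a) = N(a_0)$ for $a$ in a neighbourhood of $a_0$.

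For (c), the core of the proposition, I would restrict to the fixed interval $[0,\tfrac12]$, where $1-x^2\in[\tfrac34,1]$. With $E(x) = \tfrac12(1-x^2)|f'(x)|^2 + P(f(x))$, multiplying \eqref{ode detail} by $f'$ gives $E' = (2\beta-1)x|f'|^2 \geq 0$ on $[0,\tfrac12]$, so $E \geq E(0) = a^2/2$ there, while \eqref{semi energy} gives $E \leq Ca^2$. Consequently $|f| \lesssim a^{2/(p+1)}$ and $|f'| \lesssim a$ on $[0,\tfrac12]$, and at any interior maximum point $x_m$ of $|f|$ one has $P(|f(x_m)|) = E(x_m) \sim a^2$, so for $a$ large the extremal value $M := |f(x_m)|$ satisfies $M \sim a^{2/(p+1)}$. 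The key estimate is that each interval between consecutive zeros of $f$ lying inside $[0,\tfrac12]$ has length $\lesssim a^{-(p-1)/(p+1)}$. I would prove it by splitting such an interval at the levels $|f| = M/K$, with $K = K(\beta,\gamma,p)$ large enough that $P(M/K) \leq a^2/8$: on sub-arcs where $|f| \leq M/K$, the identity $|f'|^2 = 2(E-P(f))/(1-x^2) \geq 3a^2/4$ forces $|f'| \geq a/2$, so they are crossed in time $\lesssim (M/K)/a \lesssim a^{-(p-1)/(p+1)}$; on sub-arcs where $|f| \geq M/K$, for $a$ large the nonlinear term dominates in \eqref{ode detail} — since $P'(M/K) \sim a^{2p/(p+1)}$, $|2\beta x f'| \lesssim a$, and $2p/(p+1) > 1$ — giving $\mathrm{sgn}(f)\,f'' \leq -c\,a^{2p/(p+1)}$, so that, as $|f'| \lesssim a$, the derivative $f'$ must vanish (or pass from $0$ to size $O(a)$) within time $\lesssim a^{1-2p/(p+1)} = a^{-(p-1)/(p+1)}$, bounding these sub-arcs as well; summing the at most four monotone sub-arcs gives the length bound. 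A contradiction argument of the same flavour — using that, by the sign of $f''$ at critical points, $f$ cannot turn from decreasing back to increasing at a value of constant sign — shows the terminal arc from the last zero of $f$ in $[0,\tfrac12]$ up to $\tfrac12$ also has length $\lesssim a^{-(p-1)/(p+1)}$. Hence $[0,\tfrac12]$ contains $\gtrsim a^{(p-1)/(p+1)}$ zeros of $f$, and since each gap between consecutive zeros contains a critical point by Rolle's theorem, $N(a) \gtrsim a^{(p-1)/(p+1)}$.

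I expect the main obstacle to be the bookkeeping in (c): choosing the constant $K$ and the threshold on $a$ consistently, propagating the energy and derivative bounds uniformly over all the half-oscillations, and disposing of the harmless but fiddly terminal arc near $x=\tfrac12$, where $f$ may be climbing toward a maximum lying just beyond $\tfrac12$.
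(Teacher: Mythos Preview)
Your arguments for (a) and (b) are essentially the paper's, phrased in slightly different language (implicit function theorem versus explicit $\varepsilon$-boxes), and are correct.

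For (c) your approach is correct but genuinely different from the paper's. You run a phase-plane argument: split each half-oscillation at the level $|f|=M/K$, use the energy lower bound to force $|f'|\gtrsim a$ on the low part and the sign of $f''$ (from the dominant nonlinear term) to bound the high part. The paper instead works on an interval $I\subset(0,\tfrac12)$ containing \emph{no} zero and \emph{no} critical point, so that neither $f$ nor $f'$ changes sign on $I$, and then runs a Chebyshev-type estimate:
\[
|I|\le \bigl|\{x\in I:|f'|>a/2\}\bigr|+\bigl|\{x\in I:|f|>a^{2/(p+1)}/2\}\bigr|,
\]
bounding the first measure by $\tfrac{2}{a}\bigl|\int_I f'\bigr|\lesssim a^{-(p-1)/(p+1)}$ and the second by $a^{-2p/(p+1)}\bigl|\int_I (1-x^2)^{\beta-1}P'(f)\bigr|$, where the last integral telescopes via the identity $\frac{d}{dx}\bigl[(1-x^2)^\beta f'\bigr]=-(1-x^2)^{\beta-1}P'(f)$ to $\bigl|(1-y_1^2)^\beta f'(y_1)-(1-y_2^2)^\beta f'(y_2)\bigr|\lesssim a$. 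This buys two things over your route: it never needs to argue that the nonlinear term dominates $2\beta x f'$ (no ``$a$ large enough'' threshold beyond what is implicit in $\gtrsim$), and it sidesteps your terminal-arc bookkeeping entirely, since the conclusion ``any subinterval of length $>2Ca^{-(p-1)/(p+1)}$ contains an extreme point'' follows directly. Your method, on the other hand, is more transparently geometric and would adapt more readily to nonlinearities for which no clean integrating factor like $(1-x^2)^\beta$ is available.
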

We temporarily postpone the proof of Proposition \ref{extreme values} and first show why part (iv) of Proposition \ref{ode} is a direct consequence of it. 
\paragraph{Proof of part (iv)} First of all, the approximation formula
\[
 \left|f'(x) - G(1-x^2)^{-\beta} - \frac{1}{2\beta} P'(f(1))\right| \lesssim P'(|a|) (1-|x|)^{1-\beta}
\]
given in part (iii) implies that $\displaystyle \sup_{x \in (-1,1)} |f'(x)|<+\infty$ is equivalent to $G=0$. If there were only finite number of $a$'s so that $G=0$, then $N(a)$ would be a constant for all sufficiently large $a>0$, by part (b) of Proposition \ref{extreme values}. However, this contradicts with part (c). 
\begin{proof}[Proof of Proposition \ref{extreme values}]
 Let us start by part (a). If $f'(x) = 0$ for some $x \in (0,1)$, then $f''(x) \neq 0$. Otherwise we have $f(x)=0$ thus $f\equiv 0$. Thus $x$ must be either a maximum, if $f''(x)<0$, or a minimum, if $f''(x)>0$. This also implies that all these extreme points are isolated from each other. Thus it suffices to show these points can not accumulate around the endpoints $0,1$. The case of $x=0$ is trivial since $f \in C^2$ and we have assumed $f'(0) = a >0$. The case $x=1$ can be dealt with by the approximation formula of $f'(x)$ near $x=1$ given in \eqref{asymptotic f prime}. Please note that at least one of $G$ and $f(1)$ is nonzero. Now let us prove part (b). Let $x_1<x_2<\cdots<x_n$ be all extreme points of $f$ in $(0,1)$ when $a=a_0$. We can always choose a sufficiently small positive constant $\eps$ so that
 \[
  0 < x_1-\eps < x_1+\eps < x_2-\eps < x_2+\eps < \cdots < x_n-\eps < x_n +\eps < 1-\eps < 1
 \]
 satisfy
 \begin{align*}
  &\inf_{x\in [0,x_1-\eps]} f'(x) > 0;& &\inf_{x\in [x_i+\eps, x_{i+1}-\eps]} |f'(x)|>0, \; i=1,2,\cdots, n-1;& &\inf_{x\in [x_n+\eps, 1-\eps]} |f'(x)|>0;&
 \end{align*}
 \begin{align*}
  &\inf_{x \in [x_i-\eps,x_i+\eps]} |f''(x)|>0,\; f'(x_i\!-\!\eps) f'(x_i\!+\!\eps) < 0, \; i=1,\cdots,n;& &[1\!-\!(1\!-\!\eps)^2]^{-\beta} |G| > C_0 P'(|a|).&
 \end{align*}
 Here the constant $C_0=C_0(\beta,\gamma,p)$ is the one in \eqref{variation of f prime}. The final inequality above guarantees that $f'(x) \neq 0$ for all $x \in [1-\eps,1)$. The continuous dependence of $f(x), f'(x), f''(x)$ (away from $x =\pm 1$) and $G$ on parameter $a$ then guarantees that all the inequalities above also hold for parameters $a$ in a small neighbourhood of $a_0$. It immediately follows that there is exactly one extreme point in each interval $[x_i-\eps,x_i+\eps]$ but none elsewhere. Finally we prove part (c). We consider an interval $I = [y_1,y_2]\subseteq (0,1/2)$ which does not contain an extreme point or zero of $f(x)$. According to the semi-conservation law \eqref{semi energy lower}, either $|f'(x)|> |a|/2$ or $|f(x)|> |a|^{2/(p+1)}/2$ holds for any $x\in (-1,1)$. Thus 
\begin{equation}
 |I| \leq \left|\{x\in I: |f'(x)|>a/2\}\right| + \left|\{x\in I: |f(x)|>a^{2/(p+1)}/2\}\right|. \label{upper bound of length}
\end{equation}
Next we observe the following facts 
\begin{itemize} 
 \item None of $f(x)$ and $f'(x)$ may change its sign in $I$ by our assumption on the interval $I$;
 \item The inequalities $|f(x)| \lesssim a^{2/(p+1)}$ and $|f'(x)| \lesssim a$ hold for all $x \in [0,1/2]$ by semi-conservation law \eqref{semi energy}.
\end{itemize} 
These help to give upper bounds of the terms in the right hand side of \eqref{upper bound of length}: 
 \begin{align*}
  \left|\{x\in I: |f'(x)|>a/2\}\right| \leq \frac{2}{a} \left|\int_{y_1}^{y_2} f'(x) dx \right| = \frac{2}{a} \left|f(y_1)-f(y_2)\right| \lesssim a^{-\frac{p-1}{p+1}};
 \end{align*}
 and (we utilize \eqref{G0} below)
 \begin{align*}
   \left|\{x\in I: |f(x)|>a^{2/(p+1)}/2\}\right| & \leq \left|\{x\in I : (1-x^2)^{\beta-1} |P'(f(x))| > a^{2p/(p+1)}/2^p \}\right| \\
   & \lesssim a^{-\frac{2p}{p+1}} \left|\int_{y_1}^{y_2} (1-x^2)^{\beta-1} P'(f(x)) dx \right|\\
   & = a^{-\frac{2p}{p+1}} \left|(1-y_1^2)^\beta f'(y_1) - (1-y_2^2)^\beta f'(y_2)\right|\\
   & \lesssim a^{-\frac{p-1}{p+1}}.
 \end{align*}
 In summary we have $|I| \lesssim a^{-\frac{p-1}{p+1}}$, i.e. there is a constant $C=C(\beta,\gamma,p)$, so that $|I| \leq C a^{-\frac{p-1}{p+1}}$. Because there is at least one extreme point between any two zeros of $f$, it immediately follows that any interval $I \subset (0,1/2)$ with $|I| > 2C a^{-\frac{p-1}{p+1}}$ must contain at least an extreme point. This finishes the proof.
\end{proof}

\section{Appendix B: Elliptic Equation} \label{sec: stationary defocusing}

In this section we consider the elliptic equation $-\Delta U = \zeta |U|^{p-1} U$. This gives stationary solutions to (CP1). The case with $\zeta = +1$ has been discussed in the author's previous work \cite{shen2}. We still need to deal with the case $\zeta = -1$ and prove Proposition \ref{stationary defocusing}. We will follow roughly the same argument as in the case $\zeta = +1$, thus we omit some details of proof and focus on the difference of these two cases. 
\paragraph{Transformation to one-dimensional case} We define $z(|x|) = |x| U(x)$ and consider the equation $z(r)$ satisfies 
\[
 z''(r) = \frac{|z(r)|^{p-1} z(r)}{r^{p-1}}, \qquad r>0.
\] 
The existence of $z$ near infinity (i.e. for $r\in [R,\infty)$ with a large $R>0$) with prescribed asymptotic behaviour $z(+\infty) = 1$ and $z'(+\infty) = 0$ then follows a fixed point argument. We then solve $z(r)$ backward by a standard ODE theory. This argument is exactly the same as in the focusing case. Please see Section 9 of \cite{shen2} for details. The only difference is that $z(r)$ can no longer be defined for all $r>0$ but blows up at $r = R_->0$ in the current setting. We will discuss this blow-up phenomenon in details. 

\paragraph{Monotonicity} When $r$ is sufficiently large, we know $z(r)>1$, $z'(r)<0$, $z''(r)>0$. A continuity argument then verifies that all these inequalities still hold in the whole lifespan of $z$. Thus $z(r)$ is either defined for all $r>0$ or blows up to $+\infty$ at some point $r=R_-$. We will show that the former can never happen. 

\paragraph{Iteration of lower bounds} Because $z(r)>1$, we have $z''(r) = |z(r)|^{p-1} z(r) / r^{p-1} \geq r^{-(p-1)}$. We may integrate, use $z'(+\infty) = 0$ and obtain 
\[
 z'(r) \leq - \frac{1}{p-2} r^{-(p-2)}. 
\]
We integrate again, use $z(+\infty) = 1$ and obtain 
\[
 z(r) \geq \frac{1}{(p-2)(p-3)} r^{-(p-3)} + 1 \geq \frac{1}{(p-2)(p-3)} r^{-(p-3)}. 
\]
We may iterate this argument and obtain a family of lower bounds 
\[
 z(r) \geq \frac{r^{-\beta_k}}{c_k}, \quad k=0,1,2,\cdots
\]
Here the coefficients are defined by induction 
\begin{align*}
 &\beta_{k+1} = p \beta_k + (p-3),& &c_{k+1} = (p\beta_k+p-3)(p\beta_k+p-2)c_k^p, & &(\beta_0, c_0)= (0,1).&
\end{align*}
We may give an explicit formula $\beta_k = \frac{(p-3)(p^k-1)}{p-1}$. We also have
\begin{align*}
 \ln c_{k+1} = p \ln c_k + \ln (p \beta_k +p-3) + \ln(p \beta_k + p-2) & \leq p \ln c_k + 2(k+1)\ln p\\
\Rightarrow \ln c_{k+1} + (k+3)\ln p & \leq p \left[\ln c_k + (k+2)\ln p\right] \\
\Rightarrow \ln c_{k} + (k+2) \ln p & \leq 2 p^k \ln p. 
\end{align*}
Thus we have 
\[
 \ln z(r) \geq \beta_k \ln (1/r) - \ln c_k \geq \left(\frac{p-3}{p-1}\ln \frac{1}{r} - 2\ln p\right) (p^k-1) -2\ln p, \quad \forall k=0,1,2,\cdots
\]
This implies that $z(r)$ can not be defined for $r< p^{-2(p-1)/(p-3)}$ otherwise the inequality above fails when $k\rightarrow +\infty$.

\end{document}